\theoremstyle{plain} 
\newtheorem{theorem}{\indent\sc Theorem}[section]
\newtheorem{lemma}[theorem]{\indent\sc Lemma}
\newtheorem{corollary}[theorem]{\indent\sc Corollary}
\newtheorem{proposition}[theorem]{\indent\sc Proposition}
\newtheorem{claim}[theorem]{\indent\sc Claim}
\theoremstyle{definition} 
\newtheorem{definition}[theorem]{\indent\sc Definition}
\newtheorem{remark}[theorem]{\indent\sc Remark}
\newtheorem{example}[theorem]{\indent\sc Example}
\newtheorem{conjecture}[theorem]{\indent\sc Conjecture}
\newtheorem{question}[theorem]{\indent\sc Question}
\numberwithin{equation}{section}
\newcommand{\R}{\mathbb{R}}
\newcommand{\Z}{\mathbb{Z}}
\newcommand{\N}{\mathbb{N}}
\def\dim{\mathop{\mathrm{dim}}\nolimits}
\newcommand{\abs}[1]{\left\lvert#1\right\rvert}
\DeclareMathOperator{\dist}{\mathsf{dist}}
\newcommand{\calp}{\mathcal{P}}
\newcommand{\calh}{\mathcal{H}}
\newcommand{\Rp}{\R_{\geq 0}}
\newcommand{\ds}[1]{d(#1)}
\newcommand{\starNb}{\mathsf{star}}
\newcommand{\spike}{\mathsf{Spike}}
\crefname{theorem}{Theorem}{Theorems}
\crefname{lemma}{Lemma}{Lemmae}
\crefname{corollary}{Corollary}{Corollaries}
\crefname{proposition}{Proposition}{Propositions}
\crefname{definition}{Definition}{Definition}
\crefname{remark}{Remark}{Remarks}
\crefname{example}{Example}{Examples}
\crefname{section}{Section}{Sections}
\begin{document}

\title[A Combination Theorem for Geodesic Coarsely Convex Group Pairs]{A Combination Theorem for Geodesic Coarsely Convex Group Pairs} 

\author[Tomohiro Fukaya]{Tomohiro Fukaya} 

\author[Eduardo Mart\'inez-Pedroza]{Eduardo Mart\'inez-Pedroza}

\author[Takumi Matsuka]{Takumi Matsuka} 



\renewcommand{\thefootnote}{\fnsymbol{footnote}}
\footnote[0]{2020\textit{ Mathematics Subject Classification}.
Primary 20F65; Secondary 20F67, 57M07, 51F30, 58J22.}

\keywords{ 
Combination theorems, group pairs, coarsely convex spaces, the coarse Baum-Connes conjecture, relative geometric action, relative Schwarz-Milnor lemma.
}
\thanks{ 
}




\address{Tomohiro Fukaya \endgraf
Department of Mathematical Sciences \endgraf
Tokyo Metropolitan University \endgraf
Minami-osawa Hachioji Tokyo 192-0397 \endgraf
Japan
}
\email{tmhr@tmu.ac.jp}

\address{Eduardo Mart\'inez-Pedroza \endgraf
Department of Mathematics and Statistics \endgraf
Memorial University of Newfoundland \endgraf
St. John's, NL \endgraf
Canada
}
\email{emartinezped@mun.ca}

\address{Takumi Matsuka \endgraf
Department of Mathematical Sciences \endgraf
Tokyo Metropolitan University \endgraf
Minami-osawa Hachioji Tokyo 192-0397 \endgraf
Japan
}
\email{takumi.matsuka1@gmail.com}

\begin{abstract}
The first author and Oguni introduced a class of groups of non-positive
curvature, called coarsely convex group.  The recent success of the
theory of groups which are hyperbolic relative to a collection of
subgroups has motivated the study of other properties of groups from the
relative perspective.
In this article, we propose definitions for the notions of weakly
semihyperbolic, semihyperbolic, and coarsely convex group pairs
extending the corresponding notions in the non-relative case.  The main
result of this article is the following combination theorem.  Let
$\mathcal{A}_{wsh}$, $\mathcal{A}_{sh}$, and $\mathcal{A}_{gcc}$ denote
the classes of group pairs that are weakly semihyperbolic,
semihyperbolic, and geodesic coarsely convex respectively.  Let
$\mathcal A$ be one of the classes $\mathcal{A}_{wsh}$,
$\mathcal{A}_{sh}$, and $\mathcal{A}_{gcc}$.  Let $G$ be a group that
splits as a finite graph of groups such that each vertex group $G_v$ is
assigned a finite collection of subgroups $\mathcal{H}_v$, and each edge
group $G_e$ is conjugate to a subgroup of some $H\in \mathcal{H}_v$ if
$e$ is adjacent to $v$.  Then there is a non-trivial finite collection
of subgroups $\mathcal{H}$ of $G$ satisfying the following properties.
If each $(G_v, \mathcal{H}_v)$ is in $\mathcal{A} $, then
$(G,\mathcal{H})$ is in $\mathcal{A}$.  The main results of the article
are combination theorems generalizing results of Alonso and
Bridson~\cite{AB95}; and Fukaya and Matsuka~\cite{FuMa23}.
\end{abstract}

\maketitle

\tableofcontents


\section{Introduction}
The existence of combings on Cayley graphs of finitely generated groups with particular properties has been one of the central themes in the study of finitely generated groups via geometric methods.  This theme appears with different motivations, for example, in the study of algorithmic properties of groups~\cite{ECHLPT92}, 
in generalizations of the theory of Gromov's hyperbolic groups~\cite{AB95}, or in relation with the interplay of geometry with operator K-theory~\cite{FO20}. From these references, some of the classes of finitely generated groups that appear in this context include automatic groups, semihyperbolic groups, and coarsely convex groups respectively.

The recent success of the theory of groups which are hyperbolic relative to a collection of subgroups~\cite{Gr87, Bo12, Fa99, Os06} has motivated the study of other properties of groups from the relative perspective.  The objects of study in this context are \emph{group pairs} $(G,\calh)$ consisting of a finitely generated group $G$ and a finite collection of subgroups $\calh$. In this article, we propose definitions for the notions of weakly semihyperbolic, semihyperbolic, and coarsely convex group pairs extending the corresponding notions in the non-relative case.  The main results of the article are combination theorems generalizing results of Alonso and Bridson~\cite{AB95}; and  Fukaya and Matsuka~\cite{FuMa23}. 

For a group pair, the analogous of a Cayley graph of a finitely generated group is called a Cayley-Abels graph.

\begin{definition}[Cayley-Abels graph for pairs]
A \emph{Cayley-Abels graph} of the group pair $(G,\calh)$ is a connected cocompact simplicial $G$-graph $\Gamma$ such that: 
\begin{enumerate}
    \item edge $G$-stabilizers are finite,
    
    \item vertex $G$-stabilizers are either finite or conjugates of subgroups in $\calh$,
    
    \item every $H\in \calh$ is the $G$-stabilizer of a vertex of $\Gamma$, and
    
    \item any pair of vertices of $\Gamma$ with the same $G$-stabilizer $H\in\calh$ are in the same $G$-orbit if $H$ is infinite.
\end{enumerate}
 \end{definition}

Given a group pair $(G,\calh)$, a standard example of a Cayley-Abels graph is Farb's coned-off Cayley graph $\hat\Gamma(G,\calh, S)$ where $S$ is a finite generating set of $G$, see~\cite{Fa99}. Since any two Cayley-Abels graphs of a group pair are quasi-isometric~\cite[Theorem H]{ArMP22}, their quasi-isometry properties provide invariants of group pairs. More generally, if a group pair $(G, \mathcal{H})$ acts \emph{geometrically} on a geodesic metric space $X$, then $X$ is quasi-isometric to any coned-off Cayley of the group pair; see Definition~\ref{def:rel-geometric-action} and Proposition~\ref{prop:relative-Schwartz-Milnor}.  

\begin{definition}\label{def:group_pair_is}
 \begin{enumerate}
  \item \label{def:group_pair_w-semihyp}
          A group pair $(G,\mathcal{H})$ is \emph{weakly semihyperbolic} 
          if there is a Cayley--Abels graph which admits a bounded quasi-geodesic bicombing.
  \item \label{def:group_pair_semihyp}
        A group pair $(G,\mathcal{H})$ is a \emph{semihyperbolic} if there is a Cayley--Abels graph which admits a bounded $G$-equivariant quasi-geodesic bicombing.
  \item \label{def:group_pair_cc}
        A group pair $(G,\mathcal{H})$ is a \emph{coarsely convex} 
        if there is a Cayley--Abels graph which admits a coarsely convex bicombing.
  \item \label{def:group_pair_gcc}
        A group pair $(G,\mathcal{H})$ is \emph{geodesic coarsely convex} 
        if $(G,\mathcal{H})$ acts geometrically  on a geodesic coarsely convex space $X$      
        such that
        \begin{enumerate}
        \item for every $H\in\mathcal{H}$ the fixed point set $X^H$ 
        is bounded, 
        and 
        \item for each $x\in X$ the isotropy $G_x$ is either finite 
        or conjugate to some $H\in\mathcal{H}$.
        \end{enumerate}
 \end{enumerate}
\end{definition}

Geodesic coarsely convex pairs are coarsely convex, see
Proposition~\ref{prop:relative-Schwartz-Milnor}. Note that semihyperbolic group pairs
are weakly semihyperbolic, and that coarsely convex group pairs are
weakly semihyperbolic. The fundamental question of passing from a relative property on a pair $(G,\mathcal H)$ to the corresponding absolute property on $G$ is open for our notions, for example:

\begin{question}
\label{question_CC} 
 Let $(G,\mathcal{H})$ be a (geodesic) coarsely convex group pair such that
 $\mathcal{H}$ is a finite family of infinite (geodesic) coarsely convex subgroups.  Is $G$ (geodesic) coarsely convex?
\end{question}

Let $\mathcal{A}_{wsh}$, $\mathcal{A}_{sh}$, $\mathcal{A}_{cc}$ and $\mathcal{A}_{gcc}$ denote the classes of group pairs that are weakly semihyperbolic, semihyperbolic, coarsely convex, and geodesic coarsely convex respectively. The main result of this note is the following combination theorem.

\begin{theorem}\label{thmx:general} Let $\mathcal A$ be one of the classes $\mathcal{A}_{wsh}$, $\mathcal{A}_{sh}$, and $\mathcal{A}_{gcc}$.
Let $G$ be a group that splits as a finite graph of groups such that each vertex group $G_v$ is assigned a finite collection of subgroups $\mathcal{H}_v$, and each edge group $G_e$ is conjugate to a subgroup of some $H\in \mathcal{H}_v$ if $e$ is adjacent to $v$.
 Then there is a finite collection of subgroups $\mathcal{H}$ of $G$ satisfying the following properties.
\begin{enumerate}
    \item If each $(G_v, \mathcal{H}_v)$ is in $\mathcal{A}  $, then $(G,\mathcal{H})$ is in $\mathcal{A}$.
    
    \item For any vertex $v$ and for any $g\in G_v$, the element $g$ is conjugate in $G_v$ to an element of some $Q\in\mathcal{H}_v$ if and only if $g$ is conjugate in $G$ to an element of some $H\in\mathcal{H}$.
\end{enumerate}
\end{theorem}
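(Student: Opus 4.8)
The plan is to build an explicit model space for a suitable pair $(G,\mathcal H)$ out of the Bass--Serre tree of the splitting together with the spaces witnessing that the vertex pairs lie in $\mathcal A$, and then to produce the required bicombing on this model by concatenating the bicombings of the vertex spaces. One could equivalently induct on the number of edges of the graph of groups, reducing to amalgamated products and HNN extensions; I will outline the direct approach.

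\emph{The model and the choice of $\mathcal H$.} Let $T$ be the Bass--Serre tree, so that $G$ acts on $T$ with $T/G$ the underlying graph, vertex stabilisers conjugate to the $G_v$, and edge stabilisers conjugate to the $G_e$. For each vertex $\tilde v$ of $T$ over a vertex $v$, fix, $G$-equivariantly over each orbit, a space $Z_{\tilde v}$ witnessing $(G_v,\mathcal H_v)\in\mathcal A$: a Cayley--Abels graph of $(G_v,\mathcal H_v)$ carrying a bounded, resp.\ bounded $G_v$-equivariant, quasi-geodesic bicombing when $\mathcal A$ is $\mathcal A_{wsh}$ or $\mathcal A_{sh}$; a geodesic coarsely convex space on which $(G_v,\mathcal H_v)$ acts geometrically when $\mathcal A=\mathcal A_{gcc}$ (after replacing this space by a bounded modification so that each $H\in\mathcal H_v$ fixes a point, if necessary). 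For an edge $\tilde e=(\tilde v,\tilde w)$ of $T$, the image of $G_{\tilde e}$ in $G_{\tilde v}$ fixes a point $p_{\tilde e,\tilde v}\in Z_{\tilde v}$ whose stabiliser is a conjugate of some $H\in\mathcal H_{\tilde v}$, and symmetrically for $\tilde w$. Form $\mathcal X$ from $\bigsqcup_{\tilde v} Z_{\tilde v}$ by, for each edge $\tilde e$ of $T$, equivariantly attaching a free $G_{\tilde e}$-set of new vertices, each joined by an edge to $p_{\tilde e,\tilde v}$ and to $p_{\tilde e,\tilde w}$; thus consecutive vertex spaces are joined by bridges of diameter two through which the edge groups act freely. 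Then $G$ acts on $\mathcal X$ properly (new vertices and all edges have finite stabilisers) and cocompactly (since $T/G$ and the $Z_v/G_v$ are compact), and the points of $\mathcal X$ with infinite stabiliser are the peripheral vertices of the $Z_{\tilde v}$, their $G$-stabilisers being the corresponding conjugates of elements of the $\mathcal H_v$. Let $\mathcal H$ consist of representatives of the finitely many $G$-conjugacy classes of these stabilisers; after $G$-equivariantly identifying any two peripheral vertices of $\mathcal X$ with the same infinite stabiliser, one checks that $\mathcal X$ is a Cayley--Abels graph of $(G,\mathcal H)$ and that the action of $(G,\mathcal H)$ on $\mathcal X$ is geometric in the sense of Definition~\ref{def:rel-geometric-action}; when $\mathcal A=\mathcal A_{gcc}$ one further verifies that $\mathcal X^H$ is bounded for each $H\in\mathcal H$, which via the collapse map $\mathcal X\to T$ reduces to boundedness of the sets $Z_v^{H_v}$ and of the fixed subtrees $T^H$.

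\emph{The bicombing.} Give $\mathcal X$ the concatenated bicombing: to join $x\in Z_{\tilde v}$ to $y\in Z_{\tilde w}$, take the geodesic $(\tilde v=\tilde v_0,\dots,\tilde v_n=\tilde w)$ in $T$ and run successively the bicombing of $Z_{\tilde v_0}$ from $x$ to the exit point towards $\tilde v_1$, the first bridge, the bicombing of $Z_{\tilde v_1}$ between the two peripheral vertices it meets, and so on, finishing with the bicombing of $Z_{\tilde v_n}$ to $y$. As the peripheral vertices are cut points separating the pieces of $\mathcal X$, this path progresses monotonically along $[\tilde v,\tilde w]$ in $T$ and is a quasi-geodesic whose constants depend only on those of the vertex bicombings; $G$-equivariance is inherited from the construction and, in the $\mathcal A_{sh}$ case, from equivariance of the vertex bicombings. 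The substantive point is the bounded fellow-traveller property and, in the coarsely convex cases, the coarse convexity inequality: when $x,x'$ and $y,y'$ are respectively close, the tree geodesics $[\tilde v,\tilde w]$ and $[\tilde v',\tilde w']$ differ only near their ends, and one compares the two concatenations piece by piece, using on each vertex space the boundedness, resp.\ coarse convexity, of its bicombing, and using that the bridges have uniformly bounded diameter. This is the relative analogue of the combination theorem for coarsely convex spaces of Fukaya--Matsuka~\cite{FuMa23}, and carrying it out uniformly in the length of the tree geodesic is where most of the work sits.

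\emph{The conjugacy statement and the main obstacle.} Part (2) is Bass--Serre theory. If $g\in G_v$ is conjugate in $G_v$ into some $Q\in\mathcal H_v$, it fixes a peripheral vertex of $Z_{\tilde v}\subseteq\mathcal X$, hence a vertex with infinite stabiliser, hence is conjugate in $G$ into $\mathcal H$. Conversely, if $g\in G_v$ is conjugate in $G$ into some $H\in\mathcal H$, it fixes a peripheral vertex $q$ of $\mathcal X$; this $q$ lies in a unique vertex space $Z_{\tilde u}$, so $g\in G_{\tilde u}$, and since also $g\in G_v=G_{\tilde v}$ for our chosen lift, $g$ fixes the geodesic $[\tilde v,\tilde u]$ in $T$. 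If $\tilde u=\tilde v$ then $g$ fixes a peripheral vertex of $Z_{\tilde v}$ and so is conjugate in $G_v$ into $\mathcal H_v$; if $\tilde u\neq\tilde v$ then $g$ lies in the stabiliser of the first edge of $[\tilde v,\tilde u]$, which by hypothesis is conjugate in $G_v$ into an element of $\mathcal H_v$. I expect the main obstacle to be the bicombing estimates of the previous paragraph --- concatenating the vertex bicombings across the bridges while preserving the bounded fellow-traveller, and for $\mathcal A_{gcc}$ the coarse convexity, inequalities, uniformly in the path length --- together with, in the relative $\mathcal A_{gcc}$ setting, the control needed to keep the peripheral fixed sets $\mathcal X^H$ bounded.
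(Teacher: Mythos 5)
The paper's route is to reduce to graphs of groups with a single edge (Theorems~\ref{thmx:Io} and~\ref{thmx:IIIo}), build the combined space via a $C$-pushout (resp.\ $\varphi$-coalescence) of spiked vertex spaces so that the tree of spaces has \emph{single-point} cut sets, and then apply Theorem~\ref{TS-Main_thm} for the bicombing. Your proposal instead builds a tree of spaces directly from the full Bass--Serre tree, which is a legitimate alternative organization. However, the proposal has a gap in the choice of peripheral structure, and this gap is not incidental: it is precisely the content of the theorem that is missing.

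You set $\mathcal H$ to be a set of representatives of the $G$-conjugacy classes of elements of $\bigcup_v\mathcal H_v$. This is not the peripheral structure of the theorem. In the amalgamated case, the correct $\mathcal H$ replaces $K_1$ and $K_2$ by the single new subgroup $\langle K_1,K_2\rangle\cong K_1\ast_C K_2$ (Theorem~\ref{thmx:Io}), and in the HNN case by $\langle K^t,L\rangle$ (Theorem~\ref{thmx:IIIo}); these combined subgroups are \emph{not} conjugate into any $\mathcal H_v$ and must be created anew. Concretely, they arise as the stabilizer of the single $G$-equivariant cut point that separates adjacent vertex spaces in the paper's construction, and this is what makes the glued space a Cayley--Abels graph (resp.\ have admissible isotropies) and what makes a $G$-equivariant concatenated bicombing possible. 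Your bridge construction does not have a single cut point: the bridge is a \emph{free} $G_{\tilde e}$-orbit of degree-two vertices between $p_{\tilde e,\tilde v}$ and $p_{\tilde e,\tilde w}$. This is fatal in the $\mathcal A_{sh}$ case: both $p_{\tilde e,\tilde v}$ and $p_{\tilde e,\tilde w}$ are fixed by $G_{\tilde e}$, so a $G$-equivariant bicombing path between them would have to be $G_{\tilde e}$-invariant, yet any such path passes through a bridge vertex with trivial $G_{\tilde e}$-stabilizer, and $g.b\neq b$ for $1\neq g\in G_{\tilde e}$. Conversely, if you collapse the bridge to a single $G_{\tilde e}$-fixed vertex, that vertex has infinite stabilizer $G_{\tilde e}$ which is not conjugate into your $\mathcal H$, so either the Cayley--Abels condition fails or you must enlarge $\mathcal H$ in a way that breaks item (2) of the theorem. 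In the $\mathcal A_{gcc}$ case the same omission resurfaces as the boundedness problem for $\mathcal X^H$ that you flag but do not resolve: for $H$ a conjugate of some $\mathcal H_v$ element containing an incident edge group, the set $\mathcal X^H$ is not controlled merely by $Z_v^H$ and $T^H$, whereas the combined subgroup $\langle K_1,K_2\rangle$ has a single fixed point by construction. Your concatenation-of-bicombings idea and the Bass--Serre argument for item (2) are both sound in outline and parallel what the paper does (Theorems~\ref{TS-Main_thm}, \ref{thm:CombinationFine}, \ref{thm:HNNFine}), but they need to be carried out on the paper's space with the paper's $\mathcal H$, not on yours.
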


Observe that Theorem~\ref{thmx:general} is trivial without the second item in the conclusion; indeed, the pair $(G, \{G\})$ belongs to any of the three defined classes  since a Cayley-Abels graph for such group pair is the edgeless $G$-graph with a single vertex.

\begin{question}
 Does Theorem~\ref{thmx:general} holds for $\mathcal{A}_{cc}$?
\end{question}

The proof of the theorem follows the strategy in the work of Bigdely and
Mart\'inez-Pedroza~\cite{BiMa23} where analogous combination theorems
were proved for the class of group pairs $(G,\calp)$ that admit a well
defined relative Dehn function, and the class of group pairs $(G,\calp)$
where $\calp$ is a hyperbolically embedded collection of subgroups.

Theorem~\ref{thmx:general} is proved for  graphs of groups with a single edge, there are three cases to consider corresponding to the results below. Observe that the general case follows by induction on the number of edges of the graph.  

\begin{theorem}[Amalgamated Product]\label{thmx:Io}
Let $\mathcal A$ be one of the classes $\mathcal{A}_{wsh}$, $\mathcal{A}_{sh}$, and $\mathcal{A}_{gcc}$.  For $i\in\{
 1,2\}$, let $(G_i, \mathcal{H}_i\cup\{K_i\})$ be a pair and   $\partial_i\colon C\to K_i$  a group monomorphism. Let $G_1\ast_C G_2$  denote the amalgamated product determined by
$G_1\xleftarrow{\partial_1}C\xrightarrow{\partial_2}G_2$, and let  $\mathcal{H}=\mathcal{H}_1\cup\mathcal{H}_2$. Then:
\begin{enumerate}
     \item \label{item:Ib}  If    $( G_i,\mathcal{H}_i\cup\{K_i\} )$  is in $\mathcal A $   for each $i$, then $(G_1\ast_C G_2, \mathcal{H}  \cup\{\langle K_1,K_2 \rangle\})$  is in $\mathcal A$.  
    
\item For any $g\in G_i$, the element $g$ is conjugate in $G_i$ to an element of some $Q\in\mathcal{H}_i\cup \{K_i\}$ if and only if $g$ is conjugate in $G$ to an element of some $H\in\mathcal{H}\cup \{ \langle K_1, K_2 \rangle\}$.    
\end{enumerate}
\end{theorem}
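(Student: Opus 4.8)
The plan is to build a geometric model for $(G_1\ast_C G_2,\ \mathcal H\cup\{\langle K_1,K_2\rangle\})$ by gluing the given models for the two factors along copies of a space for $\langle K_1,K_2\rangle$, and then to verify that the glued object carries a bicombing of the required flavor. First I would fix, for each $i$, a geometric object witnessing $(G_i,\mathcal H_i\cup\{K_i\})\in\mathcal A$: in the cases $\mathcal A_{wsh}$ and $\mathcal A_{sh}$ this is a Cayley--Abels graph $\Gamma_i$ with a (equivariant) bounded quasi-geodesic bicombing, and in the case $\mathcal A_{gcc}$ this is a geodesic coarsely convex space $X_i$ on which $(G_i,\mathcal H_i\cup\{K_i\})$ acts geometrically with the two extra isotropy conditions of Definition~\ref{def:group_pair_gcc}. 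Because $K_i\in\mathcal H_i\cup\{K_i\}$, the pair $(G_i,\mathcal H_i\cup\{K_i\})$ has a vertex (in $\Gamma_i$), resp.\ a bounded fixed set (in $X_i$), with stabilizer exactly $K_i$; I would use that vertex/point as the ``gluing locus.'' Set $L=\langle K_1,K_2\rangle\le G_1\ast_C G_2$; since $\partial_1,\partial_2$ are monomorphisms and $C$ embeds in each $K_i$, one has $L\cong K_1\ast_C K_2$, and a Cayley--Abels type graph $\Gamma_L$ for $(L,\{L\})$ is a single vertex (this is exactly the edgeless one-vertex graph observed after Theorem~\ref{thmx:general}).

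Next I would assemble the total space. Form the graph of spaces over a single edge: vertex spaces are $G_1\ast_C G_2 \times_{G_i} \Gamma_i$ (that is, the disjoint union over cosets $g G_i$ of copies $g\Gamma_i$ glued along the $G_i$-action), the edge space is $G_1\ast_C G_2 \times_L \Gamma_L$, and the attaching maps send the edge space into each vertex space via the $K_i$-fixed vertex. Concretely this produces a connected cocompact simplicial $(G_1\ast_C G_2)$-graph $\Gamma$: take one copy of $\Gamma_i$ for each coset of $G_i$ in the amalgam, and for each coset of $L$ identify (or join by an edge) the two corresponding $K_i$-fixed vertices. A Bass--Serre / normal-form argument shows $\Gamma$ is connected, that edge stabilizers are finite, that vertex stabilizers are finite or conjugate into $\mathcal H\cup\{L\}$, and that the defining properties of a Cayley--Abels graph for the pair $(G_1\ast_C G_2,\ \mathcal H\cup\{L\})$ hold; in the $\mathcal A_{gcc}$ case I would instead glue the spaces $X_i$ along their $K_i$-fixed sets (coning the bounded fixed sets to points first, if convenient) to get a geodesic space $X$ on which the amalgam acts geometrically, with $X^H$ bounded for $H\in\mathcal H\cup\{L\}$ (using that $X_i^{K_i}$ is bounded and $X_i^Q$ is bounded for $Q\in\mathcal H_i$) and with isotropy groups finite or conjugate into $\mathcal H\cup\{L\}$.

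Then comes the combinatorial heart: producing the bicombing on $\Gamma$ (resp.\ $X$). Every geodesic/quasi-geodesic in $\Gamma$ decomposes along the tree of pieces (the Bass--Serre tree of the splitting) into alternating segments lying in single copies of $\Gamma_1$ or $\Gamma_2$, passing through the one-vertex gluing loci. I would define the bicombing path between two points by concatenating the given bicombing paths inside each piece traversed by the unique ``track'' between the pieces containing the endpoints. The key estimates to check are: (i) boundedness --- the concatenation of uniformly bounded quasi-geodesic bicombing paths across a controlled number of pieces is again a uniform quasi-geodesic, which follows because the pieces are glued along single points so there is no backtracking and a Bass--Serre tree argument bounds the number of pieces crossed by the distance; (ii) equivariance in the $\mathcal A_{sh}$ case, which is automatic since the local bicombings are $G_i$-equivariant and the gluing and the choice of track are canonical; and (iii) in the $\mathcal A_{gcc}$ case, that the glued bicombing is coarsely convex --- here I would invoke the tree-of-spaces structure and a gluing principle for coarsely convex bicombings along points, verifying the convexity inequality piecewise and absorbing the finitely many transitions into the additive constant. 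I expect step (iii), the coarse convexity of the glued bicombing, to be the main obstacle: one must control the convexity defect of a concatenated path uniformly over all pairs of endpoints, and the cleanest route is probably to cite or adapt an existing ``coarsely convex spaces are closed under suitable gluings / tree-graded constructions'' statement (as used in the analogous arguments of Fukaya--Matsuka~\cite{FuMa23} and Bigdely--Mart\'inez-Pedroza~\cite{BiMa23}), reducing the whole theorem to that lemma plus the bookkeeping above.

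Finally, the second item is a purely algebraic statement about conjugacy in amalgamated products: an element $g\in G_i$ is conjugate in $G_1\ast_C G_2$ into $\mathcal H\cup\{L\}$ iff it is conjugate in $G_i$ into $\mathcal H_i\cup\{K_i\}$. The forward direction I would prove using the normal form theorem for amalgams together with the standard description of which elements of a factor are conjugate into another factor or into $C$ (so conjugacy in the amalgam into $L=K_1\ast_C K_2$ forces, after conjugating back inside $G_i$, conjugacy into $K_i$ or into $C\le K_i$); the reverse direction is immediate since $\mathcal H_i\subseteq\mathcal H$ and $K_i\le L$.
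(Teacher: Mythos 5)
Your overall architecture matches the paper's: build a tree of spaces over the Bass--Serre tree of the splitting, with vertex pieces coming from the given models for the factors, glue along single points, and assemble the bicombing on the total space by concatenating the given bicombings along the track in the tree. The paper does exactly this via its $\spike$ construction and the $C$-pushout (Theorem~\ref{thm:CombinationFine}), then applies the general combination theorem for bicombings on a tree of spaces (Theorem~\ref{TS-Main_thm}). So there is no disagreement about the shape of the proof, and item (2) is indeed a routine normal-form exercise.

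The gap is concentrated in what you yourself flag as step (iii). First, your gluing recipe is imprecise in a way that matters: for $\mathcal{A}_{gcc}$ you propose to glue $X_1$ and $X_2$ ``along their $K_i$-fixed sets (coning the bounded fixed sets to points first, if convenient),'' but $X_i^{K_i}$ is a bounded \emph{set}, not a point, there is no canonical identification $X_1^{K_1}\leftrightarrow X_2^{K_2}$, and coning the fixed set modifies $X_i$ in a way that is not obviously compatible with the geodesic coarsely convex bicombing or the relative geometric action. The paper's $\spike(X_i,C,x_i)$ construction (Definition~\ref{def:Spike}) is different: it leaves $X_i$ unchanged and simply attaches spike segments at the orbit of a chosen point $x_i$; together with the $C$-pushout this is what makes the fibers of $\xi\colon Z\to T$ over $L$-vertices singletons and the vertex spaces $\xi^{-1}(\starNb(k))$ convex, and what lets \cref{remark:conedoff-preserve-gcc} carry the bicombings over to the vertex pieces. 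More importantly, the core claim --- that a bicombing assembled by concatenation over such a tree of spaces is again geodesic coarsely convex --- is not a citable black box: it is precisely Theorem~\ref{TS-Main_thm}(\ref{TS-main-gcc}), one of the main technical contributions of the paper, and it does not follow by ``absorbing the finitely many transitions into the additive constant.'' The paper has to build the Section~4 machinery (coarsely consistent, coarsely forward/backward thin bicombings, Theorem~\ref{consistent-and-thin-cconvex}) and then verify those conditions for the concatenated bicombing by a delicate case analysis depending on where the tripod center in $T$ sits and how the parameter falls relative to entry/exit times of the common vertex piece (Lemmas~\ref{TS-final}, \ref{tscft}, \ref{tscbt}, plus the four-case proofs of Propositions~\ref{cft} and~\ref{cbt}). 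Deferring this to a citation leaves the hard half of the theorem unproved.
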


In the following statements, for a subgroup $K$ of a group $G$ and an element $g\in G$, the conjugate subgroup $gKg^{-1}$ is denoted by $K^g$.

\begin{theorem}[HNN-extension I] \label{thmx:IIIo} 
Let $\mathcal A$ be one of the classes $\mathcal{A}_{wsh}$, $\mathcal{A}_{sh}$, and $\mathcal{A}_{gcc}$.  
Let $(G, \mathcal{H} \cup\{K,L\})$ be a pair with $K\neq L$,  $C$  a  subgroup of $K$, and   $\varphi\colon C\to L$ a group monomorphism. Let $G\ast_{\varphi}$ denote the HNN-extension
$\langle G, t\mid  t c t^{-1} =\varphi(c)~\text{for all $c\in C$} \rangle$. Then:
 \begin{enumerate}
    \item \label{item:IIIb}  If $(G, \mathcal{H}\cup\{K,L\})$ is in  $ \mathcal{A}$, then $(G\ast_{\varphi},\mathcal H \cup\{\langle K^t, L\rangle\})$ is in $\mathcal A$.
    \item For any $g\in G$, the element $g$ is conjugate in $G$ to an element of some $Q\in\mathcal{H} \cup \{K, L\}$ if and only if $g$ is conjugate in $G\ast_\varphi$ to an element of some $H\in\mathcal{H}\cup \{ \langle K^t, L \rangle\}$. 
\end{enumerate}
\end{theorem}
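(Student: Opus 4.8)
\emph{Set-up.} The plan is to have $G\ast_{\varphi}$ act on a tree of models built over its Bass--Serre tree $T$. Recall $T$ has one orbit of vertices, with stabilisers the conjugates of $G$, and one orbit of edges, with stabilisers the conjugates of $C$; moreover there is an edge $e$ with $\operatorname{Stab}(e)=C^{t}=\varphi(C)$ joining a vertex $v$ with $\operatorname{Stab}(v)=G$ to a vertex $v'$ with $\operatorname{Stab}(v')=G^{t}$, and $C^{t}=\varphi(C)\le L\le G$ as well as $C^{t}\le K^{t}\le G^{t}$. Thus $P:=\langle K^{t},L\rangle$ is generated by the stabilisers of two adjacent vertices of the subtree $T_{P}$ spanned by the $P$-orbit of $e$; it acts on $T_{P}$ with fundamental domain $e$, so that $P\cong K^{t}\ast_{\varphi(C)}L$.

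\emph{The model.} Fix one of the three classes $\mathcal A$ and a model $\mathcal X$ witnessing $(G,\mathcal H\cup\{K,L\})\in\mathcal A$: for $\mathcal A_{wsh},\mathcal A_{sh}$ a Cayley--Abels graph $\Gamma$ of $(G,\mathcal H\cup\{K,L\})$ carrying a bounded quasi-geodesic bicombing, taken $G$-equivariant for $\mathcal A_{sh}$; for $\mathcal A_{gcc}$ a geodesic coarsely convex space $X$ with a geometric action of $(G,\mathcal H\cup\{K,L\})$ as in Definition~\ref{def:group_pair_is}. In $\mathcal X$ the subgroups $K$ and $L$ determine bounded $K$- and $L$-invariant ``peripheral regions'' $R_{K},R_{L}$ (a single vertex $w_{K}$, resp. $w_{L}$, in the graph case; the bounded fixed sets $X^{K},X^{L}$, which after a harmless equivariant modification of $X$ may be taken to be single points, in the space case), while all isotropy outside the $G$-orbits of $R_{K}\cup R_{L}$ is finite or conjugate into $\mathcal H$. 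Build the $G\ast_{\varphi}$-space $\mathcal Y$ by placing a copy $\mathcal X_{u}$ of $\mathcal X$ over each vertex $u$ of $T$ (via $\operatorname{Stab}(u)\cong G$) and, along each edge $f=g\cdot e$ in the orbit of $e$, identifying the copy of $R_{L}$ in the fibre over $g\cdot v$ with the copy of $R_{K}$ in the fibre over $g\cdot v'$ (this is well defined and $G\ast_{\varphi}$-equivariant since $\operatorname{Stab}(e)=\varphi(C)$ fixes $R_{L}$ and fixes $t\cdot R_{K}$). The combined effect of these identifications is to fuse the $K$- and $L$-peripheral regions of the fibres, along each copy of $T_{P}$, into regions $R$ whose stabilisers are the conjugates of $P$.

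\emph{Verification, transport of structure, and the conjugacy statement.} One checks that $G\ast_{\varphi}$ acts properly and cocompactly on $\mathcal Y$ (the fibres are cocompact and $T$ has finitely many orbits of cells, and the identifications, being along bounded sets, do not affect properness), and that the stabiliser of the fused region $R$ over $e$ is exactly $P$: it contains $\langle L,K^{t}\rangle=P$ since $R_{L}\subset\mathcal X_{v}$ and $t\cdot R_{K}\subset\mathcal X_{v'}$ both map into $R$, and the full preimage of $R$ in the disjoint union of fibres is precisely the $P$-orbit of $R_{L}$, which gives the reverse inclusion; here one uses the amalgam structure of $P$ together with condition~(4) in the definition of Cayley--Abels graph. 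The remaining point stabilisers are inherited from the fibres and are finite or conjugate into $\mathcal H$. Hence, for $\mathcal A_{wsh},\mathcal A_{sh}$, $\mathcal Y$ is a Cayley--Abels graph of $(G\ast_{\varphi},\mathcal H\cup\{P\})$, and, for $\mathcal A_{gcc}$, $\mathcal Y$ is a geodesic space on which $(G\ast_{\varphi},\mathcal H\cup\{P\})$ acts geometrically with all isotropy finite or conjugate into $\mathcal H\cup\{P\}$ and with $\mathcal Y^{P}$ bounded (in fact empty unless $P$ is elliptic on $T$). It remains to transport the coarse-geometric structure: the bicombing path between two points of $\mathcal Y$ follows the geodesic of $T$ joining their projections, runs the bicombing of $\mathcal X$ in each fibre crossed, and concatenates across the fused regions; that this is again a bounded (equivariant) quasi-geodesic bicombing, respectively a coarsely convex bicombing, is a relative, equivariant version of the combination theorems of Alonso--Bridson~\cite{AB95} and Fukaya--Matsuka~\cite{FuMa23} for a tree of models glued along bounded sets, and together with Proposition~\ref{prop:relative-Schwartz-Milnor} it gives item~\eqref{item:IIIb}. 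For the second item, the implication from $G$ to $G\ast_{\varphi}$ is immediate: a $G$-conjugate of an element of some $Q\in\mathcal H$ stays conjugate into $\mathcal H$; a $G$-conjugate of $k\in K$ is $G\ast_{\varphi}$-conjugate to $tkt^{-1}\in K^{t}\le P$; and a $G$-conjugate of $\ell\in L$ already lies in $L\le P$. Conversely, let $g\in G$ be conjugate in $G\ast_{\varphi}$ to $h\in H$ with $H\in\mathcal H\cup\{P\}$. Since $g$ is elliptic on $T$, so is $h$, and if $H=P$ ellipticity forces $h$ to be $P$-conjugate into $K^{t}$ or $L$; so in every case $g$ is $G\ast_{\varphi}$-conjugate to an element of $\mathcal H\cup\{K^{t},L\}$, hence---conjugating $K^{t}$ back by $t$---to an element $h'$ of $\mathcal H\cup\{K,L\}$ lying in $G$. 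Applying the conjugacy criterion for HNN-extensions to $g,h'\in G$, either the conjugating element may be taken in $G$, so that $g$ is $G$-conjugate to $h'$; or the conjugacy passes through an associated subgroup, so that $g$ is $G$-conjugate to an element of $C$ or of $\varphi(C)$, hence---as $C\le K$ and $\varphi(C)\le L$---to an element of $K\cup L$.

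\emph{The main obstacle.} The decisive step is transporting the coarse-geometric structure through the construction: one must verify that fusing the $K$- and $L$-peripheral regions preserves boundedness of the bicombing for $\mathcal A_{wsh},\mathcal A_{sh}$, and geodesic coarse convexity for $\mathcal A_{gcc}$---equivalently, that a tree of models from $\mathcal A$ glued along bounded sets, with the prescribed fusions, again lies in $\mathcal A$ in a $G\ast_{\varphi}$-equivariant, relative form. This is where the bulk of the work lies, and in the gcc case it subsumes checking that the preliminary modification of $X$ making the peripheral fixed sets into points is harmless. A secondary technical point is the identification of the stabiliser of the fused region with exactly $P$, which relies on bookkeeping the fusions along $T_{P}$ via the amalgam structure $P\cong K^{t}\ast_{\varphi(C)}L$.
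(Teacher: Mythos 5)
Your overall strategy matches the paper's: build a $G\ast_{\varphi}$-tree of spaces over the Bass--Serre tree from a model $\mathcal X$ for $(G,\mathcal H\cup\{K,L\})$, verify that the resulting space is an appropriate model for $(G\ast_\varphi,\mathcal H\cup\{\langle K^t,L\rangle\})$, and then propagate the bicombing structure through the tree of spaces. The paper does this via the coalescence construction (Definition~\ref{defn-coalescence}, Proposition~\ref{lem:HNN}) and then invokes Theorem~\ref{TS-Main_thm}. Two points in your proposal, however, are genuine gaps rather than routine omissions. First, in the $\mathcal A_{gcc}$ case you propose ``a harmless equivariant modification of $X$'' so that the fixed sets $X^K, X^L$ become single points, and then you glue these points across adjacent fibres. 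Collapsing a bounded but nontrivial convex region to a point is \emph{not} harmless: it is not clear that it preserves the geodesic coarsely convex structure, nor is it clear how to pick the identification of $R_L$ with $R_{K}^t$ $\varphi(C)$-equivariantly without such a collapse. The paper sidesteps this by \emph{adding} cone points via the $\spike$ construction (Definition~\ref{def:Spike}) rather than collapsing; this provably preserves the equivariant coarsely convex bicombing (Remark~\ref{remark:conedoff-preserve-gcc}), and the new spike points, having stabilizers $C$ and $\varphi(C)$ respectively, are the ones identified in the coalescence. Your construction would need this change (or a new argument justifying the collapse) before the gcc case goes through.

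Second, you cite ``a relative, equivariant version of the combination theorems of Alonso--Bridson and Fukaya--Matsuka'' to transport the bicombing across the tree of models. No such ready-made theorem is available; for the coarsely convex case this statement \emph{is} the principal technical contribution of the paper, namely Theorem~\ref{TS-Main_thm} together with the criterion Theorem~\ref{consistent-and-thin-cconvex} and the supporting Propositions~\ref{cft} and~\ref{cbt}, which occupy Sections~4 and~6. You correctly identify this as ``where the bulk of the work lies,'' but the proposal as written does not carry it out, so it is an outline of the correct strategy rather than a proof. Your proof of item~(2) via ellipticity in the Bass--Serre tree and the conjugacy criterion for HNN-extensions is fine and is in fact more explicit than what the paper records (the paper merely notes that the conjugacy claims are direct consequences of the construction).
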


Observe that the second items of Theorems~\ref{thmx:Io}
and~\ref{thmx:IIIo} are direct consequences. The corollary below follows directly from Theorem~\ref{thmx:Io}, it is the same argument as in~\cite[Proof of Corollary 1.4]{BiMa23}.  

\begin{corollary}[HNN-extension II]\label{thmx:II} 
Let $\mathcal A$ be one of the classes $\mathcal{A}_{wsh}$, $\mathcal{A}_{sh}$, and $\mathcal{A}_{gcc}$.  Let $(G, \mathcal{H} \cup\{K\})$ be a pair, $C$ a subgroup of $K$, $s\in G$, and $\varphi\colon C\to K^s$ a group monomorphism. Let $G\ast_{\varphi}$ denote the HNN-extension
$\langle G, t\mid t c t^{-1} =\varphi(c)~\text{for all $c\in C$} \rangle$. Then:
\begin{enumerate}
    \item \label{item:IIbb} 
     If $(G, \mathcal{H}\cup\{K\})$ is in $\mathcal{A}$, then $(G\ast_{\varphi},\mathcal H \cup\{\langle K, s^{-1}t\rangle\})$ is in $ \mathcal A$.
     \item For any $g\in G$, the element $g$ is conjugate in $G$ to an element of some $Q\in\mathcal{H} \cup \{K\}$ if and only if $g$ is conjugate in $G\ast_\varphi$ to an element of some $H\in\mathcal{H}\cup \{ \langle K, s^{-1}t \rangle\}$.
\end{enumerate}
\end{corollary}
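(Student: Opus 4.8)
The plan is to follow \cite[Proof of Corollary~1.4]{BiMa23} and deduce the statement from the Amalgamated Product theorem (Theorem~\ref{thmx:Io}) by re-presenting the HNN-extension $G\ast_\varphi$ as an amalgamated product over $K$. First I would change the stable letter: put $u=s^{-1}t$ and define $\psi\colon C\to K$ by $\psi(c)=s^{-1}\varphi(c)s$. Since $\varphi(c)\in K^s=sKs^{-1}$ we indeed have $\psi(c)\in K$, and $\psi$ is a monomorphism, being the composite of $\varphi$ with conjugation by $s^{-1}$. The assignments $g\mapsto g$ for $g\in G$ and $t\mapsto su$ define an isomorphism $G\ast_\varphi\xrightarrow{\ \cong\ }G\ast_\psi$, where $G\ast_\psi=\langle G,u\mid ucu^{-1}=\psi(c)\ \text{for all }c\in C\rangle$, and under it the subgroup $\langle K,s^{-1}t\rangle$ of $G\ast_\varphi$ corresponds to $\langle K,u\rangle$. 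The point of this move is that now both associated subgroups $C$ and $\psi(C)$ of the HNN-extension lie inside $K$.

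Next I would exhibit $G\ast_\psi$ as an amalgamated product. Let $\Gamma=K\ast_\psi=\langle K,u\mid ucu^{-1}=\psi(c)\ \text{for all }c\in C\rangle$ be the HNN-extension of $K$ along $\psi$, which is defined because $C,\psi(C)\le K$, and observe that the canonical map $K\hookrightarrow\Gamma$ is injective. Comparing universal properties of the two sides — a homomorphism to an arbitrary group $W$ out of either $G\ast_\psi$ or $G\ast_K\Gamma$ amounts to the same data, namely a homomorphism $G\to W$ together with an element of $W$ implementing $\psi$ by conjugation on $C$ — produces a canonical isomorphism
\[
 G\ast_\psi\;\cong\;G\ast_K\Gamma
\]
which restricts to the identity on $G$, carries $\Gamma$ isomorphically onto the subgroup $\langle K,u\rangle$, and has amalgamated subgroup $K$. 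In particular $\langle K,u\rangle\cong\Gamma$ sits inside $G\ast_\varphi$ as asserted, and $\langle K,\Gamma\rangle=\langle K,u\rangle=\langle K,s^{-1}t\rangle$.

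Finally I would apply Theorem~\ref{thmx:Io} to $G\ast_K\Gamma$, with the role of $C$ in that theorem played by $K$, with $(G_1,\mathcal H_1\cup\{K_1\})=(G,\mathcal H\cup\{K\})$ (which is in $\mathcal A$ by hypothesis) and $\partial_1=\mathrm{id}_K$, and with $(G_2,\mathcal H_2\cup\{K_2\})=(\Gamma,\{\Gamma\})$, i.e.\ $\mathcal H_2=\emptyset$ and $K_2=\Gamma$, and $\partial_2\colon K\hookrightarrow\Gamma$; here $(\Gamma,\{\Gamma\})$ is in $\mathcal A$ by the trivial observation recorded just after Theorem~\ref{thmx:general}. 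Since $\langle K_1,K_2\rangle=\langle K,\Gamma\rangle=\langle K,s^{-1}t\rangle$, Theorem~\ref{thmx:Io}\eqref{item:Ib} gives that $(G\ast_\varphi,\mathcal H\cup\{\langle K,s^{-1}t\rangle\})$ is in $\mathcal A$, which is item~\eqref{item:IIbb}; and item~(2) of the present statement is precisely item~(2) of Theorem~\ref{thmx:Io} for $i=1$ under the substitution $G_1=G$, $\mathcal H_1=\mathcal H$, $K_1=K$, together with the identities $\mathcal H_1\cup\mathcal H_2\cup\{\langle K_1,K_2\rangle\}=\mathcal H\cup\{\langle K,s^{-1}t\rangle\}$ and $G_1\ast_K G_2\cong G\ast_\varphi$. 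The only non-formal content is the two identifications $G\ast_\varphi\cong G\ast_\psi$ and $G\ast_\psi\cong G\ast_K\Gamma$; neither is a genuine obstacle, but some care is needed to confirm that the peripheral subgroup produced by Theorem~\ref{thmx:Io} is literally $\langle K,s^{-1}t\rangle$ and not merely a conjugate of it.
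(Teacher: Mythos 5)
Your proposal is correct and follows exactly the route the paper indicates: the paper proves this corollary by citing the argument of~\cite[Proof of Corollary 1.4]{BiMa23}, namely changing the stable letter to $u=s^{-1}t$, rewriting $G\ast_\varphi\cong G\ast_\psi\cong G\ast_K(K\ast_\psi)$, and applying Theorem~\ref{thmx:Io} with the second factor $(K\ast_\psi,\{K\ast_\psi\})$, which is in $\mathcal{A}$ by the observation following Theorem~\ref{thmx:general}. Your verification that the peripheral subgroup produced is literally $\langle K,s^{-1}t\rangle$ (via $u\mapsto s^{-1}t$ under the inverse isomorphism) is the right point to be careful about, and it checks out.
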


 The induction argument proving Theorem~\ref{thmx:general} from Theorems~\ref{thmx:Io} and~\ref{thmx:II} and Corollary~\ref{thmx:II} is the same as the argument described in~\cite[Proof of Theorem 1.1 and Remark1.5]{BiMa23}.

Quasi-isometries of group pairs have been a recent object of study~\cite{MPS22, HaHr19}. Roughly speaking they are quasi-isometries between the underlying groups that map left cosets of subgroups in the peripheral structure of the domain to left cosets of the peripheral structure of the target up to uniform finite Hausdorff distance. There is a number of properties of group pairs that are invariant under quasi-isometry including 
having a well-defined relative Dehn function~\cite{HMPS23,Os06}; having an almost malnormal peripheral structure~\cite{MPS22};  being relatively hyperbolic with respect to the peripheral structure~\cite{DS05,BDM09};    having a finite number of filtered ends (Bowditch's coends) with respect to the peripheral structure~\cite{MPS22}; and the peripheral structure having finite height or finite width~\cite{AbMa24}. 
Quasi-isometric group pairs have quasi-isometric Cayley-Abels graphs~\cite[Theorem H]{ArMP22}, and therefore  
being weakly hyperbolic and coarsely convex are quasi-isometry properties of group pairs. 


There is a notion of \emph{virtual isomorphism} for group pairs which generalizes commensurability of groups,  and virtual isomorphic group pairs are quasi-isometric, see~\cite[Def. 2.8 and Prop. 2.9]{AbMa24}. 

\begin{question}
Suppose $(G,\mathcal{H})$ and $(G',\mathcal{H'})$ are virtually isomorphic.
If $(G,\mathcal{H})$ is semihyperbolic, is
$(G',\mathcal{H'})$ semihyperbolic?  
\end{question}

This article is part of a research program aiming to generalize known
results on the coarse Baum-Connes conjecture from hyperbolic group pairs to coarsely convex group pairs. The purpose of this article is to establish a
foundation for the theory of coarsely convex group pairs and to study
their properties as a first step of the program. Below we give a brief overview of some of the short term questions that we aim to address in the future.


\subsection{Coarse Baum-Connes conjecture}
We can associate a proper metric space $Y$ with a $C^*$-algebra $C^*(Y)$,
called the Roe-algebra. Roe~\cite{MR1147350,MR1399087} 
constructed an index of a Dirac type operator on a complete
Riemannian manifold as an element of the 
$K$-group of the Roe-algebra of the manifold. In general it is very hard to
compute the $K$-groups of $C^*$-algebras. The coarse assembly map relates
the $K$-group of the Roe-algebra $C^*(Y)$ with the coarse $K$-homology 
$KX_*(Y)$ of $Y$. The coarse Baum-Connes conjecture says 
that the coarse assembly map is an isomorphism. Higson and Roe showed that
the conjecture holds for proper geodesic Gromov hyperbolic spaces.
The first author and S.Oguni showed the following two generalizations of 
Higson and Roe's results.

\begin{theorem}[{\cite{relhypgrp}}]
\label{thm:cBC-relhyp} 
 Let $(G,\mathcal{H})$ be a hyperbolic group pair where 
 $\mathcal{H}$ is a finite collection of infinite subgroups.
 If each subgroup $H\in \mathcal{H}$ satisfies
 the coarse Baum-Connes conjecture, and  admits a finite $H$-simplicial
 complex which is a universal space for proper actions, then $G$ 
 satisfies the coarse Baum-Connes conjecture.
\end{theorem}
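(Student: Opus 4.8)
The plan is to follow the argument of Fukaya and Oguni in~\cite{relhypgrp}: realise $G$ inside a proper geodesic \emph{hyperbolic} space where the theorem of Higson and Roe quoted above applies, and then transfer the coarse Baum--Connes isomorphism back to $G$ by a permanence argument of Mayer--Vietoris type whose only additional input is the coarse Baum--Connes conjecture for the peripheral subgroups.

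Fix a finite generating set $S$ of $G$. Using the hypothesis that each $H\in\mathcal{H}$ has a finite simplicial model $\underline{E}H$ for proper actions, build a finite-dimensional, locally finite, $G$-cocompact complex $\mathsf{Z}$ by attaching to the Cayley graph $\Gamma(G,S)$ a copy of $\underline{E}H$ along each coset $gH$. Since peripheral subgroups of a relatively hyperbolic group are undistorted, $\mathsf{Z}$ is coarsely equivalent to $G$, so it suffices to prove the coarse Baum--Connes conjecture for $\mathsf{Z}$. In parallel, using relative hyperbolicity, form the associated cusped (augmented) space $X$ obtained by attaching combinatorial horoballs along the same family of cosets; $X$ is a proper geodesic hyperbolic space, so by Higson and Roe the coarse assembly map $\mu\colon KX_*(X)\to K_*(C^*(X))$ is an isomorphism. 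The point is that $\mathsf{Z}$ and $X$ are built $G$-equivariantly from the same Cayley graph by replacing each peripheral coset with a cone-like piece --- a finite $\underline{E}H$ in one case, an infinite horoball in the other --- and this is the device for comparing their coarse $K$-theory.

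The comparison assembles the coarse assembly maps for $\mathsf{Z}$, for $X$, and for the disjoint union $\bigsqcup_{gH} H$ of peripheral models into a ladder of Mayer--Vietoris-type long exact sequences, coming from the common decomposition into the Cayley-graph part and the peripheral pieces, together with a coarse homotopy invariance statement identifying the peripheral contribution: on the $\mathsf{Z}$-side a disjoint union of copies of the finite complexes $\underline{E}H$, hence coarsely a disjoint union of copies of the $H$; on the $X$-side a disjoint union of horoballs, each a proper geodesic hyperbolic space and hence satisfying the conjecture by Higson and Roe. Finiteness of $\mathcal{H}$ and cocompactness of each $\underline{E}H$ guarantee that these peripheral pieces occur in only finitely many isometry types of uniformly bounded geometry, so that the relevant permanence of the coarse Baum--Connes conjecture under disjoint unions is available; the hypothesis that each $H$ satisfies the conjecture feeds in exactly at the $\mathsf{Z}$-side peripheral term. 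Knowing the assembly map is an isomorphism on the peripheral terms and on $X$, the five lemma applied to the ladder yields the isomorphism for $\mathsf{Z}$, and hence for $G$.

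The main obstacle is setting up this comparison correctly. Because the peripheral cosets are distributed throughout $G$, one cannot separate the horoballs from the Cayley graph by a naive open cover of a single metric space: the decomposition and the resulting Mayer--Vietoris sequences must be organised around the $G$-actions on $X$ and $\mathsf{Z}$ and around the hyperbolic geometry of $X$, and the identification of the peripheral contribution with $\bigsqcup_{gH} H$ relies on relative hyperbolicity (undistortion of peripheral subgroups and bounded coarse intersection of distinct cosets). Verifying coarse excisiveness of the chosen decomposition and making the disjoint-union permanence effective with the uniform bounds supplied by the finite models $\underline{E}H$ are the steps that require real work; these constitute the technical core of~\cite{relhypgrp}.
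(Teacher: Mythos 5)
This paper does not prove \Cref{thm:cBC-relhyp}: it is quoted as a known result of Fukaya and Oguni (the reference \cite{relhypgrp}) to motivate \Cref{conj_cBC}, and no argument for it appears anywhere in the present text. There is therefore no ``paper's own proof'' to compare your proposal against, and anything you write for this statement is a reconstruction of the cited work rather than of this paper.

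As a reconstruction, your sketch is on the right track at the level of strategy: the Fukaya--Oguni argument does build the Groves--Manning cusped (augmented) space $X$ by attaching combinatorial horoballs to a Cayley graph of $G$, invokes Higson--Roe for the proper geodesic hyperbolic space $X$, uses the finite $\underline{E}H$ hypothesis to produce a cocompact, bounded-geometry model on the group side, and feeds the cBC hypothesis on the peripheral subgroups into a comparison of assembly maps. You also correctly flag the two genuine technical obstructions --- coarse excisiveness of the thick/cusp decomposition when the cosets are spread densely through $G$, and the uniformity needed to apply disjoint-union permanence to the peripheral pieces --- and correctly note that bounded coset intersection from relative hyperbolicity is what makes the decomposition tractable. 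What your outline cannot do, and does not claim to do, is verify that the specific Mayer--Vietoris-type ladder you describe is literally the mechanism used in \cite{relhypgrp}: that paper organises the comparison somewhat differently (via the corona/boundary formalism together with coarse excision), and without reproducing its lemmas you have at best a plausible shape, not a proof. In short: the proposal is a reasonable high-level summary of the cited argument, but it should not be mistaken for a proof that could be checked against this paper, because this paper contains none.
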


\begin{theorem}[{\cite{FO20}}]
\label{thm:cBC-cc} 
 The coarse Baum-Connes conjecture holds for proper coarsely convex spaces.
\end{theorem}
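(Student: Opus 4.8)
The plan is to follow the strategy of Fukaya--Oguni: reduce the statement to the construction of a suitable compactification (a \emph{corona}) of the proper coarsely convex space $Y$, and then invoke the general machinery relating such coronae to the coarse assembly map, just as Higson and Roe used the Gromov boundary in the hyperbolic case.

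First I would fix a basepoint $o\in Y$ and use the coarsely convex bicombing $\bar\gamma$ to build an \emph{ideal boundary} $\partial Y$. Consider the geodesic-like rays obtained as limits of the bicombing paths $\bar\gamma(o,y)$ with $y\to\infty$, declare two such rays equivalent when they remain within $\bar\gamma$-controlled distance of one another, and let $\partial Y$ be the set of equivalence classes; topologise $\bar Y:=Y\sqcup\partial Y$ with the usual cone topology. The first block of work is to prove that $\bar Y$ is a compact metrizable space and that it is ``small at infinity''. This uses properness of $Y$ (so closed balls are compact) together with the coarse convexity inequality to keep families of rays controlled, so that an Arzel\`a--Ascoli argument produces the required limit rays and a countable basis. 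One must also show that $\bar\gamma$ extends, continuously and in a coarsely controlled fashion, over $\bar Y$; in particular, sliding each point $y$ toward $o$ along $\bar\gamma(o,y)$ defines a contraction $c\colon \bar Y\times[0,1]\to\bar Y$ which is the identity at time $0$, constant at time $1$, and restricts on $Y$ to a coarse homotopy from the identity toward the basepoint.

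The second block is to verify that the pair $(\bar Y,\partial Y)$ satisfies the axioms required by the general reduction theorem for coronae (Fukaya--Oguni's work on coronae and the coarse Baum--Connes conjecture; cf.\ also Emerson--Meyer's duality between the coarse assembly and co-assembly maps): $\partial Y$ is a corona of $Y$, the compactification is continuously controlled at infinity, and the boundary carries the contraction $c$ produced above. Granting these, the reduction theorem gives that the coarse assembly map for $Y$ is an isomorphism: the contraction $c$ supplies, through a Mayer--Vietoris / finite-open-cover argument on $\bar Y$, the $K$-theoretic homotopies that identify $KX_*(Y)$ with $K_*(C^*(Y))$, reducing the computation to that of a point.

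I expect the main obstacle to be the first block: compactness and metrizability of $\bar Y$, and above all the correct continuity and control properties of the extended bicombing. In the hyperbolic case these are classical facts about the Gromov boundary, but the defining inequality of a coarsely convex space involves the bicombing parameter together with affine-type constants and a reparametrisation function, so the estimates governing convergence of rays and continuity of $\bar\gamma$ up to $\partial Y$ are considerably more delicate; arranging the contraction $c$ to be simultaneously continuous on $\bar Y$ and a coarse homotopy on $Y$ is the technical heart of the argument. A secondary point is that the cleanest form of the corona reduction assumes bounded geometry, so for a general proper coarsely convex space one should either pass to a coarsely equivalent bounded geometry model (e.g.\ a Rips-type complex) or use a version of the reduction valid for arbitrary proper metric spaces.
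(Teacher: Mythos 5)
Your Block~1 (construction of the ideal boundary $\partial Y$ via limits of bicombing paths, compactness/metrizability via properness and Arzel\`a--Ascoli, and the delicate control estimates coming from the coarsely convex inequality and the reparametrisation function $\theta$) does match the setup in Fukaya--Oguni's paper, and your identification of this as the technical heart is accurate. However, your Block~2 diverges from what they actually do. You propose to treat $\bar Y=Y\sqcup\partial Y$ as a \emph{corona}, verify the corona axioms (continuously controlled at infinity, etc.), and invoke a corona reduction theorem together with a Mayer--Vietoris argument to identify $KX_*(Y)$ with $K_*(C^*(Y))$. The proof in \cite{FO20} instead proves a \emph{coarse Cartan--Hadamard theorem}: using the contraction along the bicombing (essentially your map $c$), they construct an explicit ``exponential map'' and show that $Y$ is \emph{coarsely homotopy equivalent} to the Euclidean open cone $\mathcal{O}\partial Y$ over the ideal boundary. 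They then quote Higson--Roe's theorem that the coarse Baum--Connes conjecture holds for open cones over compact metrizable spaces, and finish by coarse homotopy invariance of the conjecture. No corona axioms or Mayer--Vietoris induction on $\bar Y$ are needed; all the work is concentrated in the coarse homotopy equivalence, and the $K$-theoretic conclusion is obtained wholesale from Higson--Roe.

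Both routes are viable in principle, and your corona strategy is closer in spirit to Fukaya--Oguni's treatment of relatively hyperbolic groups and to the Emerson--Meyer co-assembly duality you cite; but for coarsely convex spaces the open cone route is cleaner precisely because the bicombing gives such a concrete contraction, which is exactly what an open cone structure encodes. Your final remark about bounded geometry is well taken and is in fact addressed in \cite{FO20}: they work with a bounded geometry model (a Rips-type complex coarsely equivalent to $Y$) before applying the open cone machinery, exactly the kind of preliminary reduction you anticipate.
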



Assuming a positive answer to Question~\ref{question_CC}, if 
$(G,\mathcal{H})$ is a (geodesic) coarsely convex group pair such that
 $\mathcal{H}$ is a finite family of infinite (geodesic) coarsely convex subgroups, then $G$ satisfies the coarse Baum-Connes conjecture. We expect the following statement to hold true.

\begin{conjecture}
\label{conj_cBC} 
 Let $(G,\mathcal{H})$ be a (geodesic) coarsely convex group pair such that
 $\mathcal{H}$ be a finite family of infinite subgroups.
 If each subgroup $H\in \mathcal{H}$ satisfies
 the coarse Baum-Connes conjecture, and  admits a finite $H$-simplicial
 complex which is a universal space for proper actions.
 Then $G$ satisfy the coarse Baum-Connes conjecture.
\end{conjecture}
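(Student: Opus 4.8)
The plan is to run the argument of Theorem~\ref{thm:cBC-relhyp} with the coarse convexity supplied by the pair $(G,\mathcal{H})$ in place of the Gromov hyperbolicity of the Groves--Manning cusped space, and with Theorem~\ref{thm:cBC-cc} in place of the Higson--Roe theorem for hyperbolic spaces.

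\emph{Reduction to an augmented space.} Fix for each $H\in\mathcal{H}$, as in the hypothesis, a finite $H$-simplicial complex $\underline{E}H$ that is a universal space for proper $H$-actions; since it is cocompact it is quasi-isometric to $|H|$, and hence satisfies the coarse Baum--Connes conjecture because $H$ does. Starting from the coarsely convex geometry furnished by $(G,\mathcal{H})$ --- the space $X$ of Definition~\ref{def:group_pair_gcc}, or a Cayley--Abels graph carrying a coarsely convex bicombing --- build an \emph{augmented space} $A(X)$ by ``un-coning'': replace each peripheral region (the bounded fixed set $gX^{H}$, respectively the cone vertex of the coset $gH$) by a copy of $\underline{E}H$, $G$-equivariantly and one per coset $gH$, re-attaching the incident edges, so that $A(X)$ is a proper geodesic space, the inserted copies carry the coarse geometry of the $|H|$, and collapsing each inserted copy back to a bounded set recovers the coarsely convex space $X$. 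This is the analogue, with $\underline{E}H$ replacing a combinatorial horoball, of the passage from a coned-off Cayley graph to a cusped space. Because $G\backslash X$ and each $H\backslash\underline{E}H$ are compact, $G$ acts geometrically on $A(X)$, so by the Schwarz--Milnor lemma $A(X)$ is quasi-isometric to $|G|$, and it suffices to establish the coarse Baum--Connes conjecture for $A(X)$. Note that $A(X)$ is not known to be coarsely convex (compare Question~\ref{question_CC}), so Theorem~\ref{thm:cBC-cc} cannot be applied to it directly.

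\emph{Coarse Baum--Connes for the augmented space.} Following~\cite{relhypgrp} and~\cite{FO20}, one establishes this by exhibiting a corona $\partial A(X)$ --- a metrizable boundary compactifying $A(X)$ --- satisfying the conditions that force the coarse assembly map of $A(X)$ to be an isomorphism. The corona is to be assembled from the ideal boundary $\partial_{\infty}X$ of the coarsely convex bulk, which exists by coarse convexity~\cite{FO20}, together with contributions from the inserted copies of $\underline{E}H$; since these copies recede to infinity in the bulk as $gH$ varies, their contributions accumulate onto $\partial_{\infty}X$, and the coarse Baum--Connes hypothesis on each $H$ is what governs the behaviour of $\partial A(X)$ along those accumulation loci. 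A naive coarse Mayer--Vietoris splitting of $A(X)$ into its bulk and the union of the inserted pieces is circular --- the union of all inserted pieces again carries a geometric $G$-action, hence is quasi-isometric to $|G|$ --- which is why the argument must genuinely take place at infinity.

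\emph{Main obstacle.} The heart of the matter, and the main obstacle, is the construction of the corona and the verification of its axioms. In the hyperbolic case of Theorem~\ref{thm:cBC-relhyp} this rests on the Bowditch boundary of the cusped space, in which each peripheral horoball pinches to a single parabolic point with well-understood topology nearby; for a coarsely convex bulk the inserted peripheral pieces need not pinch to points, so one must supply a substitute: a controlled description of how $\partial_{\infty}X$ is ``thickened'' by the peripheral boundaries, and a proof that the resulting corona satisfies the required conditions. Developing this coarse geometry --- of a coarsely convex space carrying subspaces glued along bounded sets, and of its boundary --- is the genuinely new content. A further technical point is the comparison between the generally non-proper coned-off models and the proper models to which Theorem~\ref{thm:cBC-cc} applies, together with a permanence statement covering the $G$-invariant family of inserted peripheral pieces; these can follow the relatively hyperbolic template but must be reworked. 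That the conjecture should hold even though $A(X)$ need not be coarsely convex reflects the fact that the coarse Baum--Connes conjecture propagates through this ``un-coning'' operation, unlike coarse convexity itself --- which is exactly the obstruction recorded in Question~\ref{question_CC}.
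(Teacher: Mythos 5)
The statement you were asked to prove is recorded in the paper as Conjecture~\ref{conj_cBC}; the authors write only ``We expect the following statement to hold true'' and give no proof. There is therefore no proof in the paper against which your proposal can be matched, and this should already tell you that a complete argument is not expected here.

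Reading your proposal on its own terms, it is an outline of a strategy rather than a proof, and you are candid about this. You replace the coned-off peripheral pieces by copies of the finite models $\underline{E}H$ to obtain an augmented space $A(X)$ carrying a geometric $G$-action, hence quasi-isometric to $G$, and you reduce the coarse Baum--Connes conjecture for $G$ to producing a suitable corona for $A(X)$; you then flag the construction of that corona and the verification of its axioms as ``the main obstacle'' and ``the genuinely new content.'' That assessment is correct, and it is precisely the obstruction that keeps this statement at the level of a conjecture. In the relatively hyperbolic case of Theorem~\ref{thm:cBC-relhyp} the corona is supplied by the Bowditch boundary, whose structure near parabolic points is well understood; in the coarsely convex case the ideal boundary $\partial X$ exists by the framework underlying Theorem~\ref{thm:cBC-cc}, but there is no analogue of the pinching of peripheral regions to parabolic points, and, as you yourself observe, $A(X)$ is not known to be coarsely convex (this is essentially Question~\ref{question_CC}), so Theorem~\ref{thm:cBC-cc} cannot be applied to it directly. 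Until that corona is actually constructed and shown to satisfy the required conditions --- which is not done in your proposal and not done in the paper --- there is no proof. What you have is a plausible plan of attack with the decisive step left open; you should present it as such, not as a proof.
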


In this context, the main results of this article would provide a tool to expand the class of groups that satisfy the coarse Baum-Connes conjecture.
\subsection{Boundaries  of group pairs}
Ideal boundaries of Gromov hyperbolic spaces play important roles 
in geometric group theory and noncommutative geometry.
In \cite{FO20}, the first author and Oguni constructed 
the ideal boundary $\partial X$
of a coarsely convex space $X$, and showed that the Euclidean 
open cone $\mathcal{O}\partial X$ is coarsely homotopy equivalent to $X$
via the ``exponential map''. 
It follows that the coarse Baum-Connes conjecture holds for coarsely convex spaces,
by applying Higson and Roe's result on the conjecture for open cones.
We remark that in the case $X$ is hyperbolic, $\partial X$ is homeomorphic 
to the Gromov boundary of $X$.

Let $G$ be a group acting geometrically on a proper coarsely convex space.
If $G$ admits a finite model for the classifying space $BG$, then by
applying Engel and Wulff's work on the combing coronas 
\cite[Corollary 7.13]{doi:10.1142/S1793525321500643}, we obtain
the following formula.
\begin{align*}
 \mathrm{cd}(G) = \dim(\partial G)+1.
\end{align*}
For details, see \cite[Corollary 8.10.]{FO20}.
If $G$ is hyperbolic, this formula is due to Bestvina and Mess~\cite{MR1096169}.

Let $(G,\calh)$ be a geodesic coarsely convex group pair and let 
$X$ be a geodesic coarsely convex space such that $G$ acts on $X$ 
as in item (\ref{def:group_pair_gcc}) of \cref{def:group_pair_is}.

In the case $(G,\calh)$ is hyperbolic, the Gromov boundaries of $X$ and the Cayley-Abels graph of the pair $(G,\calh)$ are homeomorphic~\cite{Gr87}, and this space is called 
the \emph{Bowditch boundary} and we can denoted by $\partial (G,\mathcal H)$.
We remark that for geodesic coarsely convex group pairs this is not the case, Croke and Kleiner found a group acting geometrically on two CAT(0) spaces with non-homeomorphic ideal boundaries~\cite{CK00}.


Manning and Wang \cite[Theorem 5.1.]{Cohom-Bowd-bdry} proved the following.
Let $(G,\calh)$ be a hyperbolic group pair of type $F$. Suppose further that
$\mathrm{cd}(G)< \mathrm{cd}(G,\calh)$. Then,
\begin{align}
    \label{formula_cohom-dim}
 \mathrm{cd}(G,\calh) = \dim(\partial(G,\calh)) +1.
\end{align}

\begin{question}
  Let $(G,\calh)$ be a (geodesic) coarsely convex group pair of type $F$ with a geometric action on a  (geodesic) coarsely convex space $X$. 
  Suppose that
  $\mathrm{cd}(G)< \mathrm{cd}(G,\calh)$. 
    Does $\mathrm{cd}(G,\calh) = \dim(\partial X) +1$ holds? 
\end{question}


\section{Relative geometric action}


In this section, we introduce the notion of relative properly discontinuous action  on a geodesic metric space, the notion of a relative geometric action, and we prove a relative version of the Schwarz-Milnor lemma.  For the rest of the section, let $G$ be a group and $\mathcal{H}$ be a finite collection of subgroups, and $X$ a geodesic metric space with a $G$-action by isometries. 

\subsection{Relative properly discontinuous action}

\begin{definition}
\label{def:rel-p.d.actionII}
 We say that a subset $V\subset G$ 
 has a \emph{finite diameter relative to $\mathcal{H}$}, if 
 there exist a finite subset $S\subset G$ such that the diameter of $V$
 measured by the word metric with respect to the set 
 $S\cup \bigcup\mathcal{H}$ is finite. 

 The $G$-action on $X$ is 
 \emph{properly discontinuous relative to $\mathcal{H}$} 
 if for every bounded open subset $U\subset X$,  
 the set $\{g\in G\colon U\cap g.U\neq \emptyset\}$
 has a finite diameter relative to $\mathcal{H}$.
\end{definition}

\begin{remark}
 In the above definition, we do not assume that the set 
 $S\cup \bigcup \mathcal{H}$ generate $G$. So if $g\in G$ is not included in
 the subgroup generated by $S\cup \bigcup \mathcal{H}$, we consider the word length
 of $g$ is infinite. 
 
 We also consider the coned-off Cayley graph $\hat\Gamma(G,\mathcal{H},S)$,
 which is not necessarily connected.
 The word metric with respect to
 the set $S\cup \bigcup\mathcal{H}$ is bi-Lipschitz equivalent to 
 the graph metric $\dist_{\hat\Gamma}$ on $\hat\Gamma(G,\mathcal{H},S)$.
\end{remark}

\begin{lemma}
\label{lem:criteria-p.d.action}
 The action of $G$ on $(X,d_X)$
 is properly discontinuous relative to 
 $\mathcal{H}$ if and only if there exists $x_*\in X$ such that 
 for all $r>0$, the set $\{g\in G\mid d_X(x_*, g.x_*)\leq r\}$ 
 has a finite diameter relative to $\mathcal{H}$.
\end{lemma}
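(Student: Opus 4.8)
The plan is to prove both implications by the standard translation between ``ball conditions'' at a basepoint and the ``$U\cap g.U\neq\emptyset$'' condition, using two elementary observations: first, that the property of having finite diameter relative to $\mathcal{H}$ is inherited by subsets, since for a fixed finite $S\subset G$ the diameter of a subset in the word metric with respect to $S\cup\bigcup\mathcal{H}$ is bounded by the diameter of the ambient set (Definition~\ref{def:rel-p.d.actionII}); and second, that $G$ acting by isometries gives $g.B(x,\rho)=B(g.x,\rho)$ for all $x\in X$, $\rho>0$, $g\in G$.

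For the forward implication, assume the action is properly discontinuous relative to $\mathcal{H}$ and fix \emph{any} $x_*\in X$. Given $r>0$, take $U=B(x_*,r+1)$, which is bounded and open. If $g\in G$ satisfies $d_X(x_*,g.x_*)\le r$, then $x_*\in U$ trivially, while $d_X(g.x_*,x_*)\le r<r+1$ gives $x_*\in B(g.x_*,r+1)=g.U$; hence $x_*\in U\cap g.U\neq\emptyset$. Therefore $\{g\in G\mid d_X(x_*,g.x_*)\le r\}\subseteq\{g\in G\mid U\cap g.U\neq\emptyset\}$, and the right-hand side has finite diameter relative to $\mathcal{H}$ by hypothesis, so the left-hand side does as well.

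For the converse, assume there is $x_*\in X$ with the stated ball property, and let $U\subset X$ be bounded and open; choose $R>0$ with $U\subseteq B(x_*,R)$. If $g\in G$ satisfies $U\cap g.U\neq\emptyset$, pick $y\in U\cap g.U$. Then $d_X(x_*,y)<R$, and since $g^{-1}.y\in U$ we also get $d_X(x_*,g^{-1}.y)<R$, hence $d_X(g.x_*,y)<R$ because $g$ is an isometry. The triangle inequality yields $d_X(x_*,g.x_*)<2R$, so $g$ lies in $\{h\in G\mid d_X(x_*,h.x_*)\le 2R\}$. Thus $\{g\in G\mid U\cap g.U\neq\emptyset\}$ is a subset of a set of finite diameter relative to $\mathcal{H}$, and therefore has finite diameter relative to $\mathcal{H}$ itself.

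I do not expect a genuine obstacle here; the argument is a routine bookkeeping of distances. The one point worth stating explicitly in the writeup is the monotonicity of the ``finite diameter relative to $\mathcal{H}$'' condition under inclusion, since without it the reductions above would not conclude; this is immediate from the definition, as the same finite set $S$ witnesses finiteness of the diameter for any subset.
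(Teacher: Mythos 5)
Your proof is correct and your argument for the ``if'' direction is essentially the same distance bookkeeping as the paper's: bound a given $U$ inside a ball $B(x_*,R)$, and use an isometry estimate to get $d_X(x_*,g.x_*)<2R$ whenever $U\cap g.U\neq\emptyset$. The paper in fact only writes out this direction, treating the ``only if'' direction as immediate; you spell it out carefully by observing that $\{g\in G\mid d_X(x_*,g.x_*)\le r\}\subseteq\{g\in G\mid U\cap g.U\neq\emptyset\}$ for $U=B(x_*,r+1)$, and you correctly flag the monotonicity of ``finite diameter relative to $\mathcal{H}$'' under inclusion as the point that makes both reductions conclude. No gaps; if anything yours is the more complete writeup.
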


\begin{proof}
 Let $U\subset X$ be a bounded open subset. There exists $r>0$ such 
 that $U\subset B(x_*,r)$, where $B(x_*,r)$ denotes the open ball of 
 radius $r$ centered at $x_*$. Let $g\in G$. If 
 $U\cap g.U \neq \emptyset$, then we have $d_X(x_*, g.x_*)\leq 2r$.
 Therefore, by the assumption, the set 
 $\{g\in G\colon U\cap g.U\neq \emptyset\}$
 has a finite diameter relative to $\mathcal{H}$.
\end{proof}

\subsection{Relative geometric action}

\begin{definition}
\label{def:rel-geometric-action}
The $G$-action on $X$ is \emph{geometric relative to
$\mathcal{H}$} if it is properly discontinuous 
relative to
$\mathcal{H}$, is cocompact, and for every $H\in \mathcal{H}$ the fixed
point set $X^H$ is non-empty. In this case, we say that the group pair $(G,\mathcal{H})$ \emph{acts geometrically} on $X$.
\end{definition}

The following example illustrates that for a relative geometric action on a geodesic space, the fixed point sets of the peripheral subgroups are not necessarily bounded. This condition is required in the definition of geodesic coarsely convex group pair.

\begin{example}
 Consider the group  $G=\Z\times\Z\times \Z$ and the subgroups $H=\Z\times\Z\times \{0\}$ and $K=\Z\times\{0\}\times \{0\}$. The action of $G$ on the coned-off Cayley graph $X=\hat\Gamma(G,\{H,K\})$ is geometric relative to the subgroup $H$. This example illustrates that
 \begin{enumerate}[label=$($\alph*$)$]
 \item the fixed point set $X^H$ is not bounded; and 
\item the $G$-stabilizer of any vertex of $X$ of the form $gK$ is neither finite nor a conjugate of $H$.  
\end{enumerate}
\end{example}



\begin{corollary}
Let $G$ be a group, $\mathcal H$ be a finite collection of subgroups,
and $S$ a finite (relative) generating set of $G$ (with respect to
$\mathcal H$). The $G$-action on the coned-off Cayley graph
$\hat\Gamma(G,\mathcal{H},S)$ is geometric relative to $\mathcal H$
\end{corollary}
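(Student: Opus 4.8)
The plan is to verify the three conditions in Definition~\ref{def:rel-geometric-action} for the action of $G$ on $X=\hat\Gamma(G,\mathcal{H},S)$, together with the nonemptiness of the fixed point sets. First I would fix a basepoint: let $x_*$ be the vertex of $X$ corresponding to the identity element $1\in G$ (recall the vertex set of the coned-off Cayley graph is $G$ together with one cone point $v_{gH}$ for each left coset $gH$ with $H\in\mathcal H$). By the Remark following Definition~\ref{def:rel-p.d.actionII}, the word metric on $G$ with respect to $S\cup\bigcup\mathcal H$ is bi-Lipschitz equivalent to $\dist_{\hat\Gamma}$ restricted to $G\subset X$, and since every vertex of $X$ is within distance $1$ of a vertex in the $G$-orbit of $x_*$, the same holds up to an additive constant for all of $X$. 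Consequently, for any $r>0$ the set $\{g\in G\mid d_X(x_*,g.x_*)\le r\}$ is contained in a ball of the word metric relative to $\mathcal H$, hence has finite diameter relative to $\mathcal H$ in the sense of Definition~\ref{def:rel-p.d.actionII}; by Lemma~\ref{lem:criteria-p.d.action} this shows the action is properly discontinuous relative to $\mathcal H$.

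Next I would check cocompactness. The $G$-action on vertices has finitely many orbits: one orbit of ``group vertices'' (indexed by $G/\{1\}$, a single orbit), and one orbit of cone points for each $H\in\mathcal H$ (since the cone points over $gH$ form the orbit $G\cdot v_{H}$), and $\mathcal H$ is finite. Likewise the edge set decomposes into finitely many $G$-orbits: the edges coming from the generating set $S$ give finitely many orbits (one for each $s\in S$, modulo the relation $s\sim s^{-1}$), and the ``coning'' edges joining $v_{gH}$ to each element of $gH$ form one orbit per $H\in\mathcal H$. Since $X$ is a simplicial complex of dimension $1$ with finitely many $G$-orbits of cells, the quotient is compact, so the action is cocompact.

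Finally, for the fixed point sets: given $H\in\mathcal H$, the cone point $v_{H}$ associated to the coset $1\cdot H=H$ is fixed by $H$, because for $h\in H$ we have $h.H=H$ as a coset, so $h.v_H=v_{hH}=v_H$. Hence $X^H\ni v_H$ is nonempty for every $H\in\mathcal H$. Assembling these three facts, the action is geometric relative to $\mathcal H$.

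The only step requiring genuine care is the proper discontinuity argument, and specifically the passage from the bi-Lipschitz statement on $G\subset X$ (asserted in the Remark) to a bound on $\{g\in G\mid d_X(x_*,g.x_*)\le r\}$; one must be slightly careful that geodesics in $X$ between two group vertices may route through cone points, but since every vertex is at distance at most $1$ from a group vertex, a path of length $r$ in $X$ between group vertices can be replaced by a path of length at most $3r$ (say) passing only through group vertices, giving a bound on the word length relative to $\mathcal H$ linear in $r$. After that, Lemma~\ref{lem:criteria-p.d.action} does the rest. I do not expect any serious obstacle; the statement is essentially a bookkeeping consequence of the definitions and the preceding lemma, and indeed it is labeled a corollary.
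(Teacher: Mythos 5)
Your proof is essentially correct and does exactly what one would expect for this unproved corollary: verify proper discontinuity via Lemma~\ref{lem:criteria-p.d.action} together with the bi-Lipschitz equivalence asserted in the Remark following Definition~\ref{def:rel-p.d.actionII}, verify cocompactness by counting the finitely many $G$-orbits of vertices and edges, and exhibit $v_H\in X^H$. The paper leaves the corollary unproved, so there is no argument to compare against, but yours is the natural unwinding of the definitions.

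One remark on your closing caveat. The worry you raise there is not actually a worry, and the fix you propose is false. The quantity $\dist_{\hat\Gamma}(1,g)$ is by definition the infimum over all paths, cone points included, so once you have cited the Remark's bi-Lipschitz statement between $\dist_{\hat\Gamma}$ restricted to $G$ and the word metric on $S\cup\bigcup\mathcal H$, you are finished: $\{g\in G\mid d_X(x_*,g.x_*)\le r\}$ has diameter at most $2r$ in $\dist_{\hat\Gamma}$, hence bounded diameter in the word metric, and Lemma~\ref{lem:criteria-p.d.action} applies. There is no need to, and indeed no way to, replace a length-$r$ path passing through a cone point $v_{aH}$ by a path of length $O(r)$ through group vertices only: traversing from $a$ to $b$ with $a^{-1}b\in H$ via the cone point costs $2$ in $\hat\Gamma$, but a path avoiding $v_{aH}$ may be arbitrarily long in the $S$-Cayley graph. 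The correct mechanism behind the Remark's bi-Lipschitz equivalence is that such a length-$2$ detour through a cone point corresponds to a single letter of $\bigcup\mathcal H$ in the word metric, not to a short $S$-path. Since you cite the Remark rather than relying on this incorrect replacement, the proof stands, but the closing paragraph should be deleted or corrected.
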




\subsection{A Relative Schwarz-Milnor lemma}

\begin{lemma}\label{lem:RelativeProperAction}
 If $G$ acts geometrically on $X$ relative to $\mathcal{H}$, 
 then there is a bounded open subset $U$ such that $G.U=X$, and in particular  
 $\{g\in G\colon U\cap g.U\neq \emptyset\}$ 
 has a finite diameter relative to $\mathcal{H}$.
\end{lemma}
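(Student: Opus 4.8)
The plan is to produce the desired bounded open set $U$ as a finite union of metric balls and then invoke cocompactness together with Lemma~\ref{lem:criteria-p.d.action} to control the set $\{g\in G\colon U\cap g.U\neq\emptyset\}$. First I would fix a basepoint $x_*\in X$ and use cocompactness of the $G$-action: there is a compact set $C\subseteq X$ with $G.C=X$. Since $C$ is compact (hence bounded), choose $R>0$ with $C\subseteq B(x_*,R)$; then $G.B(x_*,R)=X$ as well, because $g.C\subseteq g.B(x_*,R)$ for every $g\in G$. Set $U:=B(x_*,R+1)$, an honest bounded open subset of $X$, and note $G.U\supseteq G.\overline{C}=X$, so $G.U=X$. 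This handles the first assertion.

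For the second assertion I would argue as follows. Suppose $g\in G$ satisfies $U\cap g.U\neq\emptyset$. Pick $y\in U\cap g.U$; then $d_X(x_*,y)<R+1$ and $d_X(g.x_*,y)=d_X(x_*,g^{-1}.y)<R+1$, so by the triangle inequality $d_X(x_*,g.x_*)<2(R+1)$. Thus
\[
\{g\in G\colon U\cap g.U\neq\emptyset\}\subseteq\{g\in G\colon d_X(x_*,g.x_*)\leq 2(R+1)\}.
\]
Since the $G$-action on $X$ is properly discontinuous relative to $\mathcal{H}$, Lemma~\ref{lem:criteria-p.d.action} (applied with this basepoint $x_*$ and radius $r=2(R+1)$) tells us the right-hand set has finite diameter relative to $\mathcal{H}$; hence so does the subset on the left, being contained in a set of finite relative diameter. (Here I use the obvious fact that a subset of a set of finite diameter relative to $\mathcal{H}$ again has finite diameter relative to $\mathcal{H}$, with the same finite set $S$.)

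The only mild subtlety — and the step I would be most careful about — is the direction in which Lemma~\ref{lem:criteria-p.d.action} is applied: that lemma gives ``properly discontinuous relative to $\mathcal{H}$'' $\Rightarrow$ ``for every $x_*$ and every $r$, the ball-orbit set has finite relative diameter'' as the \emph{forward} implication (the proof of that lemma shows the reverse), so I should make sure I am invoking the equivalence in the usable direction, which is exactly the easy direction asserted there. Everything else is routine: no properties of $\mathcal{H}$ beyond the definition of finite diameter relative to $\mathcal{H}$ are needed, and the non-emptiness of fixed point sets $X^H$ plays no role in this particular lemma.
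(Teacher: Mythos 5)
Your proof is correct, and the construction of $U$ follows essentially the same path as the paper's: cocompactness gives a compact $C$ with $G.C=X$, and one enlarges $C$ to a bounded open set $U$ with $G.U=X$. (The paper covers $C$ by finitely many small bounded open sets and takes their union; you use a single large ball about a basepoint — either works, and the ball is if anything cleaner.)

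Where you over-complicate is the ``in particular'' clause. Once $U$ is a bounded open subset, the set $\{g\in G: U\cap g.U\neq\emptyset\}$ has finite diameter relative to $\mathcal{H}$ immediately from Definition~\ref{def:rel-p.d.actionII}, since that is exactly what proper discontinuity relative to $\mathcal{H}$ asserts for \emph{every} bounded open subset; the paper treats this as a one-line consequence and says no more. Your detour through the basepoint $x_*$, the triangle inequality, and Lemma~\ref{lem:criteria-p.d.action} is unnecessary, and it also introduces the mild logical wrinkle you half-noticed yourself: Lemma~\ref{lem:criteria-p.d.action} is stated as ``properly discontinuous $\iff$ \emph{there exists} $x_*$ such that $\ldots$'', so its forward direction does not literally hand you the conclusion for your chosen $x_*$ — you would need to observe separately that the ball-orbit condition is basepoint-independent. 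None of this makes your argument wrong, but it trades a one-line appeal to the definition for extra machinery and an avoidable quantifier subtlety.
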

\begin{proof}
 Since the $G$-action on $X$ is cocompact, 
 there exists a compact subset  $K$ of $X$ such that $G.K=X$.
 Since the action is properly discontinuous relative 
 to $\mathcal{H}$, for each $x\in K$ there is a bounded 
 open subset $U_x$ such that 
 the set $\{g\in G\colon U_x\cap g.U_x\neq\emptyset\}$ 
 has a finite diameter relative to $\mathcal{H}$.
 By compactness, $K$ is contained in a  
 finite union $U=U_{x_1}\cup \cdots \cup U_{x_\ell}$. 
 Since each $U_{x_i}$ is bounded, $U$ is bounded.  
\end{proof}

\begin{proposition}[Relative Schwarz-Milnor lemma]\label{prop:relative-Schwartz-Milnor}
Let $G$ be a group and $\mathcal{H}$ a finite collection of subgroups. If $G$ acts geometrically relative to $\mathcal{H}$ on a geodesic metric space $X$ then:
\begin{enumerate}
    \item $G$ is finitely generated relative to $\mathcal{H}$; and
    \item if $S$ is a finite relative generating set of $G$ with respect to $\mathcal{H}$, then there is a quasi-isometry between the coned-off Cayley graph 
          $\hat\Gamma(G,\mathcal{H},S)$ and $X$. 
\end{enumerate}
\end{proposition}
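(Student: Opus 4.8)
The plan is to prove this as a relative version of the classical Schwarz--Milnor (Milnor--Švarc) lemma, following the template of that proof but replacing ``bounded/compact'' statements about displacement with ``finite diameter relative to $\mathcal{H}$'' statements, and using the coned-off Cayley graph in place of an ordinary Cayley graph. First I would fix a basepoint $x_* \in X$ and, by Lemma~\ref{lem:RelativeProperAction}, choose a bounded open set $U$ with $G.U = X$ and with $x_* \in U$; set $D = \diam(U) < \infty$ and $S_0 = \{g \in G : U \cap g.U \neq \emptyset\}$, which has finite diameter relative to $\mathcal{H}$, witnessed by some finite set $S_1 \subset G$ so that $S_0$ has bounded word length in $S_1 \cup \bigcup\mathcal{H}$. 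The first claim, that $G$ is finitely generated relative to $\mathcal{H}$, would follow by the standard connectedness-of-$X$ argument: given $g \in G$, a path in $X$ from $x_*$ to $g.x_*$ can be subdivided into segments of length $\le D$, each lying in a single $G$-translate of $U$, so $g$ is a product of at most (length of path)$/D$ elements of $S_0$; hence $S_0$, and therefore the finite set $S := S_1 \cup (S_0 \cap \langle S_1 \cup \bigcup\mathcal{H}\rangle \text{-bounded part})$ — more carefully, any finite $S$ containing a relative generating witness — relatively generates $G$. I would phrase this cleanly: $S_0 \subseteq$ ball of fixed radius in the relative word metric, so $G = \langle S_1 \cup \bigcup\mathcal{H}\rangle$ and $S_1$ is a finite relative generating set.

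For the quasi-isometry, I would define the natural map $\phi\colon \hat\Gamma(G,\mathcal{H},S) \to X$ on vertices by $g \mapsto g.x_*$ (vertices of the coned-off graph being elements of $G$, together with cone vertices; send each cone vertex over a coset $gH$ to $g.x_*$, using that $X^H \neq \emptyset$ so that $H$ moves $x_*$ a uniformly bounded amount once we replace $x_*$ by a point in a fixed compact fundamental domain). The two inequalities to check are the Lipschitz-type bound $d_X(\phi(u),\phi(v)) \le A\, \dist_{\hat\Gamma}(u,v) + B$ and the reverse bound $\dist_{\hat\Gamma}(u,v) \le A'\, d_X(\phi(u),\phi(v)) + B'$, plus coarse surjectivity (immediate since $G.U = X$ and $U$ is bounded). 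The Lipschitz bound: an edge of $\hat\Gamma$ from $g$ to $gs$ with $s \in S_1$ contributes $d_X(g.x_*, gs.x_*) = d_X(x_*, s.x_*) \le \max_{s\in S_1} d_X(x_*,s.x_*)$, and an edge through a cone point over $gH$ joins $gh_1.x_*$ to $gh_2.x_*$ with $h_i \in H$, a distance bounded by $2\diam(X^H$-neighborhood$)$ — here one wants the finite collection $\mathcal{H}$ and boundedness of how far each $H$ moves the chosen basepoint, which holds because we pick the basepoint in a compact set meeting every relevant fixed set. The reverse bound is the heart of the matter: given $d_X(g.x_*, g'.x_*) = t$, take a geodesic, subdivide into $\lceil t/D\rceil$ pieces each inside some $\gamma_i.U$, deduce $\gamma_{i}^{-1}\gamma_{i+1} \in S_0 \cdot S_0$ (since consecutive translates of $U$ overlap along the geodesic), hence has relative word length $\le$ const, and conclude $\dist_{\hat\Gamma}(g,g') \le \text{const}\cdot \lceil t/D \rceil$.

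The step I expect to be the main obstacle is the careful bookkeeping around the cone vertices and the condition ``for every $H\in\mathcal{H}$ the fixed point set $X^H$ is non-empty'': one must verify that the chosen basepoint (or rather, for each coset, a uniformly-close translate of a basepoint) is moved a uniformly bounded distance by each peripheral subgroup, so that collapsing a coset $gH$ to a single cone vertex is compatible, up to bounded error, with the geometry of $X$ — and that in the reverse direction, a long walk in $X$ that ``uses'' the action of peripheral elements is charged at unit cost in $\hat\Gamma$ exactly as the relative word metric does. Concretely, the link between $\dist_{\hat\Gamma}$ and the word metric on $S_1 \cup \bigcup\mathcal{H}$ (noted in the Remark after Definition~\ref{def:rel-p.d.actionII}) lets me transfer the finite-diameter-relative-to-$\mathcal{H}$ property of $S_0$ into a genuine bound $\dist_{\hat\Gamma}(1, s) \le N$ for all $s \in S_0$, and this $N$ is the key constant feeding the reverse quasi-isometry inequality. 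Once that bound is in hand, the rest is the classical argument verbatim. I would also remark that this proposition, combined with the observation that a geodesic coarsely convex $X$ remains so under quasi-isometry with controlled constants, yields that geodesic coarsely convex pairs are coarsely convex, as claimed after Definition~\ref{def:group_pair_is}.
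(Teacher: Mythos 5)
Your proposal follows the same route as the paper's proof: invoke Lemma~\ref{lem:RelativeProperAction} to get a bounded $U$ with $G.U=X$, use relative proper discontinuity to obtain a set of ``small-displacement'' elements with finite diameter $N$ in the coned-off graph, subdivide a geodesic from $x_*$ to $g.x_*$ to deduce the linear bound $\dist_{\hat\Gamma}(1,g)\lesssim d_X(x_*,g.x_*)$, and then use the orbit map for the reverse Lipschitz bound and coarse surjectivity. The one place to tighten is the image of a cone vertex: $v_{gH}\mapsto g.x_*$ is not well-defined per coset, whereas the paper sends $v_{gH}\mapsto g.x_H$ for a chosen $x_H\in X^H$ --- well-defined precisely because $H$ fixes $x_H$, and then every edge from $gh$ to $v_{gH}$ has displacement exactly $d_X(x_*,x_H)$, giving the uniform Lipschitz constant you were reaching for with your ``$X^H$-neighborhood'' heuristic.
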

\begin{proof}
Let $x\in X$ and let $U_r$ be an open ball of radius $r$ and center $x$ in $X$ satisfying the conclusion of Lemma~\ref{lem:RelativeProperAction}. Hence, any ball $U_R$ of radius with $R\geq r$ and center $x$ satisfies $G. U_R =X$. Without loss of generality, assume that $r>1$.
Since the $G$-action is properly discontinuous relative to $\mathcal{H}$, there exist a finite subset $S\subset G$ such that the diameter of 
the set
\begin{align*}
 V=\{g\in G\colon U_{2r}\cap g.U_{2r} \neq \emptyset\}
\end{align*}
measured by the word length with respect to $S\cup \bigcup \mathcal{H}$
is finite. We claim that $S$ is a finite relative generating set of 
$G$ with respect to $\mathcal{H}$.

Consider the coned-off Cayley graph $\hat\Gamma(G,\mathcal{H},S)$ with  the graph metric $\dist_{\hat\Gamma}$.
The diameter of $V$ measured by $\dist_{\hat\Gamma}$ is also finite,
so we let $N$ be this value.

Now we mimic the proof of the Schwarz-Milnor's lemma. 
Let $g\in G$. If $g.x=x$ then $g\in V$ by definition. In particular, $g$ is a
product of elements of $S$ and $\bigcup \mathcal H$; moreover, note that
$\dist_{\hat\Gamma}(1,g) \leq N$ if $g\in V$.  
Otherwise, suppose $x\neq g.x$ 
and consider a geodesic $\gamma\colon [0,1]\to X$ between $x$ and
$g.x$. Then there is a finite sequence of real numbers
$t_0=0<t_1<\cdots<t_\ell=1$ and a sequence $1=f_0,f_1,\ldots , f_\ell=g$
of elements of $G$ such that $\dist_X(\gamma(t_i),\gamma(t_{i+1}))<r$
and $\gamma(t_i) \in f_i.U_r$. 
Note that we can assume that $\ell\leq \frac{2}{r}\dist_X(x,g.x) + 1$. 
In particular, $f_i.U_{2r}\cap f_{i+1}.U_{2r} \neq \emptyset$ and hence 
$f_i^{-1}f_{i+1}$ belongs to $V$ and 
$\dist_{\hat\Gamma}(1,f_i^{-1}f_{i+1}) 
= \dist_{\hat\Gamma}(f_i,f_{i+1})\leq N$.


Since $f_0=1$  and $f_\ell=g$, 
it follows that $g$ is a product of elements in $S$ and $\bigcup \mathcal{H}$. %
Specifically, we have  
\begin{align*}
  g= (f_1^{-1}f_{2}) \cdots (f_{(l-1)}^{-1}f_{l})
\end{align*}
which in $\hat\Gamma(G,\mathcal{H},S)$ describes a path of length 
at most $N(\ell-1)$ from $1$ to $g$. In other word, we have
\begin{align*}
 \dist_{\hat\Gamma}(1, g) \leq \sum_{i=1}^{l-1}\dist_{\hat\Gamma}(f_i,f_{i+1})
 \leq N(l-1)
\end{align*}

Therefore,
\[ \dist_{\hat\Gamma}(1, g) \leq \frac{2N}{r} \dist_X(x, g.x),\]
if $x\neq g.x$. In general, 
\[ \dist_{\hat\Gamma}(1, g) \leq \frac{2N}{r} \dist_X(x, g.x)+N,\]
for any $g\in G$.

For each $H\in \mathcal{H}$, let $x_H\in X$ be a point with $G$-stabilizer equal to $H$. Let $\Phi\colon \hat\Gamma(G,\mathcal{H},S) \to X$ be given by $g\mapsto g.x$ and $gH\mapsto g.x_H$. Let $\lambda=\max\{\dist_X(x,s.x) \colon s\in S\}\cup \{\dist_X(x,x_H)\colon H\in \mathcal{H}\}.$ Then it is an observation that 
\[ \dist_X (x,g.x) \leq \lambda \dist_{\hat\Gamma}(1,g). \]

Since $U_r$ is a ball of radius $r$, $\bigcup_{g\in G} g.U_R=X$ and $\Phi(1)=x\in U_r$, it follows that $\Phi$ is a quasi-isometry.   
\end{proof}

The proof of the above proposition has the following consequence.  
\begin{corollary}
If $G$ acts geometrically relative to a finite collection of subgroups
$\mathcal{H}$ on a geodesic metric space $X$, then $G$-orbits in $X$ are
discrete.
\end{corollary}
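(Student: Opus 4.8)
The plan is to read the corollary off the quantitative estimate established inside the proof of \cref{prop:relative-Schwartz-Milnor}, rather than off its quasi-isometry conclusion: since a stabilizer $G_x$ may be infinite, the orbit map $g\mapsto g.x$ is in general very far from injective, so the inequality is what is needed and not merely the fact that $\Phi$ is a quasi-isometry. Explicitly, the statement I would prove is that for every $x\in X$ and every $R\ge 0$ the set $W_R(x)=\{g\in G\colon \dist_X(x,g.x)\le R\}$ has finite diameter relative to $\mathcal{H}$; when $\mathcal{H}=\emptyset$ this set is actually finite, so this recovers the usual statement that $G$-orbits are discrete.

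First I would fix $x\in X$ and take $r>1$, the finite subset $S\subset G$, and the constant $N$ exactly as produced in the proof of \cref{prop:relative-Schwartz-Milnor}, so that $S$ is a finite relative generating set of $G$ with respect to $\mathcal{H}$ and, in the coned-off Cayley graph $\hat\Gamma(G,\mathcal{H},S)$,
\begin{align*}
 \dist_{\hat\Gamma}(1,g)\ \le\ \frac{2N}{r}\,\dist_X(x,g.x)+N\qquad\text{for every }g\in G.
\end{align*}
Next, for a fixed $R\ge 0$, every $g\in W_R(x)$ satisfies $\dist_{\hat\Gamma}(1,g)\le \frac{2N}{r}R+N$, so $W_R(x)$ is contained in a ball of $\hat\Gamma$ about $1$ and in particular has finite $\dist_{\hat\Gamma}$-diameter. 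Since $\dist_{\hat\Gamma}$ is bi-Lipschitz equivalent to the word metric with respect to $S\cup\bigcup\mathcal{H}$, as recorded in the remark following \cref{def:rel-p.d.actionII}, the set $W_R(x)$ has finite diameter relative to $\mathcal{H}$. As $x\in X$ and $R\ge 0$ were arbitrary, this holds for every orbit.

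I do not expect a genuine obstacle here; the one point needing care is the one flagged at the start, namely that one must use the explicit distance estimate extracted from the proof of \cref{prop:relative-Schwartz-Milnor} rather than the quasi-isometry $\Phi$ itself, because the orbit map is typically many-to-one. The constants $r$, $S$, and $N$ depend on the chosen base point $x$, but this is harmless since each orbit may be treated with its own base point, and the remaining content is just the displayed inequality combined with the bi-Lipschitz equivalence of $\dist_{\hat\Gamma}$ with the relative word metric.
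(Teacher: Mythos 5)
Your argument is internally fine as far as it goes, but it proves a different statement from the one in the corollary, and the difference is exactly the point. The corollary asserts that each orbit $G.x$ is a discrete subset of $X$, i.e.\ that there is $\epsilon>0$ with $d_X(x,g.x)\ge \epsilon$ whenever $g.x\ne x$. What you establish --- that each set $W_R(x)=\{g\in G: d_X(x,g.x)\le R\}$ has finite diameter relative to $\mathcal{H}$ --- does not imply this once $\mathcal{H}$ contains infinite subgroups, because relative diameter is blind to precisely the phenomenon being ruled out. For instance, relative to $\mathcal{H}=\{G\}$ every subset of $G$ has diameter at most $2$, so your conclusion holds vacuously for any isometric cocompact action with $X^G\ne\emptyset$ (take $\mathbb{Z}^2$ acting on the closed unit disc by two rotations about the centre generating a dense subgroup of $SO(2)$); there the orbits are dense in circles, so ``finite relative diameter of bounded-displacement sets'' and ``discrete orbit'' genuinely come apart. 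Your own remark that the reformulation ``recovers the usual statement'' only when $\mathcal{H}=\emptyset$ concedes this: the corollary is stated for a general finite $\mathcal{H}$, and in that generality your surrogate statement is strictly weaker than, and does not yield, discreteness.

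The paper's intended proof is different: it reads discreteness off the \emph{constant-free} estimate asserted in the proof of Proposition~\ref{prop:relative-Schwartz-Milnor} for the case $g.x\ne x$, namely $\dist_{\hat\Gamma}(1,g)\le \frac{2N}{r}\,d_X(x,g.x)$ (no additive $N$), obtained from the telescoping $g=(f_1^{-1}f_2)\cdots(f_{\ell-1}^{-1}f_\ell)$ with $\ell-1\le \frac{2}{r}d_X(x,g.x)$. Since distinct vertices of $\hat\Gamma$ are at distance at least $1$, this gives $d_X(x,g.x)\ge r/(2N)$ for every $g$ with $g.x\ne x$, i.e.\ the orbit is uniformly $r/(2N)$-separated, which is the claimed discreteness. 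Your choice to use only the $+N$ form is exactly why no separation can be extracted in your write-up; the additive constant you were wary of is where the entire content of the corollary sits. (It is fair to say that this constant-free step deserves scrutiny --- as written the telescoping needs $f_1$ to be taken equal to $1$, and for elements of very small nonzero displacement, or for degenerate peripheral structures such as $\mathcal{H}=\{G\}$ as in the disc example, the claimed separation can fail --- but that is an issue with the paper's statement and its stated generality; replacing the conclusion by the relative-diameter statement changes the corollary rather than proving it.)
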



The following lemma is used in the proof of 
\cref{thm:CombinationFine}~(\ref{item:Amalgamation-relative-p.d}).


\begin{lemma}
\label{lem:rel-p.d.-coarse-embedding}
 Suppose the action of $G$ on a metric space $X$ is 
 proper discountiuous relative to $\mathcal{H}$.
 Let $S\subset G$ be a finite
 relative generating set of $G$ and let $\hat\Gamma(G,\mathcal{H},S)$ be 
 the coned-off Cayley graph. We fix a base point $x_*\in X$.

 Then there exists a non-decreasing 
 function $\Lambda=\Lambda[x_*,S]\colon \R_{\geq 0}\to \N$
 such that for $x\in X$ and $g\in G$, 
 we have
 \begin{align*}
  \dist_{\hat \Gamma}(1,g)\leq \Lambda[x_*,S](2d_X(x_*,x) + d_{X}(x,g.x))
 \end{align*}
\end{lemma}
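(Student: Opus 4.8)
The strategy is to reduce the statement to the estimate already obtained in the proof of the Relative Schwarz-Milnor lemma (Proposition~\ref{prop:relative-Schwartz-Milnor}). The obstacle compared to that proof is that here the action is only properly discontinuous relative to $\mathcal{H}$, not cocompact, so there is no single bounded open set $U$ with $G.U=X$; a geodesic from $x$ to $g.x$ need not stay near the orbit $G.x_*$. The idea is to first travel from the base point $x_*$ to $x$, then from $x$ to $g.x$, then from $g.x$ back to $g.x_*$, so that the whole trip is controlled by $2d_X(x_*,x)+d_X(x,g.x)$, while staying within a ball about $x_*$ whose radius is itself bounded by the same quantity.

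\textbf{Key steps.} First I would fix, for the given base point $x_*$ and each $r>0$, a bounded open set: namely take $U_r = B(x_*, r)$ the open ball of radius $r$, and for each $R>0$ let $S_R\subset G$ be a finite set (depending on $x_*$ and $R$) such that the diameter relative to $\mathcal{H}$ of $V_R = \{g\in G : B(x_*,R)\cap g.B(x_*,R)\neq\emptyset\}$ is finite; this uses Definition~\ref{def:rel-p.d.actionII} directly. However, since we must produce a single function $\Lambda$ for the \emph{given} finite relative generating set $S$, I would instead argue as follows: for any $\rho>0$, cover the ball $B(x_*,\rho)$ by finitely many balls of radius $1$ and, by relative proper discontinuity together with a compactness-type argument as in Lemma~\ref{lem:RelativeProperAction} (applied to the bounded set $B(x_*,\rho)$ rather than to a fundamental domain), obtain that $\{g\in G : B(x_*,\rho)\cap g.B(x_*,\rho)\neq\emptyset\}$ has finite diameter relative to $\mathcal{H}$; measured in $\hat\Gamma(G,\mathcal{H},S)$ this gives a finite value $N(\rho)$, and $N$ can be taken non-decreasing in $\rho$.

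\textbf{The main computation.} Given $x\in X$ and $g\in G$, set $\rho = 2d_X(x_*,x)+d_X(x,g.x)+1$. Concatenate a geodesic from $x_*$ to $x$, a geodesic from $x$ to $g.x$, and (the $g$-translate of) a geodesic from $x_*$ to $x$ reversed — i.e.\ a geodesic from $g.x$ to $g.x_*$ — to obtain a path $\gamma$ from $x_*$ to $g.x_*$ of length at most $2d_X(x_*,x)+d_X(x,g.x) < \rho$; in particular $\gamma$ stays inside $B(x_*,\rho)$. Then subdivide $\gamma$ as in the proof of Proposition~\ref{prop:relative-Schwartz-Milnor}: pick $0=t_0<t_1<\cdots<t_\ell=1$ and $1=f_0,f_1,\dots,f_\ell=g$ with $d_X(\gamma(t_i),\gamma(t_{i+1}))<1$, $\gamma(t_i)\in f_i.B(x_*,1)$, and $\ell \le 2\,\mathrm{length}(\gamma)+1 \le 2\rho$. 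Since $f_i, f_{i+1}$ satisfy $f_i.B(x_*,1)\cap f_{i+1}.B(x_*,1)\subset B(x_*,\rho)$ (as the relevant points lie in $B(x_*,\rho)$), a translate of this containment shows $f_i^{-1}f_{i+1}\in \{h : B(x_*,\rho)\cap h.B(x_*,\rho)\neq\emptyset\}$, hence $\dist_{\hat\Gamma}(f_i,f_{i+1}) = \dist_{\hat\Gamma}(1,f_i^{-1}f_{i+1})\le N(\rho)$. Summing, $\dist_{\hat\Gamma}(1,g)\le \ell\cdot N(\rho)\le 2\rho\, N(\rho)$, which is a non-decreasing function of $\rho$ and hence of $2d_X(x_*,x)+d_X(x,g.x)$. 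Defining $\Lambda[x_*,S](u) := \lceil 2(u+1)\,N(u+1)\rceil$ gives the desired bound. The one point requiring care is the argument that $\{h : B(x_*,\rho)\cap h.B(x_*,\rho)\neq\emptyset\}$ has finite diameter relative to $\mathcal{H}$ for every $\rho$, with a bound one can take monotone in $\rho$ — this is essentially Lemma~\ref{lem:RelativeProperAction} with the fundamental domain replaced by a large ball, plus the remark that a finite word-metric diameter relative to $\mathcal{H}$ translates to a finite $\dist_{\hat\Gamma}$-diameter.
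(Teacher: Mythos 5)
Your proof has a genuine gap in the main computation. You correctly identify in your plan that the obstacle is the lack of cocompactness — ``a geodesic from $x$ to $g.x$ need not stay near the orbit $G.x_*$'' — but you then run a Schwarz–Milnor subdivision that requires exactly that. Specifically, when you subdivide the concatenated path $\gamma$ and ``pick $1=f_0,f_1,\dots,f_\ell=g$ with $\gamma(t_i)\in f_i.B(x_*,1)$,'' such $f_i$ need not exist: properly discontinuous relative to $\mathcal{H}$ gives no control over the coarse density of $G.x_*$, so the translates $g.B(x_*,1)$ may fail to cover the image of $\gamma$. Covering $B(x_*,\rho)$ by finitely many metric balls of radius~$1$ does not help, since those balls are not required to be $G$-translates of $B(x_*,1)$. (The secondary issue — that this argument implicitly assumes $X$ is geodesic, while the lemma is stated for a general metric space — is minor in context, but still worth noting: the paper's proof avoids it.)

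The correct argument is substantially shorter and avoids the subdivision entirely. Define
\[
  \Lambda(r) \;=\; \max\bigl\{\dist_{\hat\Gamma}(1,h)\;:\;h\in G,\ \bar B(x_*,r)\cap h.\bar B(x_*,r)\neq\emptyset\bigr\},
\]
which is finite by relative proper discontinuity (a closed ball sits inside a bounded open set, and a finite diameter with respect to some finite set $S'\cup\bigcup\mathcal{H}$ translates to a finite $\dist_{\hat\Gamma}$-diameter since $S$ generates $G$ relative to $\mathcal{H}$); $\Lambda$ is clearly non-decreasing. Now for $x\in X$, $g\in G$, the triangle inequality and the fact that $G$ acts by isometries give $d_X(x_*,g.x_*)\le d_X(x_*,x)+d_X(x,g.x)+d_X(g.x,g.x_*)=2d_X(x_*,x)+d_X(x,g.x)=:r_{x,g}$. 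Hence $x_*\in\bar B(x_*,r_{x,g})\cap g.\bar B(x_*,r_{x,g})$, so $g$ lies in the set defining $\Lambda(r_{x,g})$, and $\dist_{\hat\Gamma}(1,g)\le\Lambda(r_{x,g})$. No path, no subdivision, no cocompactness.
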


\begin{proof}
 For $r\geq 0$, the set 
 $\{g\in G \mid \bar{B}(x_*,r)\cap g.\bar{B}(x_*,r)\neq \emptyset\}$
 has a finite diameter relative to $\mathcal{H}$. Here $\bar{B}(x_*,r)$
 denote the closed ball of radius $r$ centerd at $x_*$.
 Thus, we define
 \begin{align*}
  \Lambda(r)=\Lambda[x_*,S](r) = \max\{\dist_{\hat \Gamma}(1,g)\mid 
  g\in G,\: \bar{B}(x,r)\cap g.\bar{B}(x,r)\neq \emptyset
  \}
 \end{align*}
 For $x\in X$ and $g\in G$, we have
 \begin{align*}
  d_X(x_*,g.x_*)\leq d_{X}(x_*,x) + d_{X}(x,g.x) + d_{X}(g.x,g.x_*)
  = 2d_X(x_*,x) + d_{X}(x,g.x)
 \end{align*}
 Set $r_{x,g}= 2d_X(x_*,x) + d_{X}(x,g.x)$. 
 Then $\bar{B}(x_*,r_{x,g})\cap g.\bar{B}(x_*,r_{x,g}) \neq \emptyset$. 
 It follows that
\[    \dist_{\hat \Gamma}(1,g) \leq \Lambda[x_*,S](r_{x,g})   \qedhere
\]
\end{proof}


\section{Non-positively curved spaces and groups}

\subsection{(weakly) semihyperbolic groups}
Let $(X,d_X)$ be a metric space.
For $\lambda \geq1$ and $k \geq 0$,   a $(\lambda,k)$-\textit{quasi-geodesic bicombing} on $X$ is a map $\gamma : X \times X \times [0,1] \to X$ 
such that for $x,y \in X$, the map $\gamma(x,y)(\cdot):[0,1] \to X$ satisfies $\gamma(x,y)(0)=x$, $\gamma(x,y)(1)=y$, and 
\begin{align*} 
\lambda^{-1}|t-s|d_X(x,y) - k \leq d_X(\gamma(x,y)(t), \gamma(x,y)(s)) \leq \lambda |t-s|d_X(x,y) +k \ \ \ \ (t,s \in [0,1]). 
\end{align*}
In the case that for all $x,y\in X$, the path $\gamma(x,y)(\cdot):[0,1] \to X$ satisfies 
\begin{align*}
d_X(\gamma(x,y)(t), \gamma(x,y)(s)) = |t-s| d_X(x,y) \ \ \ \ (t,s \in [0,1]), 
\end{align*}
we say that   $\gamma : X \times X \times [0,1] \to X$ is a \textit{geodesic bicombing}. 
If the space  $X$ admits a $(\lambda,k)$-quasi-geodesic bicombing for some $\lambda \geq 1$ and $k>0$,
we say that $X$ admits a quasi-geodesic bicombing. 

\begin{definition}\label{def:EquivBicombing}
Let $X$ be a geodesic metric space with a $G$-action by isometries, and a geodesic bicombing $\gamma \colon X \times X \times [0,1] \to X$. We shall say that $\gamma$ is $G$-equivariant if     
\begin{align*}
g \cdot \gamma(x,y)(t)=\gamma(gx,gy)(t)
\end{align*}
for any $g \in G$,   $x,y \in X$, and  $t \in [0,1]$.
\end{definition}

Gromov introduced the notion of non-positively curved groups, called semihyperbolic groups.
Alonso and Bridson formulated an appropriate definition of semihyperbolicity. 
We review semihyperbolicity defined by Alonso and Bridson.
\begin{definition}
A $(\lambda,k)$-quasi-geodesic bicombing $\gamma : X \times X \times [0,1] \to X$ is a $(\lambda,k,c_1,c_2)$-\textit{bounded bicombing} 
if there exist $c_1 \geq 1$ and $c_2 \geq 0$ such that for any $x_1, x_2, y_1, y_2\in X$ and for any $t \in [0,1]$,
\begin{align*} 
d_X(\gamma(x_1,y_1)(t), \gamma(x_2,y_2)(t)) \leq c_1 \max \{d_X(x_1,x_2), d_X(y_1,y_2)\} + c_2
\end{align*}
holds.  
%
A metric space is a \textit{semihyperbolic space} if it admits a $(\lambda,k,c_1,c_2)$-bounded bicombing for some constants $\lambda,k,c_1,c_2$.
\end{definition}

Semihyperbolic groups have several algebraic properties. 
See \cite{AB95} for details. 

\subsection{Coarsely convex spaces and groups}
Fukaya and Oguni \cite{FO20} introduced the class of coarsely convex spaces as a coarse geometric analogue of the class of nonpositively curved Riemannian manifolds.
Being coarsely convex is invariant under coarse equivalence and closed under direct products. 
In general, this important property does not hold in CAT(0) spaces. 
We reformulate coarsely convexity by using the notion of bicombings. 
\begin{definition}
\label{def:ccbicombing}
Let $\lambda\geq 1$, $k \geq 0$, $E\geq 1$,
and $C\geq 0$ be constants. 
Let $\theta\colon \Rp\to\Rp$ be a non-decreasing function.
A $(\lambda,k,E,C,\theta)$
-\textit{coarsely convex bicombing} 
on a metric space $(X,d_X)$ is a $(\lambda,k)$-quasi-geodesic bicombing 
$\gamma \colon X \times X \times [0,1] \to X$ with the following:
\begin{enumerate}
 \item \label{qconvex}
       Let $x_1,x_2,y_1,y_2 \in X$ and let $a,b \in [0,1]$.
       Set $y_1':=\gamma(x_1,y_1)(a)$ and $y_2':=\gamma(x_2,y_2)(b)$.
       Then, for $c \in [0,1]$, we have
 \begin{align*} 
  d_X(\gamma(x_1,y_1)(ca), \gamma(x_2,y_2)(cb)) 
  \leq (1-c)Ed_X(x_1,x_2) + cEd_X(y_1',y_2') + C.
 \end{align*}
 \item \label{qparam-reg}       
       Let $x_1,x_2,y_1,y_2 \in X$.
       Then for $t,s\in [0,1]$ 
       we have
       \begin{align*}
	\abs{t\ds{x_1,y_1} - s\ds{x_2,y_2}} \leq 
        \theta(\ds{x_1,x_2}+\ds{\gamma(x_1,y_1,t),\gamma(x_2,y_2,s)}).
       \end{align*}
\end{enumerate} 
We say that $X$ is a coarsely convex space if
it admits a $(\lambda,k,E,C,\theta)$
-coarsely convex bicombing
for some $\lambda \geq 1$, $k \geq 0$, $E \geq 1$, $C>0$, and $\theta$.
We call a group which act on a coarsely convex space geometrically a \emph{coarsely convex group}.
\end{definition}

In this paper, we mainly consider geodesic bicombings. In this case,
we can omit (\ref{qparam-reg}) of \cref{def:ccbicombing} by
the triangle inequality.

\begin{lemma}
\label{lem:cc3-tri} 
 Let $(X,d_X)$ be a metric space.  Let
 $\gamma \colon X \times X \times [0,1] \to X$ 
 be a geodesic bicombing on $X$. Then for 
 $x_1,x_2,y_1,y_2 \in X$ and $t,s \in [0,1]$, we have
 \begin{align*}
  \abs{t d_X(x_1,y_1) -s d_X(x_2,y_2)}
  \leq d_X(x_1,x_2) + d_X(\gamma(x_1,y_1)(t),\gamma(x_2,y_2)(s)).
 \end{align*}
\end{lemma}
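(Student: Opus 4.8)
The plan is to prove Lemma~\ref{lem:cc3-tri} by a direct application of the triangle inequality, using the defining property of a geodesic bicombing, namely that along each combing path distances scale linearly. Fix $x_1,x_2,y_1,y_2\in X$ and $t,s\in[0,1]$, and write $p=\gamma(x_1,y_1)(t)$ and $q=\gamma(x_2,y_2)(s)$. Since $\gamma$ is a geodesic bicombing, $d_X(x_1,p)=d_X(\gamma(x_1,y_1)(0),\gamma(x_1,y_1)(t))=t\,d_X(x_1,y_1)$, and similarly $d_X(x_2,q)=s\,d_X(x_2,y_2)$.

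From here the key step is to bound $\abs{d_X(x_1,p)-d_X(x_2,q)}$. By the triangle inequality one has $d_X(x_1,p)\le d_X(x_1,x_2)+d_X(x_2,q)+d_X(q,p)$ and, symmetrically, $d_X(x_2,q)\le d_X(x_2,x_1)+d_X(x_1,p)+d_X(p,q)$. Subtracting gives
\begin{align*}
\abs{d_X(x_1,p)-d_X(x_2,q)}\le d_X(x_1,x_2)+d_X(p,q).
\end{align*}
Substituting $d_X(x_1,p)=t\,d_X(x_1,y_1)$, $d_X(x_2,q)=s\,d_X(x_2,y_2)$, and $d_X(p,q)=d_X(\gamma(x_1,y_1)(t),\gamma(x_2,y_2)(s))$ yields exactly the claimed inequality.

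There is essentially no obstacle here: the only thing to be careful about is the orientation of the two triangle-inequality estimates so that the combined bound is two-sided (i.e.\ controls the absolute value rather than just one signed difference), and to invoke the \emph{geodesic} (not merely quasi-geodesic) hypothesis so that the linear scaling $d_X(\gamma(x,y)(0),\gamma(x,y)(t))=t\,d_X(x,y)$ is exact. This lemma then serves to show, as remarked after Definition~\ref{def:ccbicombing}, that condition~(\ref{qparam-reg}) is automatic for geodesic bicombings, with $\theta(u)=u$.
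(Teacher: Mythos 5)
Your proof is correct and follows essentially the same route as the paper's: interpret $t\,d_X(x_1,y_1)$ and $s\,d_X(x_2,y_2)$ as distances along the geodesic combing paths and then apply the triangle inequality. If anything your two-sided treatment of the absolute value is slightly cleaner than the paper's, which reduces to one case by assuming $t>s$, a condition that does not by itself fix the sign of $t\,d_X(x_1,y_1)-s\,d_X(x_2,y_2)$.
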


\begin{proof}
 We suppose that $t>s$.
 Since $\gamma(x_1,y_1,-)$ and $\gamma(x_2,y_2,-)$ are geodesics, we have
 \begin{align*}
  |t d_X(x_1,y_1) -s d_X(x_2,y_2)|
  =d_X(\gamma(x_1,y_1)(0),\gamma(x_1,y_1)(t))
  -d_X(\gamma(x_2,y_2)(0),\gamma(x_2,y_2)(s)).
 \end{align*}
 By the triangle inequality, we have
 \begin{align*}
  &d_X(\gamma(x_1,y_1)(0),\gamma(x_1,y_1)(t))
  -d_X(\gamma(x_2,y_2)(0),\gamma(x_2,y_2)(s)) \\
  &\leq d_X(\gamma(x_1,y_1)(0),\gamma(x_2,y_2)(0)) + d_X(\gamma(x_2,y_2)(0),\gamma(x_2,y_2)(s)) \\
  &\ +d_X(\gamma(x_2,y_2)(s),\gamma(x_1,y_1)(t)) -  d_X(\gamma(x_2,y_2)(0),\gamma(x_2,y_2)(s)) \\
  &=d_X(x_1,x_2) + d_X(\gamma(x_1,y_1)(t),\gamma(x_2,y_2)(s)).
 \end{align*}
 This completes the proof.
\end{proof}

\begin{definition}
\label{gccbicombing}
An $(E,C)$-\textit{geodesic coarsely convex bicombing} on a metric space $(X,d_X)$ 
is a geodesic bicombing $\gamma : X \times X \times [0,1] \to X$ with the following properties:
Let $x_1,x_2,y_1,y_2 \in X$ and let $a,b \in [0,1]$.
We set $\gamma(x_1,y_1)(a)=y_1'$ and $\gamma(x_2,y_2)(b)=y_2'$.
Then, for $c \in [0,1]$, 
\begin{align*} 
d_X(\gamma(x_1,y_1)(ca), \gamma(x_2,y_2)(cb)) \leq (1-c)Ed_X(x_1,x_2) + cEd_X(y_1',y_2') + C
\end{align*}
holds.

We say that $X$ admits a geodesic coarsely convex bicombing if
it admits a $(E,C)$-geodesic coarsely convex bicombing for some $E \geq 1$ and $C>0$.
We also say that $X$ is geodesic coarsely convex if $X$ admits a geodesic coarsely convex bicombing. 
\end{definition}

There are a number of examples of geodesic coarsely convex spaces including geodesic Gromov hyperbolic spaces, CAT(0) spaces, Busemann spaces, systolic complexes~\cite{bdrySytolic},  proper injective metric spaces~\cite{convex-bicomb}, and Helly graphs~\cite{chalopin2020helly-arXiv}. Examples of geodesically coarsely convex groups arise as groups acting geometrically on these spaces, for example, weak Garside groups of finite type and Artin groups of type FC which are Helly~\cite{MR4285138},  
Artin groups of large type~\cite{MR3999678}, and graphical $C(6)$ small cancellation groups~\cite{MR3845715} which  are systolic.

\begin{lemma}
\label{lem:gccb-is-qgccb}
 An $(E,C)$-\textit{geodesic coarsely convex bicombing} $\gamma$
 on a metric space $(X,d_X)$ is a 
 $(1,0,E,C,\textrm{id}_{\Rp})$-\textit{coarsely convex bicombing} 
 on $(X,d_X)$.
\end{lemma}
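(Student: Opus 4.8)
The plan is to check, one at a time, that an $(E,C)$-geodesic coarsely convex bicombing $\gamma$ on $(X,d_X)$ satisfies the three requirements in Definition~\ref{def:ccbicombing} for the choice of constants $\lambda=1$, $k=0$, the given $E$ and $C$, and $\theta=\mathrm{id}_{\Rp}$.

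First I would observe that a geodesic bicombing is exactly a $(1,0)$-quasi-geodesic bicombing: substituting $\lambda=1$ and $k=0$ into the inequalities $\lambda^{-1}|t-s|\,d_X(x,y)-k\le d_X(\gamma(x,y)(t),\gamma(x,y)(s))\le\lambda|t-s|\,d_X(x,y)+k$ yields precisely the defining equality $d_X(\gamma(x,y)(t),\gamma(x,y)(s))=|t-s|\,d_X(x,y)$ of a geodesic bicombing, and conversely. Hence $\gamma$, being a geodesic bicombing by hypothesis, is a $(1,0)$-quasi-geodesic bicombing.

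Next I would note that condition~(\ref{qconvex}) of Definition~\ref{def:ccbicombing} is, with the same $E$ and $C$, word for word the defining inequality of an $(E,C)$-geodesic coarsely convex bicombing in Definition~\ref{gccbicombing}; so there is nothing to prove there. Finally, condition~(\ref{qparam-reg}) with $\theta=\mathrm{id}_{\Rp}$ asks that for all $x_1,x_2,y_1,y_2\in X$ and all $t,s\in[0,1]$,
\[
\bigl|t\,d_X(x_1,y_1)-s\,d_X(x_2,y_2)\bigr|\le d_X(x_1,x_2)+d_X\bigl(\gamma(x_1,y_1)(t),\gamma(x_2,y_2)(s)\bigr),
\]
which is precisely the conclusion of Lemma~\ref{lem:cc3-tri}, applicable since $\gamma$ is a geodesic bicombing. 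Assembling these three points shows that $\gamma$ is a $(1,0,E,C,\mathrm{id}_{\Rp})$-coarsely convex bicombing.

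I do not expect any real obstacle: the statement is a straightforward unwinding of Definitions~\ref{def:ccbicombing} and~\ref{gccbicombing} combined with the elementary triangle-inequality estimate already recorded in Lemma~\ref{lem:cc3-tri}. The only point requiring a moment of care is reconciling the two notations $\gamma(x,y)(t)$ and $\gamma(x,y,t)$ used in the respective definitions, which denote the same point of $X$.
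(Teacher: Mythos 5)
Your proof is correct and follows the same approach as the paper: the key observation in both is that Lemma~\ref{lem:cc3-tri} supplies condition~(\ref{qparam-reg}) with $\theta=\mathrm{id}_{\Rp}$, while the remaining two requirements of Definition~\ref{def:ccbicombing} hold trivially by inspection. The paper's proof simply leaves those trivial verifications implicit, which you have spelled out.
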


\begin{proof}
 By \cref{lem:cc3-tri}, $\gamma$ satisfies (\ref{qparam-reg})
 of \cref{def:ccbicombing} for $\theta$ being the identity.
\end{proof}

\begin{lemma}  \label{gccc}
Let $E \geq 1$ and $C>0$. 
We suppose that a geodesic bicombing $\gamma : X \times X \times [0,1] \to X$ satisfies the following properties:
\begin{enumerate}

\item \label{gccc1}
Let $x,y_1,y_2 \in X$ and let $a,b \in [0,1]$.
Set $y_1'=\gamma(x,y_1)(a)$ and $y_2'=\gamma(x,y_2)(b)$.
For any $c \in [0,1]$, we have
\begin{align*} 
d_X(\gamma(x,y_1)(ca), \gamma(x,y_2)(cb)) \leq cEd_X(y_1',y_2') + C.
\end{align*}

\item \label{gccc2}
Let $x_1, x_2, y \in X$ and let $a \in [0,1]$. 
Set $y'=\gamma(x_1,y)(a)$.
For any $c \in [0,1]$, we have 
\begin{align*} d_X(\gamma(x_1,y)(ca), \gamma(x_2,y')(c)) \leq (1-c)Ed_X(x_1,x_2)+C.
\end{align*}
   
\end{enumerate}
Then $\gamma$ is a $(E,2C)$-geodesic coarsely convex bicombing. 
\end{lemma}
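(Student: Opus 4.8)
The plan is to deduce the single inequality in Definition~\ref{gccbicombing} from the two hypotheses~(\ref{gccc1}) and~(\ref{gccc2}) by interpolating through a common intermediate configuration, namely the pair $(x_1,y_2')$ where $y_2'=\gamma(x_2,y_2)(b)$. Concretely, fix $x_1,x_2,y_1,y_2\in X$ and $a,b\in[0,1]$, set $y_1'=\gamma(x_1,y_1)(a)$ and $y_2'=\gamma(x_2,y_2)(b)$, and fix $c\in[0,1]$. I want to compare $\gamma(x_1,y_1)(ca)$ with $\gamma(x_2,y_2)(cb)$, and I will route the comparison through $\gamma(x_1,y_2')(c)$. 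The triangle inequality gives
\begin{align*}
d_X(\gamma(x_1,y_1)(ca),\gamma(x_2,y_2)(cb))
\leq d_X(\gamma(x_1,y_1)(ca),\gamma(x_1,y_2')(c)) + d_X(\gamma(x_1,y_2')(c),\gamma(x_2,y_2)(cb)).
\end{align*}
The first term on the right is bounded by hypothesis~(\ref{gccc1}) applied with base point $x=x_1$, endpoints $y_1$ and $y_2'$, parameters $a$ and $b'=1$ (so that $\gamma(x_1,y_1)(a)=y_1'$ and $\gamma(x_1,y_2')(1)=y_2'$): this yields $cE\,d_X(y_1',y_2')+C$. The second term is bounded by hypothesis~(\ref{gccc2}) applied with base points $x_1,x_2$, common endpoint $y=y_2$, parameter $a'=b$ (so that $\gamma(x_1,y_2)(b)$ plays the role of $y'$)\,---\,here I need $\gamma(x_1,y_2)(b)=y_2'$? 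No; rather (\ref{gccc2}) is stated with $y'=\gamma(x_1,y)(a)$, so I should instead apply it with the roles of $x_1$ and $x_2$ swapped and with $y=y_2$, $a=b$, giving $d_X(\gamma(x_2,y_2)(cb),\gamma(x_1,y_2')(c))\leq (1-c)E\,d_X(x_1,x_2)+C$ where now $y_2'=\gamma(x_2,y_2)(b)$, which matches our definition of $y_2'$. Adding the two bounds produces exactly $(1-c)E\,d_X(x_1,x_2)+cE\,d_X(y_1',y_2')+2C$, which is the desired $(E,2C)$-geodesic coarsely convex inequality.

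So the key steps, in order, are: (i) introduce the auxiliary point $y_2'=\gamma(x_2,y_2)(b)$ and the auxiliary path $\gamma(x_1,y_2')(\cdot)$; (ii) split $d_X(\gamma(x_1,y_1)(ca),\gamma(x_2,y_2)(cb))$ via the triangle inequality through $\gamma(x_1,y_2')(c)$; (iii) bound the ``$y$-side'' term by hypothesis~(\ref{gccc1}) with base point $x_1$; (iv) bound the ``$x$-side'' term by hypothesis~(\ref{gccc2}) with common endpoint $y_2$, taking care to match the combing parameter $a$ there with our $b$, and to orient the two base points correctly so that the relevant $y'$ in~(\ref{gccc2}) coincides with $y_2'$; (v) add and collect constants, obtaining $2C$. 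I would also remark at the start that $\gamma$ is assumed to be a geodesic bicombing, so the output $\gamma$ of the lemma automatically satisfies the parametrization axiom, and by Lemma~\ref{lem:gccb-is-qgccb} it is in turn a $(1,0,E,2C,\mathrm{id}_{\Rp})$-coarsely convex bicombing; but that is a corollary, not part of this statement.

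The only genuinely delicate point\,---\,the ``main obstacle,'' though it is really just bookkeeping\,---\,is verifying in step~(iv) that hypothesis~(\ref{gccc2}) is being applied to the correct configuration: its statement fixes $y'=\gamma(x_1,y)(a)$ and concludes about $d_X(\gamma(x_1,y)(ca),\gamma(x_2,y')(c))$, so one must identify which of our two base points $x_1,x_2$ should play the role of ``$x_1$'' in~(\ref{gccc2}), and confirm that the resulting intermediate point equals the $y_2'$ introduced in step~(i) rather than some other combing point. Once the dictionary between the two sets of variables is written out explicitly, the rest is a one-line triangle-inequality computation, so I would not belabor it beyond displaying the three inequalities and their sum.
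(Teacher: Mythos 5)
Your proof is correct and follows essentially the same route as the paper's: introduce $y_2'=\gamma(x_2,y_2)(b)$, pass through the auxiliary point $\gamma(x_1,y_2')(c)$, bound the two legs by hypotheses~(\ref{gccc1}) and~(\ref{gccc2}) respectively, and add. Your careful bookkeeping in step~(iv) in fact fixes a small slip in the paper's written proof, which sets $y_2'=\gamma(x_1,y_2)(b)$ rather than the needed $\gamma(x_2,y_2)(b)$; your dictionary (swap the roles of $x_1,x_2$ in~(\ref{gccc2}), take $y=y_2$, $a=b$) is the correct one and makes the intermediate point match.
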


\begin{proof}
Let $x_1,x_2,y_1,y_2 \in X$ and let $a,b \in [0,1]$.
We set $\gamma(x_1,y_1)(a)=y_1'$ and $\gamma(x_1,y_2)(b)=y_2'$.
By assumption (\ref{gccc1}), we have that
\begin{align*} 
d_X(\gamma(x_1,y_1)(ca), \gamma(x_1,y_2')(c)) \leq cEd_X(y_1',y_2') + C.
\end{align*}
By assumption (\ref{gccc2}), we have that
\begin{align*} 
d_X(\gamma(x_2,y_2)(cb), \gamma(x_1,y_2')(c)) \leq (1-c)Ed_X(x_1,x_2)+C.
\end{align*}
By combining these equations, 
\[  
d_X(\gamma(x_1,y_1)(ca), \gamma(x_2,y_2)(cb)) \leq (1-c)Ed_X(x_1,x_2) + cEd_X(y_1',y_2') + 2C. \qedhere
\]
\end{proof}

\begin{definition}
    Let $K\geq 0$.
    A geodesic bicombing $\gamma$ is $K$-\textit{coarsely consistent} if 
    there exists $K \leq 0$ such that
    for all $x,y \in X$ and $a\in [0,1]$, if $z=\gamma(x,y)(a)$ then  
    \begin{align*}
        d_X(\gamma(x,z)(c),\gamma(x,y)(ca)) \leq  K 
    \end{align*}
    for all $c\in [0,1]$.         
    In the case that we can take $K$ as 0, we call
    $\gamma$ \textit{consistent}.
\end{definition}

\begin{definition}
    Let $X$ be a metric space with a geodesic bicombing $\gamma$.
    Let $E\geq 1$ and $C\geq 0$ be constants.
 We say that $\gamma$ is $(E,C)$-\textit{coarsely forward convex} if 
 for any $v,w_1,w_2 \in X$ and $c\in [0,1]$,  
  \begin{align*}
    d_X(\gamma(v,w_1)(c), \gamma(v,w_2)(c)) \leq cEd_X(w_1, w_2)+C.
\end{align*}

Analogously,  $\gamma$ is $(E,C)$-\textit{coarsely backward convex} if 
 for any $v_1,v_2,w \in X$ and $c\in [0,1]$,  
 \begin{align*}
    d_X(\gamma(v_1,w)(c), \gamma(v_2,w)(c)) \leq (1-c)Ed_X(v_1, v_2)+C.
\end{align*}
\end{definition}

\begin{proposition}  \label{ccgccc}
Let $E \geq 1$ $C\geq 0$ and $K\geq 0$ and let  $\gamma : X \times X \times [0,1] \to X$ is a geodesic bicombing on a metric space $X$.   If $\gamma$ is $K$-coarsely consistent, $(E,C)$-coarsely forward convex, and $(E,C)$-coarsely backward convex, then $\gamma$ is a geodesic $(E,2C+4K)$-coarsely convex bicombing. 
\end{proposition}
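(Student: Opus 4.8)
The plan is to deduce this from \cref{gccc}. I will verify that $\gamma$ satisfies hypotheses~(\ref{gccc1}) and~(\ref{gccc2}) of that lemma with the single constant $C+2K$ playing the role of its $C$; \cref{gccc} then shows at once that $\gamma$ is an $(E,2(C+2K))=(E,2C+4K)$-geodesic coarsely convex bicombing, which is the assertion. The only input beyond the three hypotheses is the triangle inequality, together with the remark that $K$-coarse consistency lets one replace a truncated combing path $\gamma(x,y)(ca)$ by the path $\gamma\bigl(x,\gamma(x,y)(a)\bigr)(c)$ at the cost of an additive error $K$, while keeping the same base vertex $x$.

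To check~(\ref{gccc1}), fix $x,y_1,y_2\in X$ and $a,b\in[0,1]$, and set $y_1'=\gamma(x,y_1)(a)$ and $y_2'=\gamma(x,y_2)(b)$. For $c\in[0,1]$, $K$-coarse consistency applied to the combing lines $\gamma(x,y_1)$ and $\gamma(x,y_2)$ gives $d_X\bigl(\gamma(x,y_1)(ca),\gamma(x,y_1')(c)\bigr)\leq K$ and $d_X\bigl(\gamma(x,y_2)(cb),\gamma(x,y_2')(c)\bigr)\leq K$, while $(E,C)$-coarse forward convexity, with base vertex $x$ and endpoints $y_1',y_2'$, gives $d_X\bigl(\gamma(x,y_1')(c),\gamma(x,y_2')(c)\bigr)\leq cEd_X(y_1',y_2')+C$; adding these three inequalities via the triangle inequality yields~(\ref{gccc1}) with constant $C+2K$. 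To check~(\ref{gccc2}), fix $x_1,x_2,y\in X$ and $a\in[0,1]$, and set $y'=\gamma(x_1,y)(a)$. For $c\in[0,1]$, $K$-coarse consistency applied to $\gamma(x_1,y)$ gives $d_X\bigl(\gamma(x_1,y)(ca),\gamma(x_1,y')(c)\bigr)\leq K$, and $(E,C)$-coarse backward convexity, with base vertices $x_1,x_2$ and endpoint $y'$, gives $d_X\bigl(\gamma(x_1,y')(c),\gamma(x_2,y')(c)\bigr)\leq(1-c)Ed_X(x_1,x_2)+C$; the triangle inequality then yields~(\ref{gccc2}) with constant $C+K$, hence also with the larger constant $C+2K$. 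Applying \cref{gccc} with the common constant $C+2K$ completes the argument.

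I expect no essential difficulty: the whole proof is the triangle inequality applied to well-chosen intermediate points $\gamma(x,y_i')(c)$, respectively $\gamma(x_1,y')(c)$. The one point requiring care is the bookkeeping in each use of coarse consistency, namely that the base vertex of the combing line being truncated must coincide with the base vertex of the combing line it is compared to; this is what guarantees that exactly one cross-term is produced on each side — the $cE$-term from forward convexity and the $(1-c)E$-term from backward convexity — and that the two consistency errors are absorbed into the additive constant rather than appearing in front of a distance.
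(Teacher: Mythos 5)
Your proof is correct and follows the same route as the paper: verify hypothesis~(\ref{gccc1}) of \cref{gccc} with constant $C+2K$ (two applications of $K$-coarse consistency plus one of forward convexity), verify hypothesis~(\ref{gccc2}) with constant $C+K \leq C+2K$ (one application of $K$-coarse consistency plus one of backward convexity), and then invoke \cref{gccc} with the common constant $C+2K$ to obtain an $(E,2C+4K)$-geodesic coarsely convex bicombing.
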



\begin{proof}
    It is enough to show that $\gamma$ satisfies item (\ref{gccc1}) and
    (\ref{gccc2}) in Lemma \ref{gccc}. 
    We give a proof of (\ref{gccc1}). 
 Let $x,y_1,y_2 \in X$ and let $a,b \in [0,1]$.
 Set $y_1'=\gamma(x,y_1)(a)$ and $y_2'=\gamma(x,y_2)(b)$.
 Let $c\in [0,1]$.
 Since $\gamma$ is $K$-coarsely consistent, we have
\[ 
     d_X(\gamma(x,y_1')(c),\gamma(x,y_1)(ca)) \leq K \quad \text{and} \quad
     d_X(\gamma(x,y_2')(c),\gamma(x,y_2)(cb)) \leq K
\]
It follows that
 \begin{align*}
     d_X(\gamma(x,y_1)(ca), \gamma(x,y_2)(cb))
     \leq & \  d_X(\gamma(x,y_1)(ca),\gamma(x,y_1')(c))\\
       & + d_X(\gamma(x,y_1')(c), \gamma(x,y_2')(c))\\
       & + d_X(\gamma(x,y_2')(c),\gamma(x,y_2)(cb))\\
     \leq & \ cEd_X(y_1',y_2') + C +2K.
 \end{align*}

Now we prove  \eqref{gccc2}. Let $x_1, x_2, y \in X$ and let $a \in [0,1]$. 
Set $y'=\gamma(x_1,y)(a)$.
Let $c \in [0,1]$. By $K$-coarsely consistency, 
\begin{align*}
 d_X(\gamma(x_1,y)(ca),\gamma(x_1,y')(c)) \leq K.
\end{align*}
Then we have
 \begin{align*} 
  d_X(\gamma(x_1,y)(ca), \gamma(x_2,y')(c)) \leq & \
  d_X(\gamma(x_1,y)(ca),\gamma(x_1,y')(c)) \\
  & + d_X(\gamma(x_1,y')(c),\gamma(x_2,y')(c)) \\
  \leq & \ (1-c)Ed_X(x_1,x_2) + C + K.\qedhere
 \end{align*}
\end{proof}

\subsection{Geodesic coarsely convex group pairs}
In this subsection, we give an example of geodesic coarsely convex group pairs. 
Let $F_2 = \langle a, b \rangle$ be the free group of rank $2$
and let $S=\{a,b\}$ be the standard generating set. 
For a subgroup $A= \langle a \rangle$, 
we consider the coned-off Cayley graph $\hat{\Gamma}(F_2\times \mathbb{Z},A,S)$ as a metric graph. 
Note that $\mathbb{Z} \times \mathbb{Z}$ can be quasi-isometrically embedded on $\hat{\Gamma}(F_2\times \mathbb{Z},A,S)$, and hence  $(F_2 \times \mathbb{Z},A)$ is not a relatively hyperbolic group. 
In \cite{Tp}, it is shown that $\hat{\Gamma}(F_2\times \mathbb{Z},A,S)$ admit a geodesic coarsely convex bicombing. 
Therefore, $(F_2 \times \mathbb{Z}, A)$ is a geodesic coarsely convex group pair. 

\subsection{Remark on the definition of coarsely convex group pair}

\begin{proposition}
\label{prop:extra-condition}
        If a group pair $(G,\mathcal{H})$ is {geodesic coarsely convex}, 
        then 
        $G$ acts geometrically relative to $\mathcal{H}$ on a geodesic coarsely convex space $X$ such that
    \begin{enumerate}[label=$($\alph*$)$]
        \item for every $H\in\mathcal{H}$ the fixed point set $X^H$ 
        is bounded, 
        \item for each $x\in X$ the isotropy $G_x$ is either finite 
        or conjugate to some $H\in\mathcal{H}$, and
        \item for each $H\in \mathcal{H}$, there exist $x\in X$
        such that $G_x = H$.
    \end{enumerate}
\end{proposition}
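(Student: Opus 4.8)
The plan is to start from the definition of a geodesic coarsely convex group pair, which provides a geodesic coarsely convex space $X$ with a $G$-action satisfying items (a) and (b) of Definition~\ref{def:group_pair_is}(\ref{def:group_pair_gcc}), and to upgrade it to an action satisfying the three conditions in the statement. The two things that need to be added are: first, that the action is \emph{geometric relative to $\mathcal H$} in the sense of Definition~\ref{def:rel-geometric-action} (i.e.\ properly discontinuous relative to $\mathcal H$, cocompact, and with $X^H\neq\emptyset$ for all $H\in\mathcal H$); and second, condition (c), that every $H\in\mathcal H$ is realized as a full point stabilizer. The geometric action in the original definition is a genuinely geometric (proper, cocompact) action, so cocompactness and properness relative to $\mathcal H$ should follow quickly: a proper cocompact action is in particular properly discontinuous relative to any $\mathcal H$, since the set $\{g : U\cap gU\neq\emptyset\}$ is finite, hence of finite diameter relative to $\mathcal H$. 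The condition $X^H\neq\emptyset$ is part of the hypothesis (the fixed-point set $X^H$ is bounded but, implicitly in the definition of a relative geometric action being what we want to land in, nonempty — I would check that boundedness here is meant as ``bounded and nonempty'', or else note $X^H\neq\emptyset$ follows because some point of $X$ must have stabilizer containing $H$; see below).

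The substantive point is condition (c). Condition (b) guarantees that every point stabilizer is finite or a conjugate of some $H\in\mathcal H$, but it does not guarantee that each $H$ itself (rather than merely a conjugate) occurs, nor that an $H$ occurs as a \emph{full} stabilizer rather than a proper subgroup of one. To fix this, I would modify $X$ by an equivariant coning-type construction. For each $H\in\mathcal H$, the fixed-point set $X^H$ is bounded and $G$ permutes the collection $\{gX^Hg^{-1}$-type translates$\}$; I would attach, $G$-equivariantly, one new vertex $v_H$ to $X$ for the orbit of $X^H$, joined by edges (or a cone) to the points of $X^H$, and declare $g.v_H$ appropriately for $g$ in a transversal. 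The new space $X'$ is still geodesic, is still coarsely convex (coning off a bounded set changes the metric by a bounded amount, so one can transport the coarsely convex bicombing; this uses that $X^H$ is bounded, which is exactly where hypothesis (a) is used), the action is still proper cocompact relative to $\mathcal H$, and now $G_{v_H}=H$ exactly, because $g$ fixes $v_H$ iff $g$ preserves $X^H$ setwise, and — after possibly replacing $\mathcal H$-members by the full setwise stabilizers, or invoking that $\mathcal H$ is already a peripheral structure with $X^H$ having stabilizer exactly $H$ — this setwise stabilizer is $H$. One must be slightly careful that adding $v_H$ does not create new infinite stabilizers or merge orbits incorrectly; this is handled by adding one cone point per $G$-orbit of fixed-point sets and checking stabilizers directly.

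The key steps in order: (1) unpack the hypothesis and record that $G\curvearrowright X$ is proper, cocompact, with each $X^H$ bounded; (2) observe proper $+$ cocompact $\Rightarrow$ properly discontinuous relative to $\mathcal H$ and cocompact, so the action is already geometric relative to $\mathcal H$ once we know $X^H\neq\emptyset$; (3) perform the equivariant cone-point construction to produce $X'$, adding one point per $G$-orbit of peripheral fixed-point sets; (4) verify $X'$ is geodesic and admits a geodesic coarsely convex bicombing, using boundedness of the $X^H$ to push the bicombing across the bounded modification (this is the step requiring the most care — one must re-establish the $(E,C)$-inequality of Definition~\ref{gccbicombing}, or more conveniently verify coarse consistency plus coarse forward/backward convexity and apply Proposition~\ref{ccgccc}); (5) verify the three stabilizer conditions (a), (b), (c) for $X'$, the main new content being that $G_{v_H}=H$ exactly. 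I expect step (4) — transporting the coarsely convex bicombing across the coning construction while keeping constants uniform — to be the main obstacle, since one must control the bicombing on geodesics that pass through or near the new cone points; the saving grace is that each cone point sits at bounded distance from a bounded set, so detours through cone points cost only a bounded additive error, which is absorbed into the constant $C$.
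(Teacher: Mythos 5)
The overall strategy — modify $X$ by an equivariant cone-type construction so that each $H\in\mathcal H$ is realized as a full point stabilizer — is the same as the paper's, which uses the $\spike$ construction of Definition~\ref{def:Spike} together with Remark~\ref{remark:conedoff-preserve-gcc} and the remark following it. However, the specific construction you propose has a genuine gap at exactly the step you yourself flag as suspect, and your hedges do not repair it.

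You cone off the \emph{entire} bounded set $X^H$ to a single new vertex $v_H$, and claim $G_{v_H}=H$ ``because $g$ fixes $v_H$ iff $g$ preserves $X^H$ setwise.'' The first clause is right, but the second does not give $H$: the setwise stabilizer $\{g\in G : g.X^H=X^H\}$ contains $N_G(H)$ (since $g.X^H=X^{gHg^{-1}}$), and $N_G(H)$ can be strictly larger than $H$. The paper's own worked example $(F_2\times\Z, \{A\})$ with $A=\langle a\rangle\times\{0\}$ has $N_{F_2\times\Z}(A)=\langle a\rangle\times\Z\supsetneq A$, so $G_{v_A}$ contains $\Z\times\Z$. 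This breaks condition (c), and in this example also condition (b), since $\Z\times\Z$ is neither finite nor conjugate into $\{A\}$. Your two proposed escape routes are not available: ``replacing $\mathcal H$-members by the full setwise stabilizers'' changes the peripheral structure, whereas the proposition must be proved for the given $\mathcal H$; and ``invoking that $X^H$ has setwise stabilizer exactly $H$'' is not a hypothesis. The paper sidesteps this entirely by spiking over a single point $x_H\in X^H$ rather than over $X^H$ itself: $\spike(X,\{H\},\{x_H\})$ attaches the mapping cylinder of the $G$-map $G/H\to X$, $gH\mapsto g.x_H$, so the new spike points are in $G$-equivariant bijection with $G/H$ and the basepoint $\hat x_H$ has $G$-stabilizer exactly $H$. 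If $G_{x_H}\supsetneq H$, this simply means several spikes dangle from $x_H$, one per coset of $H$ in $G_{x_H}$; the spike endpoints still each have stabilizer a conjugate of $H$. A secondary benefit of this construction (not available to yours without taking the spike length large compared to $\diam X^H$) is that $X$ remains a \emph{convex} subspace, so the geodesic bicombing on $X$ extends to the spiked space with no distortion — this is what makes Remark~\ref{remark:conedoff-preserve-gcc} an ``observation.''

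One smaller point: your step (2) treats the hypothesis as a classical geometric (proper, cocompact) action that needs to be upgraded to a \emph{relative} one. In the paper's conventions, the phrase ``the group pair $(G,\mathcal H)$ acts geometrically on $X$'' appearing in Definition~\ref{def:group_pair_is}(\ref{def:group_pair_gcc}) is already defined (last sentence of Definition~\ref{def:rel-geometric-action}) to mean the action is geometric \emph{relative to $\mathcal H$}, so in particular $X^H\ne\emptyset$ is part of the hypothesis and there is nothing to upgrade. Your step (2) is therefore addressing a non-issue; the content is entirely in arranging condition (c), and that is where the construction needs to be the spike, not the cone over $X^H$.
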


This proposition is a consequence of the construction and the two remarks below.


\begin{definition}[$\spike(X,\{H_i\},\{x_i\})$]\label{def:Spike}
    Let $\{H_i\mid i\in I\}$ be a collection of subgroups of a group $A$, $X$ an
$A$-space, and $\{x_i\mid i\in I\} \subset X$ such that  $x_i\in X^{H_i}$ for each $i\in I$.  The $A$-space
$\spike(X,\{H_i\},\{x_i\})$ is defined as the mapping cylinder of the
$A$-map $\psi\colon \bigsqcup_i (A/H_i) \to X$ defined by $H_i\mapsto x_i$   where $\bigsqcup_i (A/H_i)$ is considered a discrete space. More specifically, $\spike(X,\{H_i\},\{x_i\})$ is the quotient space arising as  $\left( [0,1] \times \bigsqcup_i (A/H_i) \right)  \sqcup X$ module the relations $(\psi(aH_i),0)\sim a.x_i$ for each $i\in I$ and $a\in A$.  Let $\hat x_i$ denote the point with representative $(\psi(aH_i),0)$. Any point in the $A$-orbit of a point $\hat x_i$ is called a \emph{spike-point}. 

In
the case that $X$ is a metric $A$-space, $\spike(X,\{H_i\},\{x_i\})$ is
considered a metric $A$-space by regarding each of the segments between $a.x_i$ and $a.\hat x_i$ for $a\in A$ and $i\in I$ having length a fixed number $\ell>0$; in this case, we use the notation $\spike(X,\{H_i\},\{x_i\}, \ell)$.  Note that there is
a natural $A$-equivariant embedding $X\to  \spike(X,\{H_i\},\{x_i\})$   which is a
topological (metric) embedding.  

In this article, we only use the construction of $\spike(X,\{H_i\},\{x_i\})$ in the case that the collection $\{H_i\}$ consists of one or two subgroups, and it will appear again in Section~\ref{sec:const-eqiv-tree}.
\end{definition}


\begin{remark}
 \label{remark:conedoff-preserve-gcc}
 It is an observation that if $X$ admits a $G$-equivariant coarsely convex geodesic bicombing, the same holds for $\spike(X,\{H_i\},\{x_i\})$. 
 \end{remark}

 \begin{remark}
  If $G$ acts geometrically on $X$ relative to $\mathcal{H}$, $C$ is a subgroup of some $H\in \mathcal{H}$, and $x\in X^H$, then $G$ acts geometrically on  $\spike(X, C, x)$ relative to $\mathcal{H}$.   
 \end{remark}

\begin{figure}[h]
\begin{tikzpicture}
\draw(0,0)--++(12,0)--++(-1,-1.5)--++(-12,0)--cycle; 
\draw(2,-0.5)node[below]{$x$};
\draw(2,-0.5)--++(1,2.5);
\fill(3,2)circle(0.06);
\draw(3,2)node[above]{$\hat{x}$};
\draw(2,-0.5)--++(0,3);
\draw(2,-0.5)--++(-1,2.5);
\fill(2,-0.5)circle(0.06);
\draw(5,-0.5)node[below]{$y$};
\draw(5,-0.5)--++(1,2.5);
\draw(5,-0.5)--++(0,3);
\draw(5,-0.5)--++(-1,2.5);
\fill(5,-0.5)circle(0.06);
\draw(8,-0.5)node[below]{$z$};
\draw(8,-0.5)--++(1,2.5);
\draw(8,-0.5)--++(0,3);
\draw(8,-0.5)--++(-1,2.5);
\fill(8,-0.5)circle(0.06);
\end{tikzpicture}
\caption{
This figure illustrates the structure of the Spike space.
Let $x,y,z \in X^H$.
From $x$, there emanete $G_x/C$ edge, each terminating at a spike-point. 
The stabilizer of each spike-point is of the form $gC$ for some $g \in G_x$.
}
\label{conesp}
\end{figure}
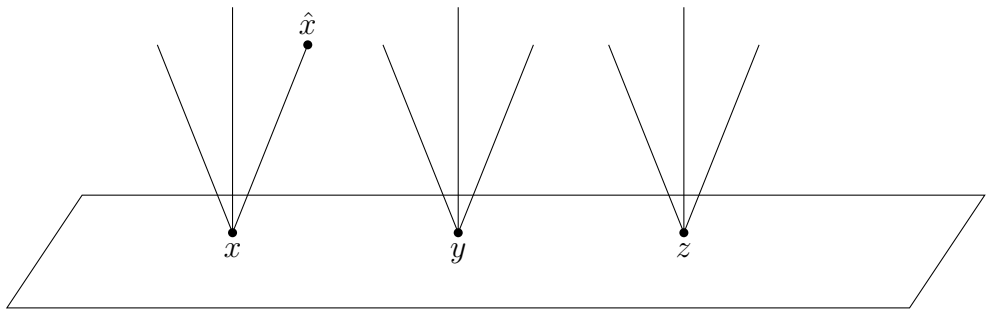

\section{Criteria for geodesic coarse convexity}

\begin{definition}
    Let $Z$ be a metric space and let $\gamma\colon [0,1] \to Z$ be geodesic.
    Let $c\in [0,1]$. The \textit{reparametrization} of $\gamma|_{[0,c]}$ is 
    a geodesic $\gamma_c\colon [0,1]\to X$ defined by 
    $\gamma_c(t)= \gamma(ct)$.
\end{definition}

\begin{definition}
    Let $Z$ be a metric space and let $\gamma_1,\gamma_2\colon [0,1] \to Z$ be geodesic.
    Let $D\geq 0$ be a constant.
    We say that $\gamma_1$ and $\gamma_2$ coarsely $D$-fellow travel if 
    \[
    \max\{ d_Z(\gamma_1(0),\gamma_2(0)),d_Z(\gamma_1(1),\gamma_2(1)) \} \leq D
    \]
    and the Hausdorff distance between the images of $\gamma_1$ and $\gamma_2$ is at most $D$.
\end{definition}

\begin{definition}
\label{cftd}
A geodesic bicombing $\Gamma$ on a metric space $Z$ is \textit{coarsely $(C,D,E)$-forward thin}, if for any $v,w_1,w_2 \in Z$ such that $\min\{d_Z(v,w_1), d_Z(v,w_2)\}> 2D$,  the geodesics  $\gamma_1=\Gamma(v,w_1)$ and $\gamma_2=\Gamma(v,w_2)$ intersect a convex subspace $X$ such that $\Gamma$ restricted to $X\times X$ is a  geodesic $(E,C)$-coarsely convex bicombing on $X$ and the following conditions hold.

Let $p_1$ and $p_2$ be initial points of $\gamma_1$ and $\gamma_2$ in $X$ with $p_1=\gamma_1(c_0)$ and $p_2=\gamma_2(c_0')$.  Let $a$ and $b$ be the terminal points of $\gamma_1$ and $\gamma_2$ in $X$ with $a=\gamma_1(c_1)$ and $b=\gamma_2(c_1')$.
 
  \begin{enumerate}
         \item \label{Ex-3-2}
         The reparametrizations of $\gamma_1|_{[0,c_0]}$ and $\gamma_2|_{[0,c_0']}$ coarsely $D$-fellow travel. 
         \item 
         \label{Ex-3-3}
         $d_Z(a,b)\leq \max\{c_1, c_1'\}d_Z (\gamma_1(1), \gamma_2(1)) + 4D$ holds.
         \item 
         \label{Ex-3-4}
         $\gamma_1|_{[c_0,c_1]}=\Gamma(p_1,a)$ and $\gamma_2|_{[c_0',c_1']}=\Gamma(p_2,b)$ holds.
         \item 
         \label{Ex-3-5}
         $d_Z(\gamma_1(1),a)+d_Z(a,b)+d_Z(b,\gamma_2(1)) \leq d_Z(\gamma_1(1),\gamma_2(1)) +4D$ holds.
   \end{enumerate}
\end{definition}

\begin{definition}
\label{cbtd}
A geodesic bicombing $\Gamma$ on a metric space $Z$ is \textit{coarsely $(C,D,E)$-backward thin}, if for any $v_1,v_2,w \in Z$ such that $\min\{d_Z(v_1,w), d_Z(v_2,w)\}> 2D$,  the geodesics  $\gamma_1=\Gamma(v_1,w)$ and $\gamma_2=\Gamma(v_2,w)$ intersect a convex subspace $X$ such that $\Gamma$ restricted to $X\times X$ is a  geodesic $(E,C)$-coarsely convex bicombing on $X$ and the following conditions hold.

 Let $p_1$ and $p_2$ be initial points of $\gamma_1$ and $\gamma_2$ in $X$ with $p_1=\gamma_1(c_0)$ and $p_2=\gamma_2(c_0')$.
 Let $a$ and $b$ be the terminal points of $\gamma_1$ and $\gamma_2$ in $X$ with $a=\gamma_1(c_1)$ and $b=\gamma_2(c_1')$. 
     \begin{enumerate}
        \item 
        \label{Ey-3-2} The reparametrizations of 
         $\gamma_1|_{[c_1,1]})$ and $\gamma_2|_{[c_1',1]})$ coarsely $D$-fellow travel. 
         \item 
         \label{Ey-3-3}
         $d_Z(p_1,p_2)\leq \max\{1-c_0, 1-c_0'\}d_Z (\gamma_1(0), \gamma_2(0)) + 4D$ holds.
         \item 
         \label{Ey-3-4}
         $\gamma_1|_{[c_0,c_1]}=\Gamma(p_1,a)$ and $\gamma_2|_{[c_0',c_1']}=\Gamma(p_2,b)$ holds.
         \item 
         \label{Ey-3-5}
         $d_Z(\gamma_1(0),p_1)+d_Z(p_1,p_2)+d_Z(p_2,\gamma_2(0)) \leq d_Z(\gamma_1(0),\gamma_2(0)) +4D$ holds.
     \end{enumerate}
    
\end{definition}

\begin{theorem}
\label{consistent-and-thin-cconvex}
    Let $Z$ be a metric space with a geodesic bicombing $\Gamma$. 
    If $\Gamma$ is coarsely consistent, coarsely forward thin and coarsely backwards thin, then $\Gamma$ is  {geodesic coarsely convex}.
\end{theorem}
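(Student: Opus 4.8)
The plan is to reduce \cref{consistent-and-thin-cconvex} to \cref{ccgccc}. Since $\Gamma$ is assumed coarsely consistent, it suffices to exhibit constants $E'\ge 1$ and $C'\ge 0$ for which $\Gamma$ is $(E',C')$-coarsely forward convex and $(E',C')$-coarsely backward convex; \cref{ccgccc} then upgrades this to the desired geodesic coarsely convex bicombing. The forward and backward statements are mirror images of one another under reversing the direction of $\Gamma$, fed respectively by the coarsely forward thin and the coarsely backward thin hypotheses, so I will only discuss the forward statement; the backward one is obtained by the symmetric argument using \cref{cbtd} in place of \cref{cftd}.

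To prove coarse forward convexity, fix $v,w_1,w_2\in Z$, set $\gamma_i=\Gamma(v,w_i)$, and let $c\in[0,1]$; the target is an estimate $d_Z(\gamma_1(c),\gamma_2(c))\le cE'd_Z(w_1,w_2)+C'$. If $\min\{d_Z(v,w_1),d_Z(v,w_2)\}\le 2D$, say $d_Z(v,w_1)\le 2D$, a triangle inequality through $v$ gives directly $d_Z(\gamma_1(c),\gamma_2(c))\le c\,d_Z(v,w_1)+c\,d_Z(v,w_2)\le c\,d_Z(w_1,w_2)+4D$, already of the required shape. Otherwise $\min\{d_Z(v,w_1),d_Z(v,w_2)\}>2D$ and we invoke the coarsely $(C,D,E)$-forward thin hypothesis of \cref{cftd}: we obtain a convex subspace $X$ on which $\Gamma$ restricted to $X\times X$ is a genuine $(E,C)$-geodesic coarsely convex bicombing, parameters $c_0\le c_1$ and $c_0'\le c_1'$, and the four conditions (\ref{Ex-3-2})--(\ref{Ex-3-5}). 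These split each $\gamma_i$ into three phases: an initial phase on which the reparametrized geodesics coarsely $D$-fellow travel with $D$-close endpoints $p_1,p_2$, a middle phase $\gamma_1|_{[c_0,c_1]}=\Gamma(p_1,a^*)$, $\gamma_2|_{[c_0',c_1']}=\Gamma(p_2,b^*)$ lying inside $X$, and a terminal phase whose geometry is pinned down by the length estimates (\ref{Ex-3-3}) and (\ref{Ex-3-5}).

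The core of the argument is then a case analysis on where $c$ sits relative to $c_0,c_0',c_1,c_1'$. The key inputs are: (i) the elementary fact that two geodesics which coarsely $D$-fellow travel and have $D$-close endpoints remain within $O(D)$ of one another pointwise, controlling the initial phase, together with the identity $|c_0 d_Z(v,w_1)-c_0'd_Z(v,w_2)|\le d_Z(p_1,p_2)\le D$ used to absorb the mismatch between the two geodesic speeds; (ii) the $(E,C)$-coarse convexity inequality of $\Gamma$ on $X$ applied to $\Gamma(p_1,a^*)$ and $\Gamma(p_2,b^*)$, which is where the linear term $cE'd_Z(w_1,w_2)$ of the target estimate is produced, after first traveling along one of these $X$-geodesics to realign fractional parameters and using (\ref{Ex-3-3}) (note its coefficient $\max\{c_1,c_1'\}$) to bound $d_Z(a^*,b^*)$ in terms of $d_Z(w_1,w_2)$; and (iii) in the terminal phase the consequence of (\ref{Ex-3-5}) that $(1-c_1)d_Z(v,w_1)+(1-c_1')d_Z(v,w_2)\le d_Z(w_1,w_2)+4D$, which forces $c_1,c_1'$ close to $1$ exactly in the regime where a crude triangle-inequality bound would otherwise fail. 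Gluing the phase estimates by the triangle inequality yields $E',C'$ depending only on $C,D,E$, and \cref{ccgccc} (this is where coarse consistency is used) completes the proof.

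The hard part is the bookkeeping at the phase interfaces: because the breakpoints of the two geodesics are misaligned ($c_0\ne c_0'$ and $c_1\ne c_1'$), for a given $c$ one geodesic may lie in a different phase than the other, producing several mixed cases; in each one must route the triangle inequality through an auxiliary point on a single geodesic to bring the two arguments into comparable phases, and then check that every realignment cost is bounded by a uniform multiple of $d_Z(w_1,w_2)$ plus a constant — the coefficient $\max\{c_1,c_1'\}$ in (\ref{Ex-3-3}) and the additivity (\ref{Ex-3-5}) are precisely what make these bounds close. One could alternatively reparametrize $\gamma_1,\gamma_2$ so that their three phases occupy matching parameter intervals and then transfer the estimate back to the geodesic parametrization, but this only relocates the same difficulty.
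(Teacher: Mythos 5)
Your proposal is correct and follows essentially the same route as the paper: the theorem is obtained by first establishing $(E',C')$-coarse forward and backward convexity from the forward and backward thin hypotheses via the case analysis on where $c$ sits relative to $c_0,c_0',c_1,c_1'$ (these are the paper's Propositions~\ref{cft} and~\ref{cbt}), and then invoking Proposition~\ref{ccgccc}, which is where coarse consistency enters. Your explicit treatment of the degenerate case $\min\{d_Z(v,w_1),d_Z(v,w_2)\}\le 2D$, which falls outside the scope of Definition~\ref{cftd}, is a useful addition that the paper's proof of Proposition~\ref{cft} passes over.
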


The theorem is an immediate consequence of Proposition~\ref{ccgccc} and the following two propositions. 

\begin{proposition}
\label{cft}
    Let $Z$ be a metric space with a geodesic bicombing $\Gamma$. 
    If $\Gamma$ is coarsely forward thin, then $\Gamma$ is  $(E,C)$-coarsely forward convex
    for some $E\geq 1$ and $C\geq 0$.
\end{proposition}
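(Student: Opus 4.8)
We must produce constants $E\geq 1$ and $C\geq 0$ such that $d_Z(\Gamma(v,w_1)(c),\Gamma(v,w_2)(c))\leq c\,E\,d_Z(w_1,w_2)+C$ for all $v,w_1,w_2\in Z$ and $c\in[0,1]$. Fix such $v,w_1,w_2$, write $\gamma_i=\Gamma(v,w_i)$, $L_i=d_Z(v,w_i)$ and $\delta=d_Z(w_1,w_2)$; since $\gamma_i$ is a geodesic issuing from $v$ we have $d_Z(\gamma_i(c),v)=cL_i$, and $|L_1-L_2|\leq\delta$. Let $C_0,D,E_0$ be constants for which $\Gamma$ is coarsely $(C_0,D,E_0)$-forward thin. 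The plan is to isolate two degenerate regimes handled by the triangle inequality alone, and then run a case analysis in the remaining ``long'' regime using the data supplied by forward thinness. \emph{First}, if $\min\{L_1,L_2\}\leq 2D$ then $\max\{L_1,L_2\}\leq 2D+\delta$, so $d_Z(\gamma_1(c),\gamma_2(c))\leq cL_1+cL_2\leq c\delta+4D$, which is of the required form. So assume $\min\{L_1,L_2\}>2D$; then coarse forward thinness applies to $(\gamma_1,\gamma_2)$ and supplies a convex subspace $X$ on which $\Gamma$ restricts to a geodesic $(E_0,C_0)$-coarsely convex bicombing, parameters $c_0\leq c_1$ and $c_0'\leq c_1'$, the points $p_1=\gamma_1(c_0)$, $a=\gamma_1(c_1)$, $p_2=\gamma_2(c_0')$, $b=\gamma_2(c_1')$, and the four conditions of Definition~\ref{cftd}. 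From condition~(\ref{Ex-3-2}) and $\gamma_1(0)=\gamma_2(0)=v$ we get $d_Z(p_1,p_2)\leq D$; moreover, since the arc-length-$r$ points of $\gamma_1$ and $\gamma_2$ both lie on the metric sphere of radius $r$ about $v$, the $D$-fellow-travelling of the initial segments forces these two points to be within $2D$ of each other whenever $r$ does not exceed the length of either initial segment. From condition~(\ref{Ex-3-5}), with $\gamma_i(1)=w_i$, we get $d_Z(a,w_1),\,d_Z(b,w_2),\,d_Z(a,b)\leq\delta+4D$, so the two final segments $\gamma_1|_{[c_1,1]}$, $\gamma_2|_{[c_1',1]}$ have length $\leq\delta+4D$.

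\emph{Second}, fix a large constant $K$, to be chosen at the end. If $\max\{L_1,L_2\}\leq K(\delta+D)$ then again $d_Z(\gamma_1(c),\gamma_2(c))\leq cL_1+cL_2\leq 2cK(\delta+D)$ does the job. So assume $\max\{L_1,L_2\}>K(\delta+D)$; since $|L_1-L_2|\leq\delta$ this forces \emph{both} $L_i>(K-1)(\delta+D)$, i.e.\ both geodesics are long relative to $\delta+D$. In this long regime the transition parameters are pinned down: from $d_Z(a,w_1)\leq\delta+4D$ we get $1-c_1=d_Z(a,w_1)/L_1<\tfrac12$ for $K$ large, so $c_1,c_1'>\tfrac12$; and a short computation from $d_Z(p_1,p_2)\leq D$, $|A_1-A_2|\leq D$ (where $A_i=d_Z(v,p_i)$), $d_Z(a,b)\leq\delta+4D$ and $|L_1-L_2|\leq\delta$ gives $|c_0-c_0'|,\,|c_1-c_1'|=O\!\bigl((\delta+D)/\min\{L_1,L_2\}\bigr)=O(1/K)$. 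This quantitative alignment of the transition parameters, together with the fact that $c_1,c_1'>\tfrac12$, is the structural engine of the proof.

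In the long regime I would bound $d_Z(\gamma_1(c),\gamma_2(c))$ according to where $\gamma_1(c)$ and $\gamma_2(c)$ sit in the decomposition of each geodesic into initial segment, middle segment (inside $X$), and final segment. If both lie in initial segments, comparing them at a common arc length costs at most $c|L_1-L_2|\leq c\delta$ for the reparametrisation, plus the uniform $2D$ from the fellow-travelling above. If both lie in final segments, each is within $\delta+4D$ of $a$, respectively $b$, and $d_Z(a,b)\leq\delta+4D$, giving a bound $3\delta+12D$, which is $\leq 6c\delta+12D$ because here $c\geq c_1>\tfrac12$. If both lie in the middle segment, then by condition~(\ref{Ex-3-4}) these segments are $\Gamma(p_1,a)$ and $\Gamma(p_2,b)$ and $\Gamma|_{X\times X}$ is genuinely $(E_0,C_0)$-coarsely convex, so I apply property~(\ref{qconvex}) of Definition~\ref{def:ccbicombing} to $\Gamma(p_1,a)$ and $\Gamma(p_2,b)$ at a common parameter, using $d_Z(p_1,p_2)\leq D$ and the bound $d_Z(a,b)\leq\max\{c_1,c_1'\}\delta+4D$ of condition~(\ref{Ex-3-3}); the leftover discrepancy $|u_1-u_2|\,d_Z(p_1,a)$ between the natural parameters $u_1,u_2$ at which $\gamma_1(c),\gamma_2(c)$ run over their middle segments can be rewritten as $|u_1d_Z(p_1,a)-u_2d_Z(p_1,a)|$ and estimated using $c_0,c_0'$-alignment and $u_1d_Z(p_1,a)=(c-c_0)L_1$. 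The remaining ``mixed'' cases, where $\gamma_1(c)$ and $\gamma_2(c)$ lie in different segments, reduce to the above via the triangle inequality through $p_1\approx p_2$, respectively through $a$ and $b$, using that the intermediate parameter ranges are short. Taking the maximum of the finitely many constants produced and fixing $K$ large enough yields the desired $E$ and $C$; for the analogous backward statement one argues symmetrically from Definition~\ref{cbtd}.

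The step I expect to be the main obstacle is the ``both in the middle segment'' case. The two geodesics $\Gamma(p_1,a)$ and $\Gamma(p_2,b)$ are traversed by $\gamma_1(c)$ and $\gamma_2(c)$ at genuinely different parameters, whereas the coarse-convexity inequality only compares points at equal parameters; and, crucially, one must arrange that the coefficient of $d_Z(w_1,w_2)$ comes out proportional to $c$ rather than merely bounded. The first point is handled by the $O(1/K)$ alignment of the transition parameters in the long regime (and, when a middle segment occupies only a small parameter fraction, by noting that $c_1,c_1'>\tfrac12$ then forces $c_0,c_0'$, and hence $c$, to be bounded below). The second point is exactly what the factor $\max\{c_1,c_1'\}$ in condition~(\ref{Ex-3-3}) of Definition~\ref{cftd} is built to provide, but extracting it requires the careful bookkeeping above: using $c>c_0$ to replace $d_Z(v,p_1)$ by something $\leq cL_1$, and using $c_1,c_1'>\tfrac12$ to bound the relative position of $\gamma_i(c)$ inside its middle segment by a fixed multiple of $c$.
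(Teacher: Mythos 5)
Your overall strategy differs from the paper's in a genuine way: you first reduce to a ``long regime'' $\max\{L_1,L_2\}>K(\delta+D)$ via an auxiliary threshold $K$, and then exploit the resulting relative alignment $|c_0-c_0'|,|c_1-c_1'|=O(1/K)$ and $c_1,c_1'>\tfrac12$ to simplify the segment-by-segment case analysis. The paper does not introduce $K$ at all; it works with the four cases determined by the location of $c$ relative to $c_0,c_0',c_1,c_1'$ and obtains all bounds by direct algebraic manipulation using the triangle inequality $d_Z(p_1,a)\leq d_Z(p_2,b)+d_Z(a,b)+d_Z(p_1,p_2)$ together with condition~(\ref{Ex-3-3}). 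Your long-regime reduction buys you a cleaner treatment of the final-segment case (where $c>\tfrac12$ lets you convert an $O(\delta)$ error to $O(c\delta)$), but the paper handles that case (its Case~4) without this reduction, by writing $d_Z(\gamma_i(c),\gamma_i(c_1))=c(1-c_1/c)t\leq c(1-c_1)t$ and regrouping.

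The place where I think you have a real gap is exactly the step you flag as the ``main obstacle'': the case where both $\gamma_1(c),\gamma_2(c)$ lie in the middle segments. Your plan is to control the reparametrisation discrepancy $\bigl|u_1-u_2\bigr|\,d_Z(p_1,a)$ with $u_i=(c-c_0^{(i)})/(c_1^{(i)}-c_0^{(i)})$ using the $O(1/K)$ alignment. But that alignment is a \emph{relative} statement: writing $\Delta=c_1-c_0$, $\Delta'=c_1'-c_0'$, one has $|u_1-u_2|\,d_Z(p_1,a)\leq\bigl(|\Delta-\Delta'|+|c_0-c_0'|\bigr)L_1$, and plugging in $|c_0-c_0'|,|\Delta-\Delta'|=O\bigl((\delta+D)/L\bigr)$ only yields an $O(\delta+D)$ bound, not the required $O(c\delta+D)$. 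When $c_0$ is close to $0$ (which the definition permits — the convex subspace $X$ may contain $v$), the parameter $c$ in the middle segment can be arbitrarily small, so the missing factor of $c$ is not recoverable from $c_1,c_1'>\tfrac12$ or from $c>c_0$. The $c$-proportionality in this case actually comes from a specific algebraic identity: $\frac{(c-c_0')c_1'}{c_1'-c_0'}\leq c$ when $c\leq c_1'$. The paper applies this (together with $|c_0t-c_0's|\leq 2D$ coming from fellow-travelling) both in the coarse-convexity estimate for $d_Z(a',\gamma_2(c))$ and in the reparametrisation estimate for $d_Z(a',\gamma_1(c))$, which is how the coefficient $2c\,d_Z(\gamma_1(1),\gamma_2(1))+8D$ in inequalities~\eqref{E1-2},~\eqref{E2-2} arises. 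Your closing paragraph gestures at the $\max\{c_1,c_1'\}$ factor in condition~(\ref{Ex-3-3}) being ``built to provide'' this, which is the right instinct, but the sketch as written does not extract it.

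A secondary concern: you dispose of the mixed cases ``via the triangle inequality through $p_1\approx p_2$, respectively through $a$ and $b$.'' In the paper the main mixed case (its Case~3, $\gamma_1(c)$ past $a$ while $\gamma_2(c)$ still in $X$) is not just a triangle-inequality reduction: it introduces the auxiliary points $a'=\gamma_1(cc_1)$, $b'=\gamma_2(cc_1')$ and reruns the middle-segment estimate at those points, then uses condition~(\ref{Ex-3-5}) to absorb the pieces. Without that device it is unclear how one preserves the $c$-factor across the boundary between segments. So while the big-picture structure of your argument is sensible and partly simpler, I would not consider it complete until the middle-segment and mixed estimates are actually carried out, and I expect the carried-out version to converge with the paper's algebra rather than genuinely rely on the long-regime alignment.
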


\begin{proposition}
\label{cbt}
    Let $Z$ be a metric space with a geodesic bicombing $\Gamma$. 
    If $\Gamma$ is coarsely backward thin, then $\Gamma$ is $(E,C)$-coarsely backward convex
    for some $E\geq 1$ and $C\geq 0$.
\end{proposition}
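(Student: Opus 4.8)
The plan is to prove the coarse backward convexity inequality $d_Z\bigl(\Gamma(v_1,w)(c),\Gamma(v_2,w)(c)\bigr)\le(1-c)E'\,d_Z(v_1,v_2)+C'$ by a case analysis that mirrors, in reversed time, the argument for Proposition~\ref{cft}. (A slicker route would apply Proposition~\ref{cft} to the reversed bicombing $\bar\Gamma(x,y)(t):=\Gamma(y,x)(1-t)$, but this needs the reversal of a geodesic $(E,C)$-coarsely convex bicombing to be one again, which is true but costs a short argument through Proposition~\ref{ccgccc}; the direct mirror below avoids it.) Fix $v_1,v_2,w\in Z$ and $c\in[0,1]$, write $\gamma_i=\Gamma(v_i,w)$, $L_i=d_Z(v_i,w)$, $\delta=d_Z(v_1,v_2)$, and assume $L_1\le L_2$, so $L_2-L_1\le\delta$. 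If $L_1\le 2D$ then $d_Z(\gamma_i(c),w)=(1-c)L_i$ gives $d_Z(\gamma_1(c),\gamma_2(c))\le 4D+(1-c)\delta$ directly; so assume $L_1,L_2>2D$ and apply coarse backward thinness (Definition~\ref{cbtd}) to get the convex subspace $X$, the entry points $p_1=\gamma_1(c_0)$, $p_2=\gamma_2(c_0')$, the exit points $a=\gamma_1(c_1)$, $b=\gamma_2(c_1')$, and items (\ref{Ey-3-2})--(\ref{Ey-3-5}). Record the consequences: $d_Z(a,b)\le D$ from (\ref{Ey-3-2}); $d_Z(p_1,p_2)\le\max\{1-c_0,1-c_0'\}\delta+4D$ from (\ref{Ey-3-3}); and $d_Z(v_1,p_1),\,d_Z(v_2,p_2),\,d_Z(p_1,p_2)\le\delta+4D$ from (\ref{Ey-3-5}).

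Next I would reparametrize everything by arclength measured from the common endpoint $w$: let $\sigma_i(m)$ be the point of $\gamma_i$ at distance $m$ from $w$ along $\gamma_i$, for $m\in[0,L_i]$. Since $\sigma_1\bigl((1-c)L_1\bigr)=\gamma_1(c)$ and $d_Z\bigl(\sigma_2((1-c)L_1),\gamma_2(c)\bigr)=(1-c)(L_2-L_1)\le(1-c)\delta$, it suffices to bound $d_Z(\sigma_1(m),\sigma_2(m))$ for $m\in[0,L_1]$ by $(m/L_1)E_0\delta+C_0$ with uniform constants. Put $\alpha_1=d_Z(w,a)$, $\alpha_2=d_Z(w,b)$ (the arclengths at which the geodesics, walked from $w$, enter $X$) and $\beta_1=d_Z(w,p_1)$, $\beta_2=d_Z(w,p_2)$ (at which they leave it); these satisfy $|\alpha_1-\alpha_2|\le d_Z(a,b)\le D$, $|\beta_1-\beta_2|\le d_Z(p_1,p_2)$, and $\beta_i\ge L_i-\delta-4D$ by (\ref{Ey-3-5}). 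For $m<\max\{\alpha_1,\alpha_2\}$, at least one of $\sigma_1(m),\sigma_2(m)$ lies on a terminal segment $\gamma_i|_{[c_1^{(i)},1]}$, whose image by (\ref{Ey-3-2}) is within Hausdorff distance $D$ of the other's; combining this with the reverse triangle inequality $\bigl|d_Z(w,x)-d_Z(w,y)\bigr|\le d_Z(x,y)$ (which transfers arclength-from-$w$ information between the two geodesics) yields $d_Z(\sigma_1(m),\sigma_2(m))\le 2D$. For $m\ge\max\{\alpha_1,\alpha_2\}$, the point $\sigma_i(m)$ lies either on the $X$-part $\Gamma(p_i,\cdot)$ of $\gamma_i$ --- identified by (\ref{Ey-3-4}) --- or on its initial segment toward $v_i$; in each of the resulting combinations I would estimate $d_Z(\sigma_1(m),\sigma_2(m))$ using the sharpest of three ``anchored'' comparisons: through the exit points (so $d_Z(\sigma_1(m),\sigma_2(m))\le d_Z(\sigma_1(m),a)+d_Z(a,b)+d_Z(b,\sigma_2(m))$, good when $m$ is near $\max\{\alpha_1,\alpha_2\}$), through the entry points $p_1,p_2$ (good when $m$ is near $\min\{\beta_1,\beta_2\}$ or beyond), or through the equal-parameter form $d_Z(\Gamma(x_1,y_1)(s),\Gamma(x_2,y_2)(s))\le(1-s)E\,d_Z(x_1,x_2)+sE\,d_Z(y_1,y_2)+C$ of the coarse convexity of $\Gamma|_{X\times X}$, together with the estimates above; throughout one also uses the dichotomy ``$L_1\le 2(\delta+4D)$'' (handle by the trivial $d_Z(\sigma_1(m),\sigma_2(m))\le 2m$) versus ``$L_1>2(\delta+4D)$'' (then $\beta_i\ge L_1/2$, which pins the bicombing parameter of $\Gamma|_X$ to a constant multiple of $m/L_1$). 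Assembling the finitely many cases produces uniform $E_0\ge 1$, $C_0\ge 0$, and then $E'=E_0+1$, $C'=\max\{C_0,4D\}$ work.

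I expect the main obstacle to be exactly this last estimate in the regime $m\ge\max\{\alpha_1,\alpha_2\}$ with both $\sigma_i(m)$ on the $X$-parts: the geodesics $\gamma_1,\gamma_2$ and the restricted bicombing $\Gamma|_{X\times X}$ carry three a priori unrelated parametrizations, and converting the convexity inequality of $\Gamma|_X$ --- whose natural parameter $s$ satisfies $s=(\beta_i-m)/(\beta_i-\alpha_i)$ --- into a bound with the required factor $(1-c)=m/L_1$ forces one to track how arclength-from-$w$ sits inside each parametrization via the identities $d_Z(w,p_i)=(1-c_0^{(i)})L_i$, $d_Z(w,a)=(1-c_1)L_1$, etc., and to use the sharp estimate (\ref{Ey-3-3}) on $d_Z(p_1,p_2)$ (rather than merely (\ref{Ey-3-5})) precisely when the comparison point sits near the entry side of $X$. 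The underlying geometric picture is clean --- fellow-traveling on the thin terminal part, genuine coarse convexity inside the fat part $X$, coarse betweenness of $p_1,p_2$ on the tails toward $v_1,v_2$ --- so the content of the proof is organizing these three mechanisms so that their bounds patch into a single affine function of $1-c$; that bookkeeping is the work. The same scheme, with ``initial'' and ``terminal'' interchanged and (\ref{Ex-3-2})--(\ref{Ex-3-5}) in place of (\ref{Ey-3-2})--(\ref{Ey-3-5}), is the proof of Proposition~\ref{cft}.
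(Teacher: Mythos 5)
Your proposal takes a genuinely different route from the paper, but it stops short of the work that constitutes the proof, and you say so yourself.

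\textbf{On the route.} The paper proves Proposition~\ref{cbt} by a direct case analysis on the bicombing parameter $c$ relative to the breakpoints $c_0,c_1,c_0',c_1'$: four cases, each handled by writing $\gamma_i(c)$ via Definition~\ref{cbtd}\eqref{Ey-3-4} and interposing a cross-point $a'=\Gamma(p_1,a)\left(\tfrac{c-c_0'}{c_1'-c_0'}\right)$ so that the coarse convexity of $\Gamma\rest X$ can be applied at equal parameter. Your proposal instead reparametrizes both geodesics by arclength from the shared endpoint $w$, reduces to comparing $\sigma_1(m)$ and $\sigma_2(m)$ on $[0,L_1]$, and then does cases on $m$ against the entry/exit arclengths $\alpha_i,\beta_i$. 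The reduction $d_Z(\gamma_1(c),\gamma_2(c))\le d_Z(\sigma_1(m),\sigma_2(m))+(1-c)\delta$ with $m=(1-c)L_1$ is correct, your preliminary consequences of \eqref{Ey-3-2}--\eqref{Ey-3-5} are correct, and you even handle the $L_1\le 2D$ edge case (where Definition~\ref{cbtd} says nothing) more explicitly than the paper does. The arclength framing also makes the bound $\le 2D$ on the fellow-traveling tail $m<\max\{\alpha_1,\alpha_2\}$ transparent.

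\textbf{On the gap.} The case you label ``$m\ge\max\{\alpha_1,\alpha_2\}$ with both $\sigma_i(m)$ on the $X$-parts'' is the analogue of the paper's Case~\ref{EyitemI}), and it is where essentially all of the proof lives: the paper spends roughly a page there estimating $d_Z(a',\gamma_2(c))$ by coarse convexity of $\Gamma\rest X$ and $d_Z(a',\gamma_1(c))$ by a two-sided comparison of the reparametrization coefficients $\tfrac{c-c_0}{c_1-c_0}$ and $\tfrac{c-c_0'}{c_1'-c_0'}$, together with the sharp form of \eqref{Ey-3-3}. Your sketch names the three relevant mechanisms and the intended anchors (exit points, entry points, equal-parameter convexity), but the crux --- that arclength-from-$w$ gives \emph{different} parameters $s_1\ne s_2$ inside $\Gamma(p_1,a)$ and $\Gamma(p_2,b)$, so the convexity inequality cannot be applied directly to $\sigma_1(m),\sigma_2(m)$ without the same kind of cross-point correction the paper uses --- is identified but not resolved. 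You flag this yourself (``I expect the main obstacle to be exactly this last estimate''), and you assert without computation that the finitely many sub-cases assemble into a single affine bound $(m/L_1)E_0\delta+C_0$. That assembly, with uniform constants, is precisely the content of the proposition; as written the proposal is a plan with the central estimate outstanding rather than a proof.
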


\begin{proof}[Proof of Proposition~\ref{cft}]
For $v,w_1,w_2 \in Z$, 
let $\gamma_1=\Gamma(v,w_1)$ and $\gamma_2=\Gamma(v,w_2)$.
Let $p_i$ be the initial point of $\gamma_i$ in $X$
and let $a$ be the terminal point of $\gamma_1$ in $X$
and let $b$ be the terminal point of $\gamma_2$ in $X$.
We set 
\[
p_1=\gamma_1(c_0), \quad
p_2=\gamma_2(c_0^{\prime}), \quad
a=\gamma_1(c_1), \quad
b=\gamma_2(c_1^{\prime}),
\]
where $c_0, c_1, c_0^{\prime}, c_1^{\prime} \in [0,1]$.
Without loss of generality, assume that $c_1 \leq c_1^{\prime}$.
Set $t=d_Z(\gamma_1(0),\gamma_1(1))$ and $s=d_Z(\gamma_2(0),\gamma_2(1))$.
We will show that 
there exist $E_{*} \geq 1$ and $C_* \geq 0$ depending only on $E$,$C$, and $D$ such that
for all $c \in [0,1]$, 
\begin{align*}
d_Z(\gamma_1(c), \gamma_2(c)) \leq cE_*d_Z(\gamma_1(1), \gamma_2(1))+C_*.
\end{align*}
We divide the proof into four cases based on the location of the parameter $c$ of $\gamma_1$ and $\gamma_2$: 
\begin{enumerate}
\renewcommand{\labelenumi}{\arabic{enumi}).}
\item \label{Exitem0} 
    $c \in [0,c_0]$ or $c \in [0,c_0']$.
\item \label{ExitemI} 
    $c \in [c_0,c_1]$ and $c \in [c_0',c_1']$ (see Figure \ref{FI}).
\item \label{ExitemII}
    $c \in [c_1,1]$ and $c \in [c_0',c_1']$ (see Figure \ref{FII}).
\item \label{ExitemIII}
    $c \in [c_1,1]$ and $c \in [c_1',1]$ (see Figure \ref{FIII}).
\end{enumerate} 
In Case \ref{Exitem0}), we suppose $c \in [0,c_0]$. 
By (\ref{Ex-3-2}) of Definition~\ref{cftd}, there exists $c' \leq c_0'$ such that
\begin{align*}
d_Z(\gamma_1(c),\gamma_2(c')) \leq 2D. 
\end{align*}
By the triangle inequality, we have
\begin{align*}
c's
&= d_Z(\gamma_2(0),\gamma_2(c')) \\
&\geq d_Z(\gamma_2(0),\gamma_1(c)) - d_Z(\gamma_1(c),\gamma_2(c')) \\
&\geq d_Z(\gamma_2(0),\gamma_1(c)) - 2D
=ct- 2D
\end{align*}
On the other hand, since $\gamma_2$ is a geodesic segment, 
\begin{align*}
d_Z(\gamma_2(c),\gamma_2(c'))=(c-c')s
\end{align*}
holds. 
Combing these inequalities, we have 
\begin{align*}
d_Z(\gamma_1(c),\gamma_2(c) 
&\leq d_Z(\gamma_1(c),\gamma_2(c')) +d_Z(\gamma_2(c'),\gamma_2(c)) \\
&\leq 2D + (c-c')s \\
&\leq cs-c's + 2D \\
&\leq cs-ct+2D +2D \\
&\leq c|t-s| +4D \\
&\leq cd_Z(\gamma_1(1),\gamma_2(1)) +4D. 
\end{align*}
\par 
In Case \ref{ExitemI}), by Definition~\ref{cftd}\eqref{Ex-3-4}, we have that
\[ \gamma_1(c)=\Gamma(p_1,a)\left( \dfrac{c-c_0}{c_1-c_0}\right), \quad \text{and} \quad 
\gamma_2(c)  =\Gamma(p_2,b)\left( \dfrac{c-c_0^{\prime}}{c_1^{\prime}-c_0^{\prime}}\right). \]
\begin{figure}[h]
\begin{tikzpicture}
\draw(0,0.5)--++(2,0.5)--++(0,-2)--++(-2,0.5)--cycle;
\draw(-3,0)--++(3,0.5);
\draw(-3,0)--++(3,-0.5);
\draw(-3,0)node[left]{$\gamma_1(0)$};
\draw(0,0.5)node[above]{$p_1$};
\draw(0,-0.5)node[below]{$p_2$};
\fill(-3,0)circle(0.06);
\draw(2,1)--++(1,1.5);
\draw(2,1)node[above]{$a$};
\draw(3,2.5)node[above]{$\gamma_1(1)$};
\fill(3,2.5)circle(0.06);
\draw(2,-1)--++(1,-1.5);
\draw(2,-1)node[right]{$b$};
\draw(3,-2.5)node[below]{$\gamma_2(1)$};
\fill(3,-2.5)circle(0.06);
\draw(1,0.75)node[above]{$\gamma_1(c)$};
\fill(1,0.75)circle(0.06);
\draw(1,-0.75)node[below]{$\gamma_2(c)$};
\fill(1,-0.75)circle(0.06);
\end{tikzpicture}
\caption{Proof of Proposition~\ref{cft}-Case \ref{ExitemI})}
\label{FI}
\end{figure}
Note that
\[p_1 =\Gamma(p_1,a)(0), \quad p_2  =\Gamma(p_2,b)\left( 0 \right),\quad  a =\Gamma(p_1,a)\left(1\right), \quad \text{and}\quad  b  =\Gamma(p_2,b)\left(1\right) \]
We define $a^{\prime}$ to be
\begin{align*}
a^{\prime}\coloneqq \Gamma(p_1,a)\left( \dfrac{c-c_0^{\prime}}{c_1^{\prime}-c_0^{\prime}}\right).
\end{align*}
First, since $\Gamma$ is a geodesic $(E,C)$-coarsely convex bicombing, we have
\begin{align}
d_Z(a^{\prime}, \gamma_2(c)) 
&=d \left( \Gamma(p_1,a)\left( \dfrac{c-c_0^{\prime}}{c_1^{\prime}-c_0^{\prime}}\right) , \Gamma(p_2,b)\left( \dfrac{c-c_0^{\prime}}{c_1^{\prime}-c_0^{\prime}}\right) \right)  \notag \\
&\leq \dfrac{c-c_0^{\prime}}{c_1^{\prime}-c_0^{\prime}} \cdot Ed_Z(\Gamma(p_1,a)(1), \Gamma(p_2,b)(1))  \notag \\
&\ + \left( 1- \dfrac{c-c_0^{\prime}}{c_1^{\prime}-c_0^{\prime}} \right) \cdot Ed_Z(\Gamma(p_1,a)(0), \Gamma(p_2,b)(0)) +C\notag \\
&= \dfrac{c-c_0^{\prime}}{c_1^{\prime}-c_0^{\prime}} \cdot Ed_Z(a, b) +\left( 1- \dfrac{c-c_0^{\prime}}{c_1^{\prime}-c_0^{\prime}} \right)Ed_Z(p_1,p_2) +C\notag \\
&\leq \dfrac{cc_1^{\prime}-c_0^{\prime}c_1^{\prime}}{c_1^{\prime}-c_0^{\prime}} \cdot Ed_Z(\gamma_1(1), \gamma_2(1)) +4ED +2ED +C\label{C2-0} \\
&\leq c \cdot Ed_Z(\gamma_1(1), \gamma_2(1)) +6ED +C, \label{C2-1}
\end{align}
where \eqref{C2-0} follows from Definition~\ref{cftd} \eqref{Ex-3-2} and \eqref{Ex-3-3}, and
\eqref{C2-1} follows from $c_0^{\prime} \leq c \leq c_1^{\prime}$.
\par 
Next, we consider the distance between $a^{\prime}$ and $\gamma_1(c)$.
Then
\begin{align*}
d_Z(a^{\prime}, \gamma_1(c)) 
&=d \left( \Gamma(p_1,a)\left( \dfrac{c-c_0^{\prime}}{c_1^{\prime}-c_0^{\prime}}\right) , \Gamma(p_1,a)\left( \dfrac{c-c_0}{c_1-c_0}\right) \right)  \notag \\
&=\left| \dfrac{c-c_0^{\prime}}{c_1^{\prime}-c_0^{\prime}} -\dfrac{c-c_0}{c_1-c_0}\right| \cdot d_Z(p_1,a) \notag \\
&=\left| \dfrac{c-c_0^{\prime}}{c_1^{\prime}-c_0^{\prime}} -\dfrac{c-c_0}{c_1-c_0}\right| \cdot (c_1-c_0)t. \notag 
\end{align*}
Since $\gamma_1$ and $\gamma_2$ are geodesic segment, by the triangle inequality, we have
\begin{align*}
    c_0t-c_0's 
    &=d_Z(\gamma_1(0),\gamma_1(c_0t))-d_Z(\gamma_2(0),\gamma_2(c_0's)) \\
    &\leq d_Z(\gamma_1(c_0t),\gamma_2(c_0's)) \\
    &=d_Z(p_1,p_2). 
\end{align*}
By Item \eqref{Ex-3-2} of Definition~\ref{cftd}, we have $|c_0t-c_0's| \leq 2D$.
By the triangle inequality $ (c_1-c_0)t  \leq (c_1^{\prime}-c_0^{\prime})s+ d_Z(a,b)+ d_Z(p_1,p_2)$, we have
\begin{align}
\left( \dfrac{c-c_0^{\prime}}{c_1^{\prime}-c_0^{\prime}} -\dfrac{c-c_0}{c_1-c_0} \right)(c_1-c_0)t 
&=\dfrac{c-c_0^{\prime}}{c_1^{\prime}-c_0^{\prime}} (c_1-c_0)t  -\dfrac{c-c_0}{c_1-c_0}(c_1-c_0)t   \notag \\
&\leq \dfrac{c-c_0^{\prime}}{c_1^{\prime}-c_0^{\prime}}  \{ (c_1^{\prime}-c_0^{\prime})s+ d_Z(a,b) +d_Z(p_1,p_2)\} -(c-c_0)t   \notag \\
&\leq (c-c_0^{\prime})s -(c-c_0)t +\dfrac{c-c_0^{\prime}}{c_1^{\prime}-c_0^{\prime}} d_Z(a,b) +\dfrac{c-c_0^{\prime}}{c_1^{\prime}-c_0^{\prime}} d_Z(p_1,p_2) \notag\\
&\leq c(s-t) + (c_0t-c_0^{\prime}s) +cd_Z(\gamma_1(1),\gamma_2(1)) +4D +2D \label{E1-1}\\
&\leq c \cdot \{2d_Z(\gamma_1(1), \gamma_2(1))\} +8D. \label{E1-2}
\end{align}
where \eqref{E1-1} follows from Definition~\ref{cftd} \eqref{Ex-3-2} and \eqref{Ex-3-3}, and \eqref{E1-2} follows from $|c_0t-c_0's| \leq 2D$.
Similarly, by the triangle inequality $ (c_1^{\prime}-c_0^{\prime})s - d_Z(a,b)-d_Z(p_1,p_2)  \leq  (c_1-c_0)t$, we have
\begin{align}
\left( \dfrac{c-c_0}{c_1-c_0} - \dfrac{c-c_0^{\prime}}{c_1^{\prime}-c_0^{\prime}} \right) (c_1-c_0)t 
&=\dfrac{c-c_0}{c_1-c_0}(c_1-c_0)t  - \dfrac{c-c_0^{\prime}}{c_1^{\prime}-c_0^{\prime}} (c_1-c_0)t  \notag \\
&\leq (c-c_0)t -\dfrac{c-c_0^{\prime}}{c_1^{\prime}-c_0^{\prime}}  \{ (c_1^{\prime}-c_0^{\prime})s - d_Z(a,b)-d_Z(p_1,p_2)\} \notag \\
&= (c-c_0)t - (c-c_0^{\prime})s +\dfrac{c-c_0^{\prime}}{c_1^{\prime}-c_0^{\prime}} d_Z(a,b) +\dfrac{c-c_0^{\prime}}{c_1^{\prime}-c_0^{\prime}}d_Z(p_1,p_2) \notag \\
&\leq c(t-s) + (c_0's-c_0t) +cd_Z(\gamma_1(1),\gamma_2(1)) +4D +2D \label{E2-1}\\
&\leq c \cdot \{2d_Z(\gamma_1(1), \gamma_2(1))\} +8D. \label{E2-2}
\end{align}
Inequality \eqref{E2-1} follows from Item \eqref{Ex-3-2} and \eqref{Ex-3-3} of Definition~\ref{cftd}. 
Inequality \eqref{E2-2} follows from $|c_0t-c_0's| \leq 2D$.
Then we have
\begin{align}
d_Z(a^{\prime}, \gamma_1(c))
&=\left| \dfrac{c-c_0^{\prime}}{c_1^{\prime}-c_0^{\prime}} -\dfrac{c-c_0}{c_1-c_0}\right| \cdot (c_1-c_0)t \notag \\
&\leq c \cdot \{2d_Z(\gamma_1(1), \gamma_2(1))\} +8D. \label{C2-2}
\end{align}
Combining \eqref{C2-1} and \eqref{C2-2} yields
\begin{align*}
d_Z(\gamma_1(c),\gamma_2(c)) 
&\leq d_Z(\gamma_1(c),a^{\prime}) + d_Z(a^{\prime}, \gamma_2(c)) \\ 
& \leq c \cdot \{2d_Z(\gamma_1(1), \gamma_2(1))\} +8D + c \cdot Ed_Z(\gamma_1(1), \gamma_2(1)) +6ED +C \\
& = c(E+2) d_Z(\gamma_1(1), \gamma_2(1)) +6ED +8D +C.
\end{align*}
\par
In Case \ref{ExitemII}), we supposed that $c \in [c_1,1]$ and $c \in [c_0',c_1']$,
where $a=\gamma_1(c_1)$ and $b=\gamma_2(c^{\prime}_1)$. Let $
a^{\prime} = \gamma_1(cc_1)$ and , $b^{\prime} = \gamma_2(cc_1^{\prime})$. Applying the same argument as in Case \ref{ExitemI}), we obtain that
\begin{align}
\label{Q1}
d_Z(a^{\prime}, b^{\prime})
&\leq c(E+2) d_Z(a,b) +2ED +4D +C.
\end{align}
Hence, we have
\begin{align}
&d_Z(\gamma_1(c), \gamma_2(c)) \notag \\
& \leq d_Z(\gamma_1(c), a^{\prime}) + d_Z(a^{\prime},b^{\prime}) +d_Z(b^{\prime}, \gamma_2(c)) \notag \\
& = d_Z(\gamma_1(c), \gamma_1(cc_1) )+ d_Z(\gamma_1(cc_1),\gamma_2(cc_1'))+d_Z(\gamma_2(cc_1'), \gamma_2(c)) \notag \\
&\leq c(1-c_1)t + c(E+2) d_Z(a,b) +2ED +4D +C + c(1-c_1^{\prime})s \label{Q1-1}\\
&= cd_Z(\gamma_1(1),\gamma_1(c_1)) +c(E+2) d_Z(a,b) + 2ED + 4D +C + cd_Z(\gamma_2(c_1^{\prime}), \gamma_2(1)) \notag \\
&\leq c(E+2) \{ d_Z(\gamma_1(1), a) + d_Z(a,b) + d_Z(b, \gamma_2(1)) \} +2ED +4D +C \notag \\
&\leq c(E+2) \{d_Z(\gamma_1(1),\gamma_2(1)) +4D\} +2ED +4D  +C \label{Q1-2}\\
&\leq c(E+2) d_Z(\gamma_1(1),\gamma_2(1)) +4D(E+2) +2ED +4D  +C \notag \\
&= c(E+2) d_Z(\gamma_1(1),\gamma_2(1)) +6ED+12D  +C, \notag 
\end{align}
where \eqref{Q1-1} follows from \eqref{Q1}, and 
\eqref{Q1-2} follows from Definition~\ref{cftd} \eqref{Ex-3-5}.
\begin{figure}[t]
\begin{tikzpicture}
\draw(0,0.5)--++(2,0.5)--++(0,-2)--++(-2,0.5)--cycle;
\draw(-3,0)--++(3,0.5);
\draw(-3,0)--++(3,-0.5);
\draw(-3,0)node[left]{$\gamma_1(0)$};
\draw(0,0.5)node[above]{$p_1$};
\draw(0,-0.5)node[below]{$p_2$};
\fill(-3,0)circle(0.06);
\draw(2,1)--++(1,1.5);
\draw(2,1)node[above]{$a$};
\draw(3,2.5)node[above]{$\gamma_1(1)$};
\fill(3,2.5)circle(0.06);
\draw(2,-1)--++(1,-1.5);
\draw(2,-1)node[right]{$b$};
\draw(2.5,-1.75)node[right]{$\gamma_2(c)$};
\fill(2.5,-1.75)circle(0.06);;
\draw(3,-2.5)node[below]{$\gamma_2(1)$};
\fill(3,-2.5)circle(0.06);
\draw(0.5,0.625)node[below]{$a'$};
\fill(0.5,0.625)circle(0.06);
\draw(1,0.75)node[above]{$\gamma_1(c)$};
\fill(1,0.75)circle(0.06);
\draw(1,-0.75)node[below]{$b'$};
\fill(1,-0.75)circle(0.06);
\end{tikzpicture}
\caption{Proof of Proposition~\ref{cft}-Case \ref{ExitemII})}
\label{FII}
\end{figure}

Case \ref{ExitemIII}). Supposed that $c \in [c_1,1]$ and $c \in [c_1',1]$.
Note that $c_1 \leq c^{\prime}_1 \leq c$.
Then, we have
\begin{align}
&d_Z(\gamma_1(c),\gamma_2(c))  \notag \\
&\leq d_Z(\gamma_1(c),a) + d_Z(a,b) + d_Z(b, \gamma_2(c))  \notag \\
&=d_Z(\gamma_1(c),\gamma_1(c_1)) + d_Z(a,b)+ d_Z(\gamma_2(c^{\prime}_1),\gamma_2(c))  \notag\\
&=c\left(1-\dfrac{c_1}{c} \right)t + c^{\prime}_1d_Z(\gamma_1(1),\gamma_2(1)) +4D + c\left( 1-\dfrac{c^{\prime}_1}{c} \right)s  \label{P1-1}\\
&\leq c(1-c_1)t + cd_Z(\gamma_1(1),\gamma_2(1))+ c(1-c^{\prime}_1)s +4D \notag \\
&= c \{ d_Z(\gamma_1(1),a) + d_Z(b,\gamma_2(1)) \} + cd_Z(\gamma_1(1),\gamma_2(1)) +4D \notag \\
&\leq c \{ d_Z(\gamma_1(1),a) + d_Z(a,b)+d_Z(b,\gamma_2(1)) \} + cd_Z(\gamma_1(1),\gamma_2(1)) +4D \notag\\
&\leq c \{ d_Z(\gamma_1(1),\gamma_2(1)) +4D\} + cd_Z(\gamma_1(1),\gamma_2(1)) +4D \label{P1-2}\\
&\leq c \{2 d_Z(\gamma_1(1),\gamma_2(1))\} +8D. \notag
\end{align}
where \eqref{P1-1} follows from Definition~\ref{cftd} \eqref{Ex-3-3}, and \eqref{P1-2} follows from Definition~\ref{cftd} \eqref{Ex-3-5}.
\par
Therefore, $\gamma_1$ and $\gamma_2$ satisfy that, for any $c \in [0,1]$, 
\begin{align*}
    d_Z(\gamma_1(c), \gamma_2(c)) 
    \leq c(E+2) d_Z(\gamma_1(1),\gamma_2(1)) +6ED+12D  +C 
\end{align*}
This completes the proof. 
\begin{figure}[h]
\begin{tikzpicture}
\draw(0,0.5)--++(2,0.5)--++(0,-2)--++(-2,0.5)--cycle;
\draw(-3,0)--++(3,0.5);
\draw(-3,0)--++(3,-0.5);
\draw(-3,0)node[left]{$\gamma_1(0)$};
\draw(0,0.5)node[above]{$p_1$};
\draw(0,-0.5)node[below]{$p_2$};
\fill(-3,0)circle(0.06);
\draw(2,1)--++(1,1.5);
\draw(2,1)node[above]{$a$};
\draw(3,2.5)node[above]{$\gamma_1(1)$};
\fill(3,2.5)circle(0.06);
\draw(2,-1)--++(1,-1.5);
\draw(2,-1)node[right]{$b$};
\fill(2.5,-1.75)circle(0.06);;
\draw(3,-2.5)node[below]{$\gamma_2(1)$};
\fill(3,-2.5)circle(0.06);
\draw(2.5,1.5)node[right]{$\gamma_1(c)$};
\fill(2.5,1.75)circle(0.06);
\draw(3,-1.5)node{$\gamma_2(c)$};
\fill(2.5,-1.75)circle(0.06);
\end{tikzpicture}
\caption{Proof of Proposition~\ref{cft}-Case \ref{ExitemIII})}
\label{FIII}
\end{figure}
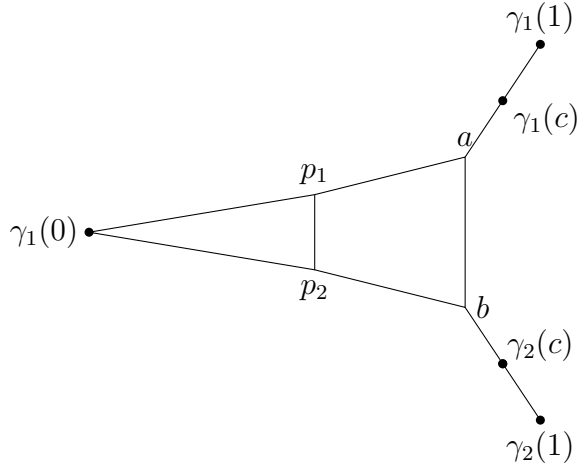
\end{proof}

\begin{proof}[Proof of Proposition~\ref{cbt}]
For $v,w_1,w_2 \in Z$, 
let $\gamma_1=\Gamma(v,w_1)$ and $\gamma_2=\Gamma(v,w_2)$.
Let $p_i$ be the initial point of $\gamma_i$ in $X$
and let $a$ be the terminal point of $\gamma_1$ in $X$
and let $b$ be the terminal point of $\gamma_2$ in $X$.
We set 
\[
p_1=\gamma_1(c_0), \quad
p_2=\gamma_2(c_0^{\prime}), \quad
a=\gamma_1(c_1), \quad
b=\gamma_2(c_1^{\prime}),
\]
where $c_0, c_1, c_0^{\prime}, c_1^{\prime} \in [0,1]$.
Without loss of generality, assume that $1-c_0 \leq 1-c_0^{\prime}$.
Set $t=d_Z(\gamma_1(0),\gamma_1(1))$ and $s=d_Z(\gamma_2(0),\gamma_2(1))$.
We will show that 
there exist $E_{*} \geq 1$ and $C_* \geq 0$ depending only on $E$, $C$, and $D$ such that
for all $c \in [0,1]$, 
\begin{align*}
d_Z(\gamma_1(c), \gamma_2(c)) \leq (1-c)E_*d_Z(\gamma_1(0), \gamma_2(0))+C_*.
\end{align*}
We divide the proof into the following cases based on the location of the parameter $c$ of $\gamma_1$ and $\gamma_2$: 
\begin{enumerate}
\renewcommand{\labelenumi}{\arabic{enumi}).}
\item \label{Eyitem0} 
    $c \in [c_1,1]$ or $c \in [c_1',1]$.
\item \label{EyitemI} 
    $c \in [c_0,c_1]$ and $c \in [c_0',c_1']$ (see Figure \ref{FyI}).
\item \label{EyitemII}
    $c \in [0,c_0]$ and $c \in [c_0',c_1']$ (see Figure \ref{FyII}).
\item \label{EyitemIII}
    $c \in [0,c_0]$ and $c \in [0,c_0']$ 
\end{enumerate} 
In Case \ref{Eyitem0}). we suppose $c \in [c_1,1]$. 
By Item \eqref{Ey-3-2} of Definition~\ref{cbtd}, there exists $c' \leq c_0'$ such that
\begin{align*}
d_Z(\gamma_1(c),\gamma_2(c')) \leq 2D. 
\end{align*}
By the triangle inequality, we have
\begin{align*}
c's
&= d_Z(\gamma_2(0),\gamma_2(c')) \\
&\geq d_Z(\gamma_2(0),\gamma_1(c)) - d_Z(\gamma_1(c),\gamma_2(c')) \\
&\geq d_Z(\gamma_2(0),\gamma_1(c)) - 2D
=ct- 2D
\end{align*}
On the other hand, since $\gamma_2$ is a geodesic segment, 
\begin{align*}
d_Z(\gamma_2(c),\gamma_2(c'))=(c-c')s
\end{align*}
holds. 
Combing these inequalities, we have 
\begin{align*}
d_Z(\gamma_1(c),\gamma_2(c) 
&\leq d_Z(\gamma_1(c),\gamma_2(c')) +d_Z(\gamma_2(c'),\gamma_2(c)) \\
&\leq 2D + (c-c')s \\
&\leq cs-c's + 2D \\
&\leq cs-ct+2D +2D \\
&\leq c|t-s| +4D \\
&\leq cd_Z(\gamma_1(1),\gamma_2(1)) +4D. 
\end{align*}
\par 
In Case \ref{EyitemI}).  Definition~\ref{cbtd}\eqref{Ey-3-4} implies that
\[
\gamma_1(c) =\Gamma(p_1,a)\left( \dfrac{c-c_0}{c_1-c_0}\right) \quad \text{and} \quad 
\gamma_2(c)  =\Gamma(p_2,a)\left( \dfrac{c-c_0^{\prime}}{c_1^{\prime}-c_0^{\prime}}\right).
\]
Note that
\[ p_1=\Gamma(p_1,a)(0), \quad p_2 =\Gamma(p_2,b)\left( 0 \right), \quad  a =\Gamma(p_1,a)\left(1\right), \quad  \text{and} \quad b =\Gamma(p_2,b)\left(1\right).\]
Define $a^{\prime}$ to be
\begin{align*}
a^{\prime}\coloneqq \Gamma(p_1,a)\left( \dfrac{c-c_0^{\prime}}{c_1^{\prime}-c_0^{\prime}}\right).
\end{align*}
First, since $\Gamma$ is a geodesic $(E,C)$-coarsely convex bicombing, we have
\begin{align}
d_Z(a^{\prime}, \gamma_2(c)) 
&=d_Z \left(\Gamma(p_1,a)\left( \dfrac{c-c_0^{\prime}}{c_1^{\prime}-c_0^{\prime}}\right) , \Gamma(p_2,a)\left( \dfrac{c-c_0^{\prime}}{c_1^{\prime}-c_0^{\prime}}\right) \right)  \notag \\
&\leq \dfrac{c-c_0^{\prime}}{c_1^{\prime}-c_0^{\prime}} \cdot Ed_Z(\Gamma(p_1,a)(1),\Gamma(p_2,b)(1)) \notag \\
&\ +\left(1-\dfrac{c-c_0'}{c_1^{\prime}-c_0^{\prime}}\right) \cdot Ed_Z(\Gamma(p_1,a)(0),\Gamma(p_2,b)(0)) +C\notag \\
&=\dfrac{c-c_0^{\prime}}{c_1^{\prime}-c_0^{\prime}} \cdot Ed_Z(a,b) +\left(1-\dfrac{c-c_0'}{c_1^{\prime}-c_0^{\prime}}\right) \cdot Ed_Z(p_1,p_2) +C\notag \\
&\leq 2ED + \dfrac{(1-c)(1-c_0')-(1-c_1')(1-c_0')}{(1-c_0')-(1-c_1')} \cdot Ed_Z(\gamma_1(0), \gamma_2(0)) +4ED +C\label{C2y-0} \\
&\leq (1-c) \cdot Ed_Z(\gamma_1(0), \gamma_2(0)) +6ED +C, \label{C2y-1}
\end{align}
where \eqref{C2y-0} follows from 
Definition~\ref{cbtd} \eqref{Ey-3-2} and \eqref{Ey-3-3}, and 
\eqref{C2y-1} follows from $1-c_0 \leq 1-c_0'$.
\par
On the other hand, $d_Z(a^{\prime},\gamma_1(c))$ can be estimated as follows.
First, since $\gamma_1$ and $\gamma_2$ are geodesic segment, by the triangle inequality, we have
\begin{align*}
    |(1-c_1)t-(1-c_1')s|
    &= |d_Z(\gamma_1(c_1t),\gamma_1(1))-d_Z(\gamma_2(c_1's),\gamma_2(1))| \\
    &\leq d_Z(\gamma_1(c_1t),\gamma_2(c_1's)) \\
    &=d_Z(a,b). 
\end{align*}
By Definition~\ref{cbtd} \eqref{Ey-3-2}, we have $|(1-c_1)t-(1-c_1')s| \leq 2D$.
\begin{align*}
d_Z(a^{\prime}, \gamma_1(c)) 
&=d_Z \left( \Gamma(p_1,a)\left( \dfrac{c-c_0^{\prime}}{c_1^{\prime}-c_0^{\prime}}\right) , \Gamma(p_1,a)\left( \dfrac{c-c_0}{c_1-c_0}\right) \right)  \notag \\
&=\left| \dfrac{c-c_0^{\prime}}{c_1^{\prime}-c_0^{\prime}} -\dfrac{c-c_0}{c_1-c_0}\right| \cdot d_Z(p_1,a) \notag \\
&=\left| \dfrac{c-c_0^{\prime}}{c_1^{\prime}-c_0^{\prime}} -\dfrac{c-c_0}{c_1-c_0}\right| \cdot (c_1-c_0)t. \notag 
\end{align*}
Since $(c_1-c_0)t =d_Z(p_1,a)$ and $(c_1'-c_0')s=d_Z(p_2,a)$,  
the triangle inequality implies $ (c_1-c_0)t  \geq (c_1^{\prime}-c_0^{\prime})s - d_Z(a,b) -d_Z(p_1,p_2)$. 
Therefore, 
\begin{align}
\left( \dfrac{c-c_0^{\prime}}{c_1^{\prime}-c_0^{\prime}} -\dfrac{c-c_0}{c_1-c_0} \right) (c_1-c_0)t 
&=\left\{ \left(1-\dfrac{c-c_0}{c_1-c_0}\right) - \left(1-\dfrac{c-c_0^{\prime}}{c_1^{\prime}-c_0^{\prime}}\right)  \right\} (c_1-c_0)t  \notag \\
&=\left( \dfrac{c_1-c}{c_1-c_0} - \dfrac{c_1'-c}{c_1^{\prime}-c_0^{\prime}} \right) (c_1-c_0)t  \notag \\
&= (c_1-c)t- \dfrac{c_1'-c}{c_1^{\prime}-c_0^{\prime}} (c_1-c_0)t \notag \\
&\leq (c_1-c)t-  \dfrac{c_1'-c}{c_1^{\prime}-c_0^{\prime}}  \{ (c_1^{\prime}-c_0^{\prime})s -d_Z(a,b) - d_Z(p_1,p_2)\} \notag \\
&= (c_1-c)t-  (c_1'-c)s + \dfrac{c_1'-c}{c_1'-c_0'} d_Z(a,b)+ \dfrac{c_1'-c}{c_1'-c_0'} d_Z(p_1,p_2). \label{Yy-1}
\end{align}
By inequality $|(1-c_1)t-(1-c_1')s| \leq 2D$, 
the sum of the first two terms of \eqref{Yy-1} can be estimated as follows: 
\begin{align*}
(c_1-c)t-  (c_1'-c)s 
&=\{(1-c)-(1-c_1)\}t-  \{(1-c)-(1-c_1')\}s \\
&=(1-c)(t-s) + \{(1-c_1')s-(1-c_1)t\}\\
&\leq (1-c) d_Z(\gamma_1(0), \gamma_2(0)) + 2D.
\end{align*}
By Definition~\ref{cbtd} \eqref{Ey-3-2}, the third term of \eqref{Yy-1} is less than or equal to $2D$. 
By Definition~\ref{cbtd} \eqref{Ey-3-3}, we estimate the fourth term of \eqref{Yy-1}
\begin{align*}
\dfrac{c_1'-c}{c_1^{\prime}-c_0^{\prime}} d_Z(p_1, p_2) 
&= \dfrac{(1-c)-(1-c_1')}{(1-c_0')-(1-c_1')} d_Z(p_1, p_2) \\
&\leq \dfrac{(1-c)(1-c_0')-(1-c_1')(1-c_0')}{(1-c_0')-(1-c_1')} d_Z(\gamma_1(0), \gamma_2(0)) +4D\\
&\leq (1-c) d_Z(\gamma_1(0), \gamma_2(0)) +4D. 
\end{align*}
Thus, we obtain
\begin{align*}
\left( \dfrac{c-c_0^{\prime}}{c_1^{\prime}-c_0^{\prime}} -\dfrac{c-c_0}{c_1-c_0} \right) (c_1-c_0)t \leq (1-c) \{2d_Z(\gamma_1(0), \gamma_2(0))\} +8D. 
\end{align*}
By a similar argument, we have
\begin{align*}
\left( \dfrac{c-c_0}{c_1-c_0} -\dfrac{c-c_0'}{c_1'-c_0'} \right) (c_1-c_0)t \leq (1-c) \{2d_Z(\gamma_1(0), \gamma_2(0))\}+8D. 
\end{align*}
Therefore, 
\begin{align}
d_Z(a^{\prime}, \gamma_1(c))
&=\left| \dfrac{c-c_0^{\prime}}{c_1^{\prime}-c_0^{\prime}} -\dfrac{c-c_0}{c_1-c_0}\right| \cdot (c_1-c_0)t \notag \\
&\leq (1-c) \cdot \{2d_Z(\gamma_1(0), \gamma_2(0))\} +8D. \label{C2y-2}
\end{align}
holds.
Combining \eqref{C2y-1} and \eqref{C2y-2} yields
\begin{align*}
d_Z(\gamma_1(c),\gamma_2(c)) 
&\leq d_Z(\gamma_1(c),a^{\prime}) + d_Z(a^{\prime}, \gamma_2(c)) \\ 
& \leq (1-c) \cdot \{2d_Z(\gamma_1(1), \gamma_2(1))\} +8D + (1-c) \cdot Ed_Z(\gamma_1(1), \gamma_2(1)) +6ED +C \\
& = (1-c) (E+2) d_Z(\gamma_1(1), \gamma_2(1)) +6ED +8D +C.
\end{align*}
 
\begin{figure}[t]
\begin{tikzpicture}
\draw(0,1)--++(2,-0.5)--++(0,-1)--++(-2,-0.5)--cycle;
\draw(5,0)--++(-3,0.5);
\draw(5,0)--++(-3,-0.5);
\draw(2,-0.5)node[below]{$b$};
\draw(-2,2.5)--++(2,-1.5);
\draw(-2,2.5)node[left]{$\gamma_1(0)$};
\fill(-2,2.5)circle(0.06);
\draw(0,1)node[left]{$p_1$};
\draw(0,-1)node[left]{$p_2$};
\draw(2,0.5)node[above]{$a$};
\draw(5,0)node[above]{$\gamma_1(1)$};
\fill(5,0)circle(0.06);
\draw(-2,-2.5)--++(2,1.5);
\draw(-2,-2.5)node[below]{$\gamma_2(0)$};
\fill(-2,-2.5)circle(0.06);
\draw(1,0.75)node[above]{$\gamma_1(c)$};
\fill(1,0.75)circle(0.06);
\draw(1,-1.25)node{$\gamma_2(c)$};
\fill(1,-0.75)circle(0.06);
\end{tikzpicture}
\caption{Proof of Proposition~\ref{cbt}-Case \ref{EyitemI}}
\label{FyI}
\end{figure}
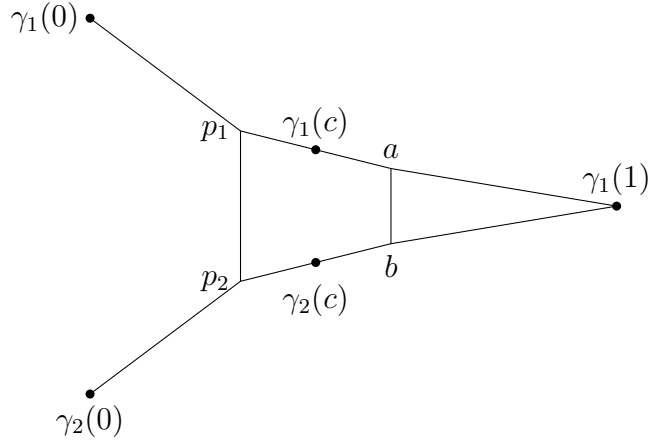
\par
In Case \ref{EyitemII}). Supposed that $c \in [0,c_0]$ and $c \in [c_0', c_1']$,
where $p_1=\gamma_1(c_0)$ and $p_2=\gamma_2(c^{\prime}_0)$.
Set $a^{\prime} = \gamma_1(c_0+(1-c_0)c))$ and $b'=\gamma'_2(c'_0+(1-c'_0)c)$. 
We apply the same argument as in Case \ref{EyitemI}) to $a'=\gamma_1(c_0+(1-c_0)c))$ and $b'=\gamma_2(c'_0+(1-c'_0)c)$, we have
\begin{align}
\label{Ay}
d_Z(a^{\prime}, b')
\leq (1-c)(E+2) d_Z(p_1,p_2) +2ED +4D +C .
\end{align}
Set $K_1=2ED +4D +C$. 
Note that 
\[ d_Z(\gamma_1(c), a^{\prime}) =(1-c)c_0t \quad \text{and}\quad  
 d_Z(b^{\prime},\gamma_2(c)) =(1-c)c'_0s  \]
The distance $d_Z(\gamma_1(c), \gamma_2(c))$ can be estimated as follows:
\begin{align}
&d_Z(\gamma_1(c), \gamma_2(c)) \notag \\
& \leq d_Z(\gamma_1(c), a^{\prime}) + d_Z(a',b')+ d_Z(b^{\prime},\gamma_2(c)) \notag \\
&= (1-c)c_0t+ (1-c)c'_0s+(1-c)(E+2) d_Z(p_1, p_2) + K_1 \label{Ay-1} \\
&= (1-c)\{d_Z(\gamma_1(0),p_1) +d_Z(\gamma_2(0),p_2)\}+(1-c)(E+2) d_Z(p_1, p_2) +K_1 \notag \\
&= (1-c)(E+2)\{d_Z(\gamma_1(0),p_1) +d_Z(p_1, p_2) +d_Z(\gamma_2(0),p_2)\}+K_1 \notag \\
&\leq (1-c)(E+2)d_Z(\gamma_1(0),\gamma_2(0)) +4D(E+2)+ K_1 \label{Ay-2},
\end{align}
where \eqref{Ay-1} follows from \eqref{Ay}, and
\eqref{Ay-2} follows Definition~\ref{cbtd} \eqref{Ey-3-5}.
\begin{figure}[t]
\begin{tikzpicture}
\draw(0,1)--++(2,-0.5)--++(0,-1)--++(-2,-0.5)--cycle;
\draw(5,0)--++(-3,0.5);
\draw(5,0)--++(-3,-0.5);
\draw(-1.25,1.75)node[below]{$\gamma_1(c)$};
\fill(-1,1.75)circle(0.06);
\draw(2,-0.5)node[below]{$b$};
\draw(-2,2.5)--++(2,-1.5);
\draw(-2,2.5)node[left]{$\gamma_1(0)$};
\fill(-2,2.5)circle(0.06);
\draw(0,1)node[left]{$p_1$};
\draw(0,-1)node[left]{$p_2$};
\draw(2,0.5)node[above]{$a$};
\draw(5,0)node[above]{$\gamma_1(1)$};
\fill(5,0)circle(0.06);
\draw(-2,-2.5)--++(2,1.5);
\draw(-2,-2.5)node[below]{$\gamma_2(0)$};
\fill(-2,-2.5)circle(0.06);
\draw(1,0.75)node[above]{$a'$};
\fill(1,0.75)circle(0.06);
\draw(1,-1.25)node{$\gamma_2(c)$};
\fill(1,-0.75)circle(0.06);
\fill(1.5,-0.63)circle(0.06);
\draw(1.5,-0.63)node[below]{$b'$};
\end{tikzpicture}
\caption{Proof of Proposition~\ref{cbt}-Case \ref{EyitemII}) 
}
\label{FyII}
\end{figure}
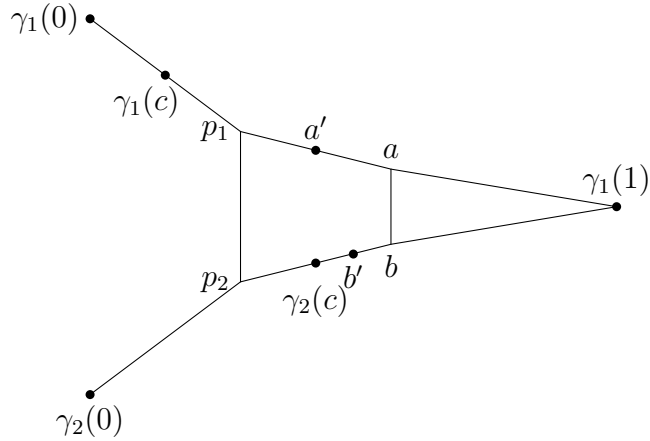

In Case \ref{EyitemIII}). Suppose  that $c \in [0,c_0]$ and $c \in [0, c_0']$, where $p_1=\gamma_1(c_0)$ and $p_2=\gamma_2(c^{\prime}_0)$.
In the following sequences of inequalities, 
the first one follows from Item \eqref{Ey-3-3} of Definition~\ref{cbtd}, 
the second one from $0< 1-c\leq 1$, 
the third one from $1-c^{\prime}_0 \leq 1-c$, 
the last one from Item \eqref{Ey-3-5} of Definition~\ref{cbtd}.
\begin{align*}
&d_Z(\gamma_1(c),\gamma_2(c))  \\
&=d_Z(\gamma_1(c),\gamma_1(c_0)) + d_Z(p_1,p_2)+ d_Z(\gamma_2(c^{\prime}_0),\gamma_2(c)) \\
&\leq (c_0-c)t + (1-c_0')d_Z(\gamma_1(0),\gamma_2(0)) +4D + (c_0'-c)s \\
&=\{ (1-c)-(1-c_0) \}t + \{(1-c)-(1-c_0')\}s + (1-c_0')d_Z(\gamma_1(0),\gamma_2(0)) +4D\\
&=(1-c)\left(1-\dfrac{1-c_0}{1-c} \right)t + (1-c)\left( 1-\dfrac{1-c^{\prime}_0}{1-c} \right)s + (1-c_0')d_Z(\gamma_1(0),\gamma_2(0)) +4D\\
&\leq (1-c)\{1-(1-c_0) \}t + (1-c)\{ 1-(1-c^{\prime}_0) \}s + (1-c_0')d_Z(\gamma_1(0),\gamma_2(0)) +4D\\
&=(1-c)c_0t + (1-c)c^{\prime}_0s + (1-c_0')d_Z(\gamma_1(0),\gamma_2(0)) +4D\\
&\leq (1-c)d_Z(\gamma_1(0),p_1) + (1-c)d_Z(\gamma_2(0),p_2) + (1-c)d_Z(\gamma_1(0),\gamma_2(0)) +4D\\
&\leq (1-c) \{2 d_Z(\gamma_1(0),\gamma_2(0)) \}+8D.
\end{align*}
Therefore, 
\begin{align*}
d_Z(\gamma_1(c), \gamma_2(c)) \leq (1-c)(2+E) d_Z(\gamma_1(0),\gamma_2(0)) + C
\end{align*}
holds for all $c \in [0,1]$.
This shows that $\Gamma$ is $(2+E,C)$-coarsely backward convex. 
\end{proof}

\section{Construction of equivariant trees of spaces}

\label{sec:const-eqiv-tree}
A \emph{tree of  spaces} is a continuous map 
$\xi\colon Z\to T$ where $Z$ is a geodesic metric space, $T$ is a bipartite tree with $V(T)=K\sqcup L$, such that  
 \begin{enumerate}
  \item for each $l\in L$, the inverse $\xi^{-1}(l)$ consists of a
        single point, and
  \item for each $k\in K$, the inverse $X_k:=\xi^{-1}(\starNb(k))$
        is a convex subspace of $Z$. 
 \end{enumerate}
 The space $X_k$ is called \emph{the vertex space at $k$} for $k\in K$. 
 
A \emph{$G$-tree of spaces} is a tree of spaces $\xi\colon Z \to T$ that
is also $G$-map for the group $G$, the $G$-action on $Z$ is by
isometries, and the $G$-action on $T$ is cellular and preserves the
blocks of the bipartition $V(T)=K\sqcup L$.

In this section we construct a $G$-tree-spaces $\xi\colon Z \to T$ from a group splitting of $G$ and spaces corresponding to vertex groups of the splitting. There are two analogous results, one for amalgamated products and another for HNN-extensions. The vertex-spaces of the tree of spaces $\xi$  are coned-off spaces that are defined below. Most of the section is an adaptation of methods in~\cite{BiMa23}. 
 
\begin{theorem}\label{thm:CombinationFine}
For $i\in\{
 1,2\}$, let $(G_i, \mathcal{H}_i\cup\{K_i\})$ be a pair and   $\partial_i\colon C\to K_i$  a group monomorphism. Let $G=G_1\ast_C G_2$  denote the amalgamated product determined by
$G_1\xleftarrow{\partial_1}C\xrightarrow{\partial_2}G_2$, and  $\mathcal{H}=\mathcal{H}_1\cup\mathcal{H}_2$.
Let $X_i$ be a $G_i$-space  that has  a point $x_i$ with $G_i$-stabilizer $K_i$. Then there is a $G$-space $Z$ with the following properties:
\begin{enumerate}[series=amalgamation]
    \item \label{item:Amalgamation-basepoint}
          $Z$ has a point $z$ such that the $G$-stabilizer  $G_z=\langle K_1,K_2\rangle$. 
    
    \item \label{item:Amalgamation-topological-embedding-CF}
          There is a $G_i$-equivariant  topological embedding
    of coned-off spaces 
    $\spike(X_i, C, x_i)\hookrightarrow Z$ that maps $\hat x_i$ to $z$.

    \item \label{item:Amalgamation-cocompact}
       If the $G_i$-action on $X_i$ is  cocompact  for $i=1,2$, 
       then the $G$-action on  $Z$ is cocompact.
        
     \item \label{item:Amalgamation-connected}
           If $X_i$ is connected  for $i=1,2$, then $Z$ is connected.

     \item \label{item:Amalgamation-H-is-stabilizer}
           If  $H\in\mathcal{H}_i\cup\{K_i\}$ is the $G_i$-stabilizer of a point of $X_i$ for $i=1,2$, then every $H\in \mathcal{H}\cup\{\langle K_1,K_2 \rangle\}$ is the $G$-stabilizer of a point of $Z$.
   
     \item \label{item:Amalgamation-Stabilizers-are-conjugate}
           If point $G_i$-stabilizers in $X_i$ are finite or conjugates of subgroups in $\mathcal{H}_i\cup\{ K_i\}$ for $i=1,2$, then point $G$-stabilizers in $Z$ are finite or conjugates of subgroups in $\mathcal H\cup\{ \langle K_1,K_2\rangle\}$.

     \item \label{item:Amalgamation-tree}
           There is a  $G$-tree $T$ and a $G$-map $\xi\colon  Z \to T$ of  with the following properties:
  \begin{enumerate}
      \item $\xi (z)$ is a vertex of $T$ and the preimage $\xi^{-1}(\xi (z))$ consists of only the point $z$.
      
      \item The image $\xi (X_i)$ is a single vertex $v_i$ of $T$. 
      
      \item The preimage $\xi^{-1}(\mathsf{Star}(v_i))$ is homeomorphic to $\spike(X_i, C, x_i)$.
     
      \item The $G$-orbit of $\xi(z)$ and its complement in the set of vertices of $T$ make $T$ a bipartite graph.
  \end{enumerate}
\end{enumerate}
Moreover, 
\begin{enumerate}[resume=amalgamation]
 \item \label{item:Amalgamation-geometric-embedding-CF}
       If $X_i$ is a geodesic metric space and the $G_i$-action on $X_i$
       is by isometries, then there is $\ell>0$ such that $Z$ is a
       geodesic metric space, the $G$-action is by isometries, and the
       embeddings $\spike(X_i, C,x_i, \ell)\hookrightarrow Z$
       are isometric and convex.

 \item \label{item:Amalgamation-relative-p.d} 
       If $X_i$ is a geodesic metric space 
       and the $G_i$-action on $X_i$ is properly
       discontinuous relative to $\mathcal{H}_i\cup\{K_i\}$ for $i=1,2$,
       then the $G$-action on $Z$ is properly discontinuous relative to
       $\mathcal{H}\cup \{\langle K_1,K_2 \rangle\}$.

 \item \label{item:Amalgamation-Cayley-Abels-graph}
       If $X_i$ is a Cayley-Abels graph for $(G_i,\mathcal{H}_i\cup
       K_i)$, then one can assume that $\xi^{-1}(v_i)$ is isomorphic to
       $X_i$ and that $Z$ is a Cayley-Abels graph of 
       $(G,\mathcal{H}\cup\{\langle K_1,K_2\rangle\})$.


\end{enumerate}

\end{theorem}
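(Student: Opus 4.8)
The plan is to realise $Z$ as a $G$-tree of spaces $\xi\colon Z\to T$ built from the splitting $G=G_1\ast_C G_2$, in the spirit of the graph-of-spaces constructions of~\cite{BiMa23}, and then read the listed properties off the construction. First I would take the Bass--Serre tree $S$ of the splitting, with $V(S)=G/G_1\sqcup G/G_2$ and $E(S)=G/C$, and modify it equivariantly so that $\langle K_1,K_2\rangle$ becomes a vertex stabiliser: the subgroup $\langle K_1,K_2\rangle$ acts on $S$ with a minimal invariant subtree, and coning off its $G$-orbit produces a bipartite $G$-tree $T$ with a vertex $\bar z$ of stabiliser $\langle K_1,K_2\rangle$ in one part and the orbits of two vertices $v_1,v_2$ of stabilisers $G_1,G_2$ in the other. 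Over $T$ I then assemble $Z$: the vertex space over each translate of $v_i$ is the corresponding translate of $X_i$, the vertex space over each translate of $\bar z$ is a single point, and the intervals of $T$ are the spikes of $\spike(X_i,C,x_i)$, arranged so that $\hat x_i$ maps to the point $z$ over $\bar z$ and $\xi^{-1}(\mathsf{Star}(v_i))\cong\spike(X_i,C,x_i)$. With this in hand items~\eqref{item:Amalgamation-basepoint}, \eqref{item:Amalgamation-topological-embedding-CF} and~\eqref{item:Amalgamation-tree} are built into the construction, and~\eqref{item:Amalgamation-H-is-stabilizer} holds because each $X_i\hookrightarrow Z$ is a $G_i$-equivariant topological embedding and $z$ has stabiliser $\langle K_1,K_2\rangle$.

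Next I would dispatch the remaining topological and metric items. Cocompactness~\eqref{item:Amalgamation-cocompact} and connectedness~\eqref{item:Amalgamation-connected} hold because $T/G$ is finite and $Z$ is built from finitely many $G$-orbits of cells on the $X_i$, any two neighbouring vertex spaces meeting in a spike point. For~\eqref{item:Amalgamation-Stabilizers-are-conjugate}, a point of $Z$ either lies in some vertex space $gX_i$, where its $G$-stabiliser equals its $G_i$-stabiliser and hence is finite or a conjugate of a subgroup of $\mathcal H_i\cup\{K_i\}$, or it lies over an edge or apex of $T$, where its stabiliser is finite, a conjugate of some $K_i$, or a conjugate of $\langle K_1,K_2\rangle$; this is Bass--Serre bookkeeping. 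For~\eqref{item:Amalgamation-geometric-embedding-CF} I would fix a spike length $\ell>0$ and give $Z$ the path metric that isometrically embeds each $gX_i$ and makes each edge-interval have length $\ell$, so that $G$ acts by isometries; the only real point is that $\spike(X_i,C,x_i,\ell)\hookrightarrow Z$ is \emph{convex}, because any $Z$-geodesic leaving a vertex space must pass through a spike point — its projection to $T$ cannot be shortcut across an edge-interval. This is routine but bookkeeping-heavy.

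The main obstacle is item~\eqref{item:Amalgamation-relative-p.d}: upgrading relative proper discontinuity of the actions $G_i\curvearrowright X_i$ to relative proper discontinuity of $G\curvearrowright Z$. By Lemma~\ref{lem:criteria-p.d.action} it suffices, with basepoint $x_*=z$, to bound for each $r>0$ the $\hat\Gamma(G,\mathcal H\cup\{\langle K_1,K_2\rangle\})$-diameter of $\{g\in G: d_Z(z,g.z)\le r\}$. Given such a $g$, a geodesic in $Z$ from $z$ to $g.z$ projects in $T$ to a segment crossing at most $O(r/\ell)$ vertex spaces, so $g$ factors as a product of boundedly many ``moves'': each move either stays inside one vertex space $g'X_i$, where its word length in $\hat\Gamma(G_i,\mathcal H_i\cup\{K_i\})$ is controlled by Lemma~\ref{lem:rel-p.d.-coarse-embedding} applied to $X_i$, or crosses a single spike or tree edge and contributes a bounded amount. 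Since $\bigcup(\mathcal H_i\cup\{K_i\})\subseteq\bigcup(\mathcal H\cup\{\langle K_1,K_2\rangle\})$, these $G_i$-relative lengths convert into $G$-relative lengths, and summing over the $O(r/\ell)$ moves yields a bound on $\dist_{\hat\Gamma}(1,g)$ depending only on $r$. This is the mechanism of~\cite{BiMa23}, and it is where most of the work lies; a secondary difficulty hidden in the construction is verifying that the modified $T$ really is a tree and that $\langle K_1,K_2\rangle$ is genuinely the full stabiliser of $\bar z$.

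Finally, for item~\eqref{item:Amalgamation-Cayley-Abels-graph}: when the $X_i$ are Cayley--Abels graphs of $(G_i,\mathcal H_i\cup\{K_i\})$, the graph $Z$ is connected, cocompact, has finite edge stabilisers (edges either lie in an $X_i$, where this is a standing hypothesis, or are spike/tree intervals with finite stabilisers), and has vertex stabilisers finite or conjugate into $\mathcal H\cup\{\langle K_1,K_2\rangle\}$ by~\eqref{item:Amalgamation-Stabilizers-are-conjugate}; by~\eqref{item:Amalgamation-H-is-stabilizer} and~\eqref{item:Amalgamation-basepoint} every member of $\mathcal H\cup\{\langle K_1,K_2\rangle\}$ occurs as a vertex stabiliser, and the uniqueness axiom~(4) for infinite peripheral stabilisers is arranged, if necessary, by collapsing the relevant $G$-orbits of vertices — exactly as in the absolute Bass--Serre picture. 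Hence $\xi^{-1}(v_i)$ can be taken isomorphic to $X_i$ and $Z$ to be a Cayley--Abels graph of $(G,\mathcal H\cup\{\langle K_1,K_2\rangle\})$, completing the plan.
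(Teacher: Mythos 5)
Your overall strategy — realizing $Z$ as a $G$-tree of spaces over a Bass--Serre-like tree $T$ and then reading properties off that structure — is exactly the approach of the paper, but your construction of the tree $T$ has a genuine gap. You propose to take the Bass--Serre tree $S$ of $G_1\ast_C G_2$, find the minimal $\langle K_1,K_2\rangle$-invariant subtree $Y\subset S$, and then ``cone off its $G$-orbit.'' The intended operation is not specified carefully enough to see it produces a bipartite tree with the claimed stabilizers. For instance, if $K_1=G_1$ and $K_2=C=1$, then $Y$ is a single vertex and coning off the $G$-orbit of $Y$ (in the usual Farb sense of adding a cone vertex while retaining the old edges) yields $S$ with pendant spikes, which is a tree but is \emph{not} bipartite with the bipartition (cone vertices) versus (original vertices), contrary to item~\eqref{item:Amalgamation-tree}(d). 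If instead one discards the edges of each translate of $Y$, the result is disconnected in this case (the $G_2$-vertices become isolated), since the $G$-translates of $Y$ need not cover the edges of $S$. You flag this as ``a secondary difficulty,'' but it is in fact a primary one: the paper avoids it by directly taking $T$ to be the Bass--Serre tree of the refined splitting $G = G_1\ast_{K_1}\langle K_1,K_2\rangle\ast_{K_2}G_2$, whose vertex set is $G/G_1\sqcup G/\langle K_1,K_2\rangle\sqcup G/G_2$, so that the stabilizer of the middle vertex is $\langle K_1,K_2\rangle$ by construction and bipartiteness is automatic.

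Beyond the tree, the paper does not assemble $Z$ by fibering over a pre-built $T$; it constructs $Z$ as the \emph{$C$-pushout} of the induced $G$-spaces $G\times_{G_1}\spike(X_1,C,x_1)$ and $G\times_{G_2}\spike(X_2,C,x_2)$ with respect to the points $(\hat x_1,\hat x_2)$, and then Propositions~\ref{lem:pushout} and~\ref{prop:xi-map} deliver $T$, the map $\xi$, and items~\eqref{item:Amalgamation-basepoint}--\eqref{item:Amalgamation-geometric-embedding-CF}, \eqref{item:Amalgamation-Cayley-Abels-graph} simultaneously. This matters because several of the items you describe as ``built into the construction'' — in particular that $G_z=\langle K_1,K_2\rangle$ exactly, that $\spike(X_i,C,x_i)\hookrightarrow Z$ is a topological embedding, and that the metric makes those images convex — require the structural lemmas on pushouts of $G$-sets/$G$-spaces (Propositions~\ref{lem:pushout-set}, \ref{lem:fromHsetstoGset}, \ref{lem:fromKsetToGsetTopology}) to verify, and are not formal consequences of an ad hoc tree-of-spaces assembly. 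Your sketch of item~\eqref{item:Amalgamation-relative-p.d} does track the paper's argument: use Lemma~\ref{lem:criteria-p.d.action} with basepoint $z$, decompose the geodesic from $z$ to $g.z$ according to its projection to $T$, control each piece through Lemma~\ref{lem:rel-p.d.-coarse-embedding}, and sum. So the final step is sound; the issue is that the input tree and tree-of-spaces structure need to be built as in Propositions~\ref{lem:pushout} and~\ref{prop:xi-map} rather than by the coning-off shortcut you propose.
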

Let us highlight that the last statement of the above theorem on 
Cayley-Abels graphs is~\cite[Theorem 3.1]{BiMa23}, our contribution here
is to extend their methods from graphs to arbitrary spaces. Theorem~\ref{thm:CombinationFine} has the following analog for
HNN-extensions.  


\begin{theorem}\label{thm:HNNFine}
Let $(G, \mathcal{H} \cup\{K,L\})$ be a pair with $K\neq L$, $C\leq K$, and  $\varphi\colon C\to L$ a group monomorphism. Let $G\ast_\varphi$ denote the HNN-extension
$\langle G, t\mid  t c t^{-1} =\varphi(c)~\text{for all $c\in C$} \rangle$.
Let $X$ be a $G$-space that has points $x$ and $y$ whose $G$-stabilizers are $K$ and $L$ respectively, and their $G$-orbits are disjoint. 
Then there is a $G\ast_\varphi$-space $Z$ with the following properties:
\begin{enumerate}[series=HNN]
    \item $Z$ has a point $z$ such that the $G$-stabilizer  $G_z=\langle K^t,L\rangle$. 
    
    \item There is a $G$-equivariant  topological embedding $\spike(X, K, L, x, y)\hookrightarrow Z$ that maps $\hat x$ to $t^{-1}z$ and $\hat y \to z$.
     \item If the $G$-action on $X$ is  cocompact, then the action of $G\ast_\varphi$  on  $Z$ is cocompact.

     \item If $X$ is connected  then $Z$ is connected.

        \item If every   $H\in\mathcal{H} \cup\{K, L\}$ is the $G$-stabilizer of a point of $X$, then every $H\in \mathcal{H}\cup\{\langle K^t,L \rangle\}$ is the $G\ast_\varphi$-stabilizer of   a point of $Z$.
    
        \item If point $G$-stabilizers in $X$ are finite or conjugates of subgroups 
              in $\mathcal{H}\cup\{ K, L\}$, 
              then point $G$-stabilizers in $Z$ are finite or conjugates of subgroups 
              in $\mathcal{H}\cup\{\langle K^t,L \rangle\}$.

        \item There is a  $G\ast_\varphi$-tree $T$ and a $G\ast_\varphi$-map $\xi\colon  Z \to T$  with the following properties:
  \begin{enumerate}
      \item $\xi (z)$ is a vertex of $T$ and the preimage $\xi^{-1}(\xi (z))$ 
            consists of only the point $z$.
      
      \item The image $\xi (X )$ is a single point $v$ of $T$. 
      
      \item The preimage $\xi^{-1}(\mathsf{Star}(v))$ is homeomorphic 
            to $\spike(X, K, L, x, y)$.
     
      \item The $G$-orbit of $\xi(z)$ and its complement in the set of vertices of 
            $T$ make $T$ a bipartite graph.
  \end{enumerate}
\end{enumerate}
Moreover,
\begin{enumerate}[resume=HNN]
  \item 
        If $X$ is a geodesic metric space and the $G$-action on $X$ is by isometries, 
        then $Z$ is a geodesic metric space, the $G\ast_\varphi$-action is by isometries,
        and the embedding $X\hookrightarrow Z$ is isometric and convex. 

 \item \label{item:HNN-relative-p.d}
       If $X$ is a geodesic metric space and the $G$-action on $X$ 
       is properly discontinuous relative to $\mathcal{H}\cup\{ K, L\}$, 
       then $G\ast_\varphi$ action of $Z$ is
       properly discontinuous relative to $\mathcal{H}\cup\{\langle K^t,L \rangle\}$.       

    \item If $X$ is a Cayley-Abels graph for $(G, \mathcal{H} \cup\{K,L\})$, then one can assume that $\xi^{-1}(\mathsf{Star}(v))$ is isomorphic to $X$ and that $Z$ is Cayley-Abels graph of $(G, \mathcal{H}\cup\{\langle K_1,K_2\rangle\})$.
\end{enumerate}
\end{theorem}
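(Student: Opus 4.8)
The plan is to mimic the construction proving Theorem~\ref{thm:CombinationFine}, with the single vertex group $G$ together with the stable letter $t$ playing the role of the two vertex groups $G_1,G_2$, and with the block $\spike(X,K,L,x,y)$ playing the role of the two blocks $\spike(X_i,C,x_i)$. The algebraic input is as follows: by Britton's lemma, in $G\ast_\varphi$ one has $tCt^{-1}=\varphi(C)\leq L$, hence $C^{t}\leq K^{t}\cap L$; moreover $G\cap\langle K^{t},L\rangle=L$ and $G\cap t^{-1}\langle K^{t},L\rangle t=K$, and consequently $G\ast_\varphi$ is the fundamental group of the graph of groups $\mathcal G$ whose underlying graph is a bigon with two vertices $u_0,u_1$ and two edges $e,f$ joining them, with $\mathcal G(u_0)=G$, $\mathcal G(u_1)=\langle K^{t},L\rangle$, $\mathcal G(e)=L$ included into both vertex groups by inclusion, and $\mathcal G(f)=K$ included into $G$ by inclusion and into $\langle K^{t},L\rangle$ by $k\mapsto k^{t}$. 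I would take $T$ to be the Bass--Serre tree of $\mathcal G$; thus $T$ is a bipartite $G\ast_\varphi$-tree with vertex set $(G\ast_\varphi/G)\sqcup(G\ast_\varphi/\langle K^{t},L\rangle)$, the action preserves the bipartition, and the stabilizer of the distinguished vertex $\xi(z)$ in the $\langle K^{t},L\rangle$-class is exactly $\langle K^{t},L\rangle$. The hypothesis that the $G$-orbits of $x$ and $y$ are disjoint, together with $K\neq L$, is what makes the two edge families of $\mathcal G$, and hence the two families of spike edges below, genuinely distinct.

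Next I would build $Z$ over $T$ as the tree of spaces assigning a copy of $X$ to each vertex of type $gG$ and a single point $hz$ to each vertex of type $h\langle K^{t},L\rangle$, realising each edge of $T$ by a spike edge of length $\ell$ (to be fixed later) joining $gx$ or $gy$ in the copy of $X$ at $gG$ to the adjacent cone point; equivalently $Z$ is the quotient of $\bigsqcup_{g\in G\ast_\varphi} g\cdot\spike(X,K,L,x,y)$ obtained by gluing the copies of $X$ and the spike points identified under the edge maps of $\mathcal G$, and $\xi\colon Z\to T$ is the natural $G\ast_\varphi$-map collapsing each vertex space and each spike edge. With $Z$ so defined, items (1)--(7) follow by the same bookkeeping as in Theorem~\ref{thm:CombinationFine}: the identifications $\hat x\mapsto t^{-1}z$, $\hat y\mapsto z$ and the equality $G_z=\langle K^{t},L\rangle$ come from the description of $T$; $Z$ is connected because each copy of $X$ and $T$ are connected; the action is cocompact because $T/G\ast_\varphi$ is finite, $X/G$ is compact, and there are finitely many orbits of spike edges; and the point-stabilizer statements hold because each point of $Z$ lies in a translate of $X$ (isotropy a conjugate of a point-stabilizer in $X$), is a cone point (isotropy a conjugate of $\langle K^{t},L\rangle$), or lies in the interior of a spike edge (isotropy a conjugate of $K$ or of $L$, hence subconjugate to $\langle K^{t},L\rangle$).

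For the ``moreover'' part I would give all spike edges a common length $\ell>0$ and equip $Z$ with the induced length metric; since $Z$ is a tree of geodesic spaces glued along single points, $Z$ is geodesic, $G\ast_\varphi$ acts by isometries, and the embedding $\spike(X,K,L,x,y,\ell)\hookrightarrow Z$ is isometric and convex, because a geodesic of $Z$ joining two points of one block cannot leave that block. For item~\eqref{item:HNN-relative-p.d} I would apply Lemma~\ref{lem:criteria-p.d.action} with basepoint $x_{*}=z$: given $g\in G\ast_\varphi$ with $d_Z(z,g.z)\leq r$, a geodesic from $z$ to $g.z$ meets a number of vertex spaces bounded in terms of $r$ and $\ell$; applying Lemma~\ref{lem:rel-p.d.-coarse-embedding} to the $G$-action on $X$ relative to $\mathcal H\cup\{K,L\}$ on each such vertex space, and using that crossing from one vertex space to an adjacent one alters the carrying group element only by an element of $\langle K^{t},L\rangle$ (or keeps it in $G$), one concludes that $g$ has word length bounded in $(G\ast_\varphi,\mathcal H\cup\{\langle K^{t},L\rangle\})$, as required; this is the scheme of the proof of Theorem~\ref{thm:CombinationFine}\eqref{item:Amalgamation-relative-p.d}. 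Finally, if $X$ is a Cayley--Abels graph of $(G,\mathcal H\cup\{K,L\})$, taking $\ell=1$ makes $Z$ a connected cocompact simplicial $G\ast_\varphi$-graph, and the four Cayley--Abels axioms for $(G\ast_\varphi,\mathcal H\cup\{\langle K^{t},L\rangle\})$ follow from the stabilizer computations above together with the orbit condition, which holds since all vertices with stabilizer $\langle K^{t},L\rangle$ are in one $G\ast_\varphi$-orbit by construction.

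The genuinely new point relative to the amalgam case is the identification of the distinguished vertex stabilizer: unlike $\langle K_1,K_2\rangle$, which is visibly the amalgam at the cone point, here one must use Britton's lemma (the normal form theorem for HNN-extensions) to verify that $G_z$ is \emph{exactly} $\langle K^{t},L\rangle$ and that the bigon graph of groups $\mathcal G$ really has fundamental group $G\ast_\varphi$. I expect this, together with item~\eqref{item:HNN-relative-p.d} --- controlling how a bounded geodesic of $Z$ traverses the tree of spaces and converting it into a bound in the relative word metric with respect to the new peripheral structure --- to be where the work lies.
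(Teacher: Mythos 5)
Your proposal follows the paper's tree-of-spaces strategy but makes explicit a structural choice that the paper's one-line proof leaves unstated, and this added content is genuinely needed. The paper says to run the argument for Theorem~\ref{thm:CombinationFine} with Proposition~\ref{lem:HNN} in place of Proposition~\ref{lem:pushout}; however, Proposition~\ref{lem:HNN} and the unnumbered proposition on the $\varphi$-coalescence are both stated under the hypothesis $A_x=H$ and $A_y=\varphi(H)$ (that is, $G_x=C$ and $G_y=\varphi(C)$ in the notation of Theorem~\ref{thm:HNNFine}), which is strictly stronger than the theorem's hypothesis $G_x=K\supseteq C$, $G_y=L\supseteq\varphi(C)$; applied verbatim they would hand the distinguished cone point the stabilizer $\varphi(C)$ and a Bass--Serre midpoint vertex with stabilizer conjugate to $C$, not $\langle K^{t},L\rangle$ as items~(1) and~(7) require. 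Your bigon graph of groups $\mathcal{G}$ with vertex groups $G$ and $\langle K^{t},L\rangle$ and edge groups $L$, $K$ (realized as $G\cap\langle K^{t},L\rangle$ and $G\cap t^{-1}\langle K^{t},L\rangle t$, which by Britton's lemma are exactly $L$ and $K$) supplies the $G\ast_\varphi$-tree $T$ whose midpoint stabilizers are conjugates of $\langle K^{t},L\rangle$ and whose two orbits of edges correspond precisely to the two families of spike edges of $\spike(X,K,L,x,y)$; this is the correct HNN analogue of the chain splitting $G_1\ast_{K_1}(K_1\ast_C K_2)\ast_{K_2}G_2$ that Proposition~\ref{prop:xi-map} uses in the amalgam case, and it is what makes items~(1), (2), and~(7) hold as stated. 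The remaining items—the length-metric and convexity argument, relative properness via Lemma~\ref{lem:criteria-p.d.action}, Lemma~\ref{lem:rel-p.d.-coarse-embedding}, and a geodesic crossing count, and the Cayley--Abels verification at $\ell=1$—you handle exactly as the paper does for Theorem~\ref{thm:CombinationFine}~\eqref{item:Amalgamation-relative-p.d}. One small point worth flagging is that identifying $G\ast_\varphi$ with $\pi_1(\mathcal{G})$ requires slightly more than the two intersection computations you state: one must also check that the natural surjection $\pi_1(\mathcal{G})\to G\ast_\varphi$ is injective (equivalently that the associated graph is a tree), but this follows from the HNN normal form and is in the same spirit as what you already invoke.
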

 
The last statement of the above theorem is~\cite[Theorem 3.2]{BiMa23}.
This section describes arguments proving the above
Theorems~\ref{thm:CombinationFine} and~\ref{thm:HNNFine}. As previously
stated, we extend methods for graphs in~\cite{BiMa23} to arbitrary
spaces. The extension requires the use of coned-off spaces in order to
adjust some of the arguments.

\subsection{Pushouts in the category of $G$-sets and action extensions.} Let $\phi\colon R\to S$ and $\psi\colon R \to T$ be $G$-maps. The pushout of $\phi$ and $\psi$ is defined as follows. Let $Z$ be the $G$-set  obtained as the quotient of 
the disjoint union of $G$-sets  $S\sqcup T$ by the equivalence  relation generated by all pairs   $s\sim t$ with $s\in S$ and $t\in T$ satisfying that  there is $r\in R$ such that $\phi(r)=s$ and $\psi(r)=t$.
There are canonical $G$-maps $\imath\colon S \to Z$ and $\jmath\colon T \to Z$ such that $\imath\circ \phi= \jmath \circ \psi$. This construction satisfies the universal property of pushouts in the category of $G$-sets.

\begin{proposition}\label{lem:pushout-set}\cite[Proposition 4.1]{BiMa23}
 Let $\phi\colon R\to S$ and $\psi\colon R \to T$ be $G$-maps. Consider the pushout 
  \[ 
  \begin{tikzcd} 
                    &  S \arrow{rd}{\imath} &  \\
R \arrow{ru}{\phi} \arrow{rd}{\psi} &     & Z \\                    &  T\arrow{ru}{\jmath} &
\end{tikzcd} \]
of $\phi$ and $\psi$. 
Suppose there is $r\in R$ such that $R=G.r$. If $s=\phi(r)$, $t=\psi(r)$ and $z=\imath(s)$ then the $G$-stabilizer $G_z$ equals the subgroup $\langle G_s,G_t \rangle$. 
\end{proposition}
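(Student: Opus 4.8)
The plan is to make the equivalence relation defining the pushout $Z$ completely explicit and then read off both the fiber over $z$ and its stabilizer. First I would use the hypothesis $R=G.r$: every element of $R$ has the form $g.r$, so $\phi(g.r)=g.\phi(r)=g.s$ and $\psi(g.r)=g.\psi(r)=g.t$. Hence the generating pairs of the equivalence relation $\sim$ on $S\sqcup T$ are \emph{exactly} the pairs $(g.s,g.t)$ with $g\in G$. Writing $\pi\colon S\sqcup T\to Z$ for the quotient map, the fiber $\pi^{-1}(z)$ is precisely the $\sim$-class of (the $S$-copy of) $s$, which also equals the $\sim$-class of $t$.

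Next I would prove that $\pi^{-1}(z)=\{q.s : q\in Q\}\cup\{q.t : q\in Q\}$, where $Q=\langle G_s,G_t\rangle$. For the inclusion $\supseteq$ I argue by induction on the length of a word in $G_s\cup G_t$ representing an element $q\in Q$ (note that inverses of elements of $G_s$, resp. $G_t$, again lie in $G_s$, resp. $G_t$): the base case $q=e$ is the generating relation $s\sim t$; in the inductive step, if both $q.s$ and $q.t$ lie in the class and $g\in G_s$ (resp. $g\in G_t$), then $qg.s=q.s$ (resp. $qg.t=q.t$) already lies in the class, and the generating relation $qg.s\sim qg.t$ then forces the remaining point into the class as well. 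For the inclusion $\subseteq$ I would check that $W:=\{q.s : q\in Q\}\cup\{q.t : q\in Q\}$ is saturated under $\sim$: since $S$ and $T$ are disjoint inside $S\sqcup T$, a generating pair $(g.s,g.t)$ with $g.s\in W$ forces $g.s=q.s$ for some $q\in Q$, hence $q^{-1}g\in G_s\subseteq Q$, so $g\in Q$ and $g.t\in W$; the case $g.t\in W$ is symmetric. Since $s\in W$ and $W$ is $\sim$-saturated, the class of $s$ is contained in $W$, giving equality.

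Finally I would compute $G_z$. Because $\pi$ is $G$-equivariant, $g.z=z$ if and only if the $S$-copy of $g.s$ lies in $\pi^{-1}(z)\cap S=\{q.s : q\in Q\}$, that is, $g.s=q.s$ for some $q\in Q$, that is, $g\in qG_s\subseteq Q$; conversely, every $g\in Q$ satisfies $g.s\in\pi^{-1}(z)$ and hence $g.z=z$. Therefore $G_z=Q=\langle G_s,G_t\rangle$, as claimed.

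The step I expect to require the most care is the identification of the fiber $\pi^{-1}(z)$ with $\{q.s\}\cup\{q.t\}$, specifically the saturation argument: one must use that $S$ and $T$ remain disjoint inside $S\sqcup T$ so that membership of $g.s$ in $W$ pins down $g$ modulo $G_s$. The inductive inclusion and the final stabilizer computation are then routine manipulations of orbits and point stabilizers.
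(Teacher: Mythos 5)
Your proof is correct. The paper itself only cites \cite[Proposition 4.1]{BiMa23} and does not reproduce an argument, but your direct verification is sound: once the generating pairs of the pushout relation are identified as $\{(g.s,\, g.t) : g\in G\}$ (using $R=G.r$ and $G$-equivariance of $\phi,\psi$), the two-sided identification of the $\sim$-class of $s$ with $\{q.s : q\in Q\}\cup\{q.t : q\in Q\}$ for $Q=\langle G_s,G_t\rangle$ follows exactly as you say — the word-length induction for $\supseteq$ and the saturation check for $\subseteq$, with the key observation that $S$ and $T$ stay disjoint inside $S\sqcup T$ so that $g.s\in W$ pins $g$ to a coset of $G_s$ inside $Q$. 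The final step $G_z=Q$ is then immediate from $G$-equivariance of the quotient map.

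As a side remark, there is an alternative that avoids describing the whole fiber: $G_s$ and $G_t$ both fix $z$ (the former fixes $s$, and the latter fixes $t$ with $\jmath(t)=\imath(s)=z$), giving $Q\subseteq G_z$; for the reverse inclusion one can define a $G$-map from the orbit of $z$ to $G/Q$ by $[g.s]\mapsto gQ$ and $[g.t]\mapsto gQ$ — this is well defined on $S\sqcup T$ precisely because $G_s,G_t\subseteq Q$, and it is constant on generating pairs, hence factors through the pushout — and equivariance forces $G_z\subseteq G_{eQ}=Q$. Both arguments buy the same conclusion; yours makes the fiber of $\pi$ fully explicit, which is often useful downstream, while the coset-map argument is shorter when only the stabilizer is needed.
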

 
In the case that $K$ is a subgroup of $G$ and $S$ is a $K$-set, there is a natural extension of the $K$-set to a $G$-set $G\times_K S$. Specifically, up to isomorphism of $K$-sets, the $K$-set $S$ is a disjoint union of $K$-sets\[ S  =  \bigsqcup_{i\in I} K/K_i \]
where  $K/K_i$ is the $K$-set consisting of left cosets of a subgroup $K_i$ of $K$.
The $G$-set $G\times_K S$ is defined as a disjoint union of $G$-sets
\[  G\times_K S := \bigsqcup_{i\in I} G/K_i .\]
The canonical $K$-map \[ \imath\colon S \to G\times_K S, \qquad K_i \mapsto K_i\]
is injective. 

 \begin{proposition}\label{lem:fromHsetstoGset} \cite[Prop. 4.2]{BiMa23}
Let $K\leq G$ and $S$ a $K$-set. 
\begin{enumerate}
\item The canonical $K$-map $ \imath\colon S \to G\times_K S$ induces a bijection of orbit spaces $S/K \to   (G\times_K S) /G$.\label{2.1-orbits}

\item For each $s\in S$, the $K$-stabilizer $K_s$ equals the $G$-stabilizer $G_{\imath(s)}$.\label{2.1-stabilizers}

\item If $T$ is a $G$-set and $f\colon S \to T$ is  $K$-equivariant, then there is a unique $G$-map $\tilde f\colon  G\times_K S \to T$ such that $\tilde f \circ \imath = f$.\label{2.1-universal} 
 
\item  If $\imath(S)\cap g.\imath(S)\neq \emptyset$ for $g\in G$, then $g\in K$ and $\imath(S)=g.\imath(S)$.
\label{2.1-disjunion}

\item  In part three, if $f$ induces an injective map $S/K \to T/G$ and  $K_s = G_{f(s)}$ for every $s\in S$, then $\tilde f$ is injective.
\end{enumerate}
\end{proposition}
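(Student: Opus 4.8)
The plan is to deduce the injectivity of $\tilde f$ from the two standing hypotheses — that $f$ induces an injection $S/K\to T/G$ and that $K_s=G_{f(s)}$ for every $s\in S$ — by combining the explicit description $G\times_K S=\bigsqcup_{i\in I}G/K_i$ with the defining relation $\tilde f\circ\imath=f$ and the $G$-equivariance of $\tilde f$. The whole argument is a diagram chase; the only delicate moment is reconciling the $K$-action on $S$ with the $G$-action on $G\times_K S$ after applying $\imath$.

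First I would record that every element of $G\times_K S$ can be written as $g\cdot\imath(s)$ for some $g\in G$ and $s\in S$. Indeed, writing $S=\bigsqcup_{i\in I}K/K_i$ as $K$-sets and $G\times_K S=\bigsqcup_{i\in I}G/K_i$ as $G$-sets, the map $\imath$ carries the trivial coset of $K/K_i$ to the trivial coset of $G/K_i$, and since $G$ acts transitively on each $G/K_i$ every $gK_i$ is a $G$-translate of a point of $\imath(S)$. Now suppose $\tilde f(g_1\cdot\imath(s_1))=\tilde f(g_2\cdot\imath(s_2))$. Because $\tilde f$ is a $G$-map with $\tilde f\circ\imath=f$, this equation reads $g_1\cdot f(s_1)=g_2\cdot f(s_2)$ in $T$, so $f(s_1)$ and $f(s_2)$ lie in the same $G$-orbit.

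Next I would pass to orbit spaces. By $K$-equivariance of $f$ the assignment $[s]\mapsto[f(s)]$ is a well-defined map $\bar f\colon S/K\to T/G$, and the previous step gives $\bar f([s_1])=\bar f([s_2])$. The injectivity hypothesis on $\bar f$ then produces $k\in K$ with $s_2=k\cdot s_1$, and since $\imath$ is $K$-equivariant we have $g_2\cdot\imath(s_2)=(g_2k)\cdot\imath(s_1)$; so after renaming $g_2$ we may assume $s_1=s_2=:s$ and are left to show $g_1\cdot\imath(s)=g_2\cdot\imath(s)$ given $g_1\cdot f(s)=g_2\cdot f(s)$. The last equality says $g_2^{-1}g_1\in G_{f(s)}$; by hypothesis $G_{f(s)}=K_s$, and by part~(2) of Proposition~\ref{lem:fromHsetstoGset} we have $K_s=G_{\imath(s)}$. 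Hence $g_2^{-1}g_1$ fixes $\imath(s)$, which gives $g_1\cdot\imath(s)=g_2\cdot\imath(s)$ and finishes the proof.

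I do not expect a substantial obstacle, since each step is immediate from the definitions and from the parts of Proposition~\ref{lem:fromHsetstoGset} already established. The step that requires a moment of care is the reduction to the case $s_1=s_2$: one must observe that translating $s_1$ by an element $k\in K$ inside $S$ corresponds, after applying $\imath$, to translating by the same $k$ inside $G\times_K S$ — this is precisely the $K$-equivariance of $\imath$, and it is the point where the hypothesis controlling the $K$-orbits of $S$ is turned into information about the $G$-orbits of $G\times_K S$.
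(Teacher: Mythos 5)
Your argument for item (5) is correct: writing every element of $G\times_K S$ as $g\cdot\imath(s)$, using $\tilde f\circ\imath=f$ and $G$-equivariance of $\tilde f$ to land in orbit spaces, invoking injectivity of $\bar f$ to reduce to $s_1=s_2$, and then closing the loop with the identity $G_{f(s)}=K_s=G_{\imath(s)}$ (the last equality from item (2)) is exactly the expected diagram chase, and the renaming step via $K$-equivariance of $\imath$ is handled correctly. The paper itself offers no proof of this proposition — it simply cites Proposition 4.2 of [BiMa23] — so there is nothing to compare against; do note, though, that you address only item (5) and take items (1)--(4) as given (which is reasonable, since they follow immediately from the decomposition $G\times_K S=\bigsqcup_i G/K_i$, and your use of item (2) in the final step is the one genuine dependency).
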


We are interested in the case that $S$ is $K$-space, that is, $S$ is a topological space with a $K$-action by homeomorphisms.

 \begin{proposition}\label{lem:fromKsetToGsetTopology}
Let $K\leq G$ and $S$ a $K$-space.
Then $G\times_K S$ admits a unique topology such that the canonical $K$-map $ \imath\colon S \to G\times_K S$ is  a topological embedding, $\imath(S)$ is open in $G\times_K S$, and the $G$-action 
on $G\times_K S$ is by homeomorphisms.  
\end{proposition}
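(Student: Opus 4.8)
The plan is to exhibit the required topology explicitly as a quotient and then to deduce its uniqueness from the observation that $G\times_K S$ is covered by pairwise disjoint open translates of $\imath(S)$, each of which is forced to be a homeomorphic copy of $S$. For the construction I would regard $G$, and hence $K$, as discrete, equip $G\times S$ with the product topology (so $G\times S=\bigsqcup_{g\in G}\{g\}\times S$ is a topological coproduct of copies of $S$), and let $K$ act by $k\cdot(g,s)=(gk^{-1},ks)$, which is an action by homeomorphisms. The quotient set $(G\times S)/K$ is canonically and $G$-equivariantly identified with $G\times_K S$ --- apply the standard bijection $(G\times K/K_i)/K\cong G/K_i$ to each $K$-orbit of $S$ --- and under this identification the composite $S\cong\{e\}\times S\hookrightarrow G\times S\xrightarrow{\,q\,}(G\times S)/K$ is the canonical map $\imath$. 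I would give $G\times_K S$ the quotient topology for $q$. The left $G$-action $g'\cdot(g,s)=(g'g,s)$ on $G\times S$ is by homeomorphisms and commutes with the $K$-action, hence descends to a $G$-action on $G\times_K S$; each $g'$ acts as a homeomorphism because $g'$ and $(g')^{-1}$ induce continuous maps by the universal property of the quotient.

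Next I would verify that $\imath$ is an open topological embedding. It is continuous, being a composite ending with the quotient map, and injective because $[(e,s)]=[(e,s')]$ forces the witnessing element $k\in K$ to equal $e$, whence $s=s'$. For an open set $U\subseteq S$ one computes
\[
q^{-1}\bigl(q(\{e\}\times U)\bigr)=\bigsqcup_{k\in K}\{k^{-1}\}\times kU,
\]
which is open in $G\times S$ precisely because $K$ acts on $S$ by homeomorphisms; hence $\imath(U)$ is open in $G\times_K S$. So $\imath$ is an open continuous injection, i.e.\ a topological embedding, and $\imath(S)$ is open (take $U=S$).

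For uniqueness, let $\tau$ be any topology on $G\times_K S$ for which $\imath$ is a topological embedding, $\imath(S)$ is $\tau$-open, and $G$ acts by $\tau$-homeomorphisms. Every point of $G\times_K S$ has the form $g\cdot\imath(s)$, so $\{g\cdot\imath(S):g\in G\}$ is a $\tau$-open cover; since $\imath$ is $K$-equivariant, $g\cdot\imath(S)=\imath(S)$ for $g\in K$, and Proposition~\ref{lem:fromHsetstoGset}(\ref{2.1-disjunion}) shows that for every $g\in G$ the set $g\cdot\imath(S)$ either equals or is disjoint from $\imath(S)$. Hence $G\times_K S=\bigsqcup_{gK\in G/K}g\cdot\imath(S)$ is a partition into $\tau$-open pieces, and on each piece the bijection $s\mapsto g\cdot\imath(s)$, being a composite of the embedding $\imath$ with the homeomorphism $g\cdot(-)$, is a $\tau$-homeomorphism onto $g\cdot\imath(S)$. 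Therefore $V\subseteq G\times_K S$ is $\tau$-open if and only if $\imath^{-1}\bigl(g^{-1}\cdot(V\cap g\cdot\imath(S))\bigr)$ is open in $S$ for every $g\in G$; this criterion refers only to the three required properties, so $\tau$ is uniquely determined, and the quotient topology constructed above satisfies it.

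The construction and the embedding assertion are essentially routine; the step that genuinely uses the hypotheses, and which I expect to be the main obstacle, is the openness of $\imath$ --- which requires $K$ to act by homeomorphisms on $S$ --- together with the invocation of Proposition~\ref{lem:fromHsetstoGset}(\ref{2.1-disjunion}) to produce the disjoint decomposition into translates of $\imath(S)$ on which the uniqueness argument rests.
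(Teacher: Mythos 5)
Your proof is correct, and it arrives at the same topology as the paper by a slightly different route. The paper equips $G\times_K S$ directly with the final topology induced by the family of maps $\{g\circ\imath : g\in G\}$, verifies the three required properties, and for uniqueness observes that any qualifying topology $\tau$ is both coarser than the final topology and (by writing $V=\bigcup_g g.(g^{-1}.V\cap\imath(S))$) finer than it. You instead realize $G\times_K S$ concretely as the balanced product $(G\times S)/K$ (with $G$ discrete and $K$ acting diagonally), give it the quotient topology, and check the properties by computing the $K$-saturation $q^{-1}(q(\{e\}\times U))=\bigsqcup_{k\in K}\{k^{-1}\}\times kU$. These two descriptions produce the same topology, since both reduce to the criterion ``$V$ is open iff $\imath^{-1}(g^{-1}.V)$ is open in $S$ for every $g$'', and your uniqueness argument via the open partition $G\times_K S=\bigsqcup_{gK\in G/K}g\cdot\imath(S)$ and Proposition~\ref{lem:fromHsetstoGset}(\ref{2.1-disjunion}) is essentially the paper's argument made more explicit. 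The quotient-of-product construction is a bit heavier to set up but makes the openness of $\imath$ transparent via the saturation computation, whereas the final-topology formulation is more economical but leaves the same verification to the reader; both are fine.
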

\begin{proof}
  Consider the final topology $\tau_{fin}$  on  $G\times_K S$ induced by the functions $\{g\circ \imath \mid g\in G\}$ where 
  $g\circ \imath\colon S \to G\times_K S$ is given by $g\circ \imath (s) = g.\imath(s)$. 

  Let us verify that $\imath$ is a topological embedding and $\imath(S)$ is open. By definition $\imath$ is a continuous injection.
  Let $U$ be an open subset of $S$ and let us show that $\imath(U)$ is open in $G\times_K S$. For any $g\in G$, the preimage $(g\circ \imath)^{-1}(\imath(U))$ is empty if $g\not\in K$, and equal  to $g^{-1}.U$ if $g\in K$. Since the $K$-action on $S$ is by homeomorphisms, in any case, $(g\circ \imath)^{-1}(\imath(U))$ is an open subset of $S$. Therefore, $\imath(U)$ is an open subset of $G\times_K S$.

  Now we verify that the $G$-action on $G\times_K S$ is by homemorphisms. It is enough to show that each $g\in G$ acts as an open map $G\times_KS \to G\times_KS$.
  Let $g\in G$   and $V$ and open subset of $G\times_KS$. To show that $g.V$ is open, let $h\in G$. The preimage $(h\circ \imath )^{-1}(g.V)$ equals 
 $\imath^{-1}(h^{-1}g.V)=(g^{-1}h\circ \imath)^{-1}(V)$  
which is open in $S$ since $V$ is open in $S$.

  Let us conclude showing the uniqueness of the topology.  
  Consider a topology $\tau$ on $G\times_K S$  satisfying that   $\imath$ is a topological embedding and $G$-acts by homemorphisms on $G\times_K S$. Observe that $\tau$ is coarser than the final topology $\tau_{fin}$ induced by $\{g\circ \imath \mid g\in G\}$. Conversely, let $V \subset G\times_K S$ open in the final topology.   For any $g\in G$, $g^{-1}.V \cap \imath(U))$ is open in $\tau_{fin}$, hence $\imath^{-1}(g^{-1}.V \cap \imath(U))$ is open in $S$ and since $\imath$ is a topological embedding with respect to $\tau$, we have that $ g^{-1}.V \cap \imath(U) $ is open with respect to $\tau$.   Since
  \[ V= \bigcup_{g\in G} V\cap g.\imath(U)  =\bigcup_{g\in G}g.( g^{-1}.V \cap \imath(U)), 
  \]
  we have that $V$ is open with respect to $\tau$. Therefore, $\tau$ and $\tau_{fin}$ are the same topology.
\end{proof}  

From here on, if $S$ is a $K$-space and $K\leq G$ then $G\times_K S$ is a considered a $G$-space with the topology defined by the above proposition. Recall that a $G$-space $X$ is \emph{ cocompact} if the quotient space $X/G$ is compact. The following statement is a direct consequence of the above propositions.

\begin{corollary}\label{cor:Cocompactness}
If $S$ is a cocompact $K$-space and $K\leq G$ then $G\times_K S$ is a cocompact $G$-space.
\end{corollary}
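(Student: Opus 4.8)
The plan is to realise $(G\times_K S)/G$ as a continuous image of the compact space $S/K$, so that compactness is inherited.

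First I would invoke Proposition~\ref{lem:fromKsetToGsetTopology}, which gives that the canonical $K$-map $\imath\colon S\to G\times_K S$ is a topological embedding with $\imath(S)$ open and that $G$ acts on $G\times_K S$ by homeomorphisms; and the first part of Proposition~\ref{lem:fromHsetstoGset}, which says that $\imath$ induces a \emph{bijection} of orbit spaces $S/K\to(G\times_K S)/G$. In particular the $G$-saturation of $\imath(S)$ is all of $G\times_K S$; equivalently, the composite $p\coloneqq q_G\circ\imath\colon S\to (G\times_K S)/G$ is surjective, where $q_G\colon G\times_K S\to (G\times_K S)/G$ denotes the quotient map. (Alternatively, surjectivity is immediate from the description $G\times_K S=\bigsqcup_{i\in I}G/K_i$ with $\imath$ sending the trivial coset of $K/K_i$ to the trivial coset of $G/K_i$, since $G$ acts transitively on each $G/K_i$.)

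Second, I would note that $p$ is continuous, being a composite of continuous maps, and that it is constant on $K$-orbits of $S$: for $k\in K$ and $s\in S$ we have $p(k.s)=q_G(k.\imath(s))=q_G(\imath(s))=p(s)$, using the $K$-equivariance of $\imath$ together with the $G$-invariance of $q_G$. Hence, by the universal property of the quotient topology on $S/K$, the map $p$ factors through a continuous map $\bar p\colon S/K\to (G\times_K S)/G$, which is still surjective because $p$ is.

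Finally, since $S$ is a cocompact $K$-space, $S/K$ is compact, and therefore its continuous image $\bar p(S/K)=(G\times_K S)/G$ is compact; that is, $G\times_K S$ is a cocompact $G$-space. There is no genuine obstacle in this argument; the only point that needs the preceding work is that the orbit map $q_G\circ\imath$ is onto, which is exactly the surjectivity contained in the first part of Proposition~\ref{lem:fromHsetstoGset}.
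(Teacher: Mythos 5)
Your argument is correct and is exactly the unpacking of what the paper describes as a ``direct consequence'' of Propositions~\ref{lem:fromHsetstoGset} and~\ref{lem:fromKsetToGsetTopology}: you realise $(G\times_K S)/G$ as the continuous image of the compact space $S/K$ via the induced map on orbit spaces, which is surjective by Proposition~\ref{lem:fromHsetstoGset}(\ref{2.1-orbits}) and continuous because $\imath$ and the quotient projection are. The paper omits these details; your write-up supplies them along the intended lines.
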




\subsection{Pushouts of spaces}
Let $ X$ and $ Y$ be $G$-spaces, let $C\leq G$ be a subgroup and suppose $ X^C$ and $ Y^C$ are non-empty. Let $x\in  X^C$ and $y\in  Y^C$ be points. 

The\emph{  {$C$-pushout $ Z$ of $ X$ and $ Y$ with respect to the pair $(x,y)$}} is the quotient $G$-space $Z$
obtained by taking the disjoint union of $X$ and $Y$ and then  identifying the points $g.x$ and $g.y$ for every $g\in G$.
\[ 
  \begin{tikzcd} 
                    &    X \arrow[rrd, bend left, "\jmath_1"] \arrow{rd}{\imath_1} & &  \\
G/C \arrow[ru, "\kappa_1"] \arrow[rd, "\kappa_2"' ]  &     &  Z\ar[r, dashed] &  W\\                    &   Y\arrow[ ru, "\imath_2"'] \arrow[rru, bend right, "\jmath_2"'] & &
\end{tikzcd} \]
The standard universal property of pushouts holds for this construction: if $\jmath_1\colon  X \to  W$ and $\jmath_2\colon  Y \to  W$ are  $G$-maps such that  $\jmath_1\circ \kappa_1 = \jmath_2\circ \kappa_2$, then there is a unique   $G$-map $ Z\to  W$ such that above diagram commutes. 

\begin{remark}
Let $ Z$ be the $C$-pushout of $ X$ and $ Y$ with respect to   $(x_0,y_0)$.  For any point $x$ in $ X$,   $G_x = G_{\imath_1(x)}$ or $x$ is in the image of $\kappa_1$.  
\end{remark}

The analogous of the following proposition in the case that $X$ and $Y$ are graphs 
is~\cite[Prop. 4.5]{BiMa23}. The proof of the cited result hold mutatis mutandis for all items of the proposition below. Note that the second and fifth item below follow  directly from the definition of $C$-pushout as a quotient space, and our   Proposition~\ref{lem:fromKsetToGsetTopology} and Corollary~\ref{cor:Cocompactness} respectively. 

\begin{proposition}\label{lem:pushout}~\cite[Prop. 4.5]{BiMa23}
Let $G$ be the amalgamated free product group $A\ast_C B$, let  ${X}$ and $Y$ be an $A$-space and  a $B$-space respectively. Let $x\in {X}^C$    and $y\in {Y}^C$ and let ${Z}$ be the $C$-pushout of $G\times_{A} {X}$ and $G\times_{B} {Y}$  with respect to $( x, y)$. Let $z=\imath_1(x)=\imath_2(y)$. The following properties hold:
\begin{enumerate}
 \item The homomorphism $A_x \ast_C B_y \to G$ induced by the inclusions $A_x\leq G$ and $B_y\leq G$ is injective and has an image $G_z$. In particular $G_z=\langle A_x, B_y \rangle$ is isomorphic to $A_x \ast_C B_y$.\label{married}

    \item The $A$-map ${X}\hookrightarrow G\times_A {X} \xrightarrow{\imath_1} {Z}$
    is a topological  embedding. The analogous statement for $ {Y}\hookrightarrow G\times_B {Y} \xrightarrow{\imath_2} {Z}$
    holds. 
    
 \item[ ] From here on, we consider $ X$ and $ Y$ as subspaces of $ Z$ via these canonical embeddings.
\item For any $w\in Z$, there is $g\in G$ such that $g.w$  is an element  of  $ X \cup {Y}$. \label{transfer} 
\item For every  $w\in{X}$ which is not in the $A$-orbit of $x$, $A_w = G_w$ where $G_w$ is the $G$-stabilizer of $w$ in $ Z$. The analogous statement holds for $w\in Y$.\label{singles}

\item If $X$ is a cocompact $A$-space and $Y$ is a cocompact $B$-space, then   $Z$ is a cocompact $G$-space.  \label{cocompact}
  \item If ${X}$ and ${Y}$ are connected, then ${Z}$ is connected.\label{connected}
\end{enumerate}
\end{proposition}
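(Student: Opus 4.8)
The plan is to transcribe the proof of \cite[Proposition~4.5]{BiMa23}, which is written for graphs, into the setting of arbitrary $G$-spaces; the only two items where the combinatorial reasoning of that proof must be replaced by genuine topology are the topological-embedding statement and the cocompactness statement, which we obtain from Proposition~\ref{lem:fromKsetToGsetTopology} and Corollary~\ref{cor:Cocompactness} respectively. Set $P=G\times_A X$ and $Q=G\times_B Y$, write $\jmath_X\colon X\to P$ and $\jmath_Y\colon Y\to Q$ for the canonical embeddings of Proposition~\ref{lem:fromKsetToGsetTopology}, put $p_0=\jmath_X(x)$ and $q_0=\jmath_Y(y)$, and let $\pi\colon P\sqcup Q\to Z$ be the quotient map defining the $C$-pushout, so $z=\imath_1(p_0)=\imath_2(q_0)$. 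On underlying $G$-sets, $Z$ is the pushout of the $G$-maps $\kappa_1\colon G/C\to P$, $gC\mapsto g.p_0$, and $\kappa_2\colon G/C\to Q$, $gC\mapsto g.q_0$, which are well defined since $C\le A_x$ and $C\le B_y$ (the hypotheses $x\in X^C$, $y\in Y^C$).

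I would first settle the algebraic items. By Proposition~\ref{lem:fromHsetstoGset}\eqref{2.1-stabilizers}, $G_{p_0}=A_x$ and $G_{q_0}=B_y$. As $G/C$ is a transitive $G$-set, Proposition~\ref{lem:pushout-set} gives $G_z=\langle G_{p_0},G_{q_0}\rangle=\langle A_x,B_y\rangle$, which is the first half of item~\eqref{married}. For the isomorphism $G_z\cong A_x\ast_C B_y$ I would invoke the normal form theorem for amalgamated products: since $A_x\le A$, $B_y\le B$ and $A\cap B=C$, we have $A_x\cap B_y\subseteq C\subseteq A_x\cap B_y$, so $A_x\cap B_y=C$; and since $A_x\setminus C\subseteq A\setminus C$ and $B_y\setminus C\subseteq B\setminus C$, any reduced word over $A_x\ast_C B_y$ maps to a reduced, hence nontrivial, word over $A\ast_C B$, so the canonical map $A_x\ast_C B_y\to G$ is injective with image $\langle A_x,B_y\rangle=G_z$. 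Item~\eqref{singles} follows from the remark preceding the statement together with Proposition~\ref{lem:fromHsetstoGset}\eqref{2.1-stabilizers}, once one notes via Proposition~\ref{lem:fromHsetstoGset}\eqref{2.1-disjunion} that the glued orbit $G.p_0$ meets $\jmath_X(X)$ exactly in $\jmath_X(A.x)$. Item~\eqref{transfer} follows because $Z=\imath_1(P)\cup\imath_2(Q)$ and, by Proposition~\ref{lem:fromHsetstoGset}\eqref{2.1-orbits}, the $G$-orbit of any point of $P$ meets $\jmath_X(X)$ while that of any point of $Q$ meets $\jmath_Y(Y)$.

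The two easy topological items come next: item~\eqref{cocompact} holds because $P$ and $Q$ are cocompact $G$-spaces by Corollary~\ref{cor:Cocompactness}, hence so is $P\sqcup Q$, and $Z$ is a $G$-equivariant continuous image of $P\sqcup Q$, so $Z/G$ is compact. For item~\eqref{connected}, $\imath_1(P)$ is the union of the $G$-translates of the connected subspace $\imath_1(\jmath_X(X))$ and $\imath_2(Q)$ the union of those of $\imath_2(\jmath_Y(Y))$; two such translates intersect exactly when the corresponding vertices of the Bass--Serre tree of $A\ast_C B$ are equal or adjacent, and since that tree is connected the usual ``walk in the tree'' argument shows $Z$ is connected.

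It remains to prove the topological-embedding statement, which I expect to be the genuine obstacle. Proposition~\ref{lem:fromKsetToGsetTopology} gives that $\jmath_X\colon X\to P$ is a topological embedding onto an open subspace, so one must show that $\imath_1$ restricts to a topological embedding on $\jmath_X(X)$. Injectivity of the composite $X\to Z$ reduces to the identity $A\cap B=C$, which forces any chain of identifications between two points of $\jmath_X(X)$ to be trivial. The delicate part is that $X\to Z$ be \emph{open onto its image}: in the graph setting of \cite{BiMa23} this is essentially combinatorial, whereas for a general $G$-space the glued orbit $G.p_0$ need be neither discrete nor closed in $P$, and a quotient map restricted to a non-saturated subspace need not be a quotient map. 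I would handle this by analysing the saturation in $P\sqcup Q$ of an open subset of $\jmath_X(X)$, using the openness of $\jmath_X(X)$ in $P$ to keep control of $G.p_0$; it is worth noting that in the situations that matter the gluing points have $G$-stabilizer exactly $C$ on both sides --- automatic once $X$ and $Y$ are the coned-off spaces used in Theorem~\ref{thm:CombinationFine}, where one glues at spike-tips of stabilizer $C$ --- so that $G.p_0$ maps homeomorphically onto the corresponding orbit in $Q$ and $Z$ is just $P$ and $Q$ glued along a common open-embedded copy of $G/C$. Every remaining item is a routine translation of \cite[\S4]{BiMa23}.
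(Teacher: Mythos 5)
Your proposal follows the same route as the paper: cite~\cite[Prop.~4.5]{BiMa23} for the combinatorial content, and supply the two genuinely topological items --- the embedding and cocompactness --- via Proposition~\ref{lem:fromKsetToGsetTopology} and Corollary~\ref{cor:Cocompactness}. The paper's proof is exactly this, stated even more tersely. Your treatment of the algebraic items \eqref{married}, \eqref{transfer}, \eqref{singles} and of cocompactness and connectedness is correct and matches the intended argument.

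Where you and the paper part company is the second item. You are right that ``a quotient map restricted to a non-saturated subspace need not be a quotient map'' is the real issue, and right that the combinatorial proof of~\cite{BiMa23} does not settle it. But your proposed resolution is incomplete for the proposition as stated. You fall back on the observation that in the applications the gluing points are spike-tips of stabilizer exactly $C$; the proposition, however, is asserted for arbitrary $x\in X^C$, $y\in Y^C$, so $A_x$ and $B_y$ may be strictly larger than $C$ and $G_z$ may be a nontrivial amalgam $A_x\ast_C B_y$. Moreover the phrase ``$P$ and $Q$ glued along a common \emph{open}-embedded copy of $G/C$'' is inaccurate even in the spike-tip case: the orbit $G.p_0$ is discrete and closed in $P$, never open. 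The missing step is roughly the following: writing $W_0=\jmath_X(X)$ (clopen in $P\sqcup Q$), one shows that for $U$ open in $W_0$ the set $\operatorname{sat}(W_0\setminus U)$ is \emph{closed} in $P\sqcup Q$, so that its complement is the required open saturated set. Closedness comes from the normal-form identities $A\cap G_z=A_x$ and $B\cap G_z=B_y$, which imply that $\operatorname{sat}(W_0\setminus U)$ meets $W_0$ exactly in $W_0\setminus U$ and meets every other copy of $X$ (resp.\ $Y$) inside $G\times_A X$ (resp.\ $G\times_B Y$) in at most a single point; since those copies are disjoint and clopen, that contribution is discrete and closed. Without this step one cannot conclude that $\jmath_X(X)\to Z$ is open onto its image, so as written your argument for item~(2) has a genuine gap --- one also glossed over by the paper's ``follows directly''.
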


Under mild modifications on the spaces $X$ and $Y$, the $G$-space $Z$ from the previous proposition  admits a natural map $\xi\colon Z \to T$ into a Bass-Serre tree $T$ of an splitting of $G$, this is the content of the next proposition. The map $\xi$ will be our main tool in some of the remaining sections of the article.    

\begin{proposition}\label{prop:xi-map}
  Let $G$ be the amalgamated free product group $A\ast_C B$, let  ${X}$ and $Y$ be a (metric) $A$-space and  a (metric) $B$-space respectively. Let $x\in {X}^C$    and $y\in {Y}^C$, $\hat X = \spike(X, C, x) $, $\hat Y=\spike(Y, C, y)$ and let ${Z}$ be the $C$-pushout of $G\times_{A} \hat X$ and $G\times_{B} \hat Y$  with respect to $(\hat x, \hat y)$. Let $z=\imath_1(\hat x)=\imath_2(\hat y)$. Then there is a  $G$-tree $T$ and a $G$-map $\xi\colon  Z \to T$ of  with the following properties:
  \begin{enumerate}
      \item $\xi (z)$ is a vertex of $T$ and the preimage $\xi^{-1}(\xi (z))$ consists of only the point $z$.
      
      \item The image $\xi (X)$ is a single vertex of $T$ and analogously the image of $\xi (Y)$ is a single vertex of $T$.

      \item The restriction of $\xi$ to the open interval between $x$ and $\hat x$ is a topological embedding.

      \item The $G$-orbit of $\xi(z)$ and its complement in the set of vertices of $T$ make $T$ a bipartite graph.

      \item If a vertex $v$ of $T$ is not in the $G$-orbit of $\xi(z)$, then the preimage of the star of $v$ is a subspace of $Z$ of the form $g.\hat X$ or $g.\hat Y$ for some $g\in G$. 
  \end{enumerate}
 Moreover,  if $X$ and $Y$ are geodesic metric spaces and the corresponding $A$ and $B$ actions are by isometries, then $Z$ is a geodesic metric space, the $G$-action is by isometries, and the embeddings $X\hookrightarrow Z$ and $Y\hookrightarrow Z$ are isometric and convex.
\end{proposition}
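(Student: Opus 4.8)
The plan is to take $T$ to be the barycentric subdivision of the Bass--Serre tree of $G=A\ast_C B$; concretely $V(T)=(G/A)\sqcup(G/B)\sqcup(G/C)$, the vertex $gC$ is joined by an edge of length $1$ to $gA$ and to $gB$, and $G$ acts by left translation. By Bass--Serre theory $T$ is a tree, the action is simplicial, and the blocks $(G/A)\sqcup(G/B)$ and $G/C$ are $G$-invariant, so the $G$-orbit of the midpoint $eC$ together with its complement makes $T$ bipartite, which is item (4); the vertex $eC$ will serve as $\xi(z)$.

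To build $\xi$ I would proceed in three steps. First, define $\xi_X\colon\hat X=\spike(X,C,x)\to T$ by collapsing all of $X$ to the vertex $eA$ and, for each coset $aC\in A/C$, mapping the corresponding spike segment (from $a.x$ to the spike-point $a.\hat x$) homeomorphically onto the edge $[eA,aC]$ with $a.x\mapsto eA$ and $a.\hat x\mapsto aC$. This is well defined on the mapping cylinder (it respects the relation $(0,aC)\sim a.x$), continuous by the universal property of the quotient topology, and $A$-equivariant since each $a'\in A$ fixes $eA$ and carries $[eA,aC]$ to $[eA,a'aC]$ just as it carries $a.\hat x$ to $a'a.\hat x$. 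Second, since $\xi_X$ is continuous and $A$-equivariant, Proposition~\ref{lem:fromKsetToGsetTopology} and the universal property of $G\times_A(-)$ yield a continuous $G$-map $\tilde\xi_X\colon G\times_A\hat X\to T$ extending it; symmetrically define $\xi_Y\colon\hat Y\to T$ (collapsing $Y$ to $eB$, the spike segment $bC$ onto $[eB,bC]$) and extend to $\tilde\xi_Y\colon G\times_B\hat Y\to T$. Third, $\tilde\xi_X$ sends $g.\imath(\hat x)$ to $g.eC=gC$ and $\tilde\xi_Y$ sends $g.\imath(\hat y)$ to $gC$, so the two maps agree on the orbit identified in the $C$-pushout; by the pushout's universal property they descend to a unique continuous $G$-map $\xi\colon Z\to T$ with $\xi\circ\imath_1=\tilde\xi_X$, $\xi\circ\imath_2=\tilde\xi_Y$, and $\xi(z)=eC$.

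The remaining assertions are coset bookkeeping in $A\ast_C B$. For item (1): the only points of $Z$ mapping to a vertex of $G/C$ are the spike-points, and these form the single $G$-orbit of $z$; since $G_z=\langle A_{\hat x},B_{\hat y}\rangle=C$ by Proposition~\ref{lem:pushout} (applied to $\hat X$, $\hat Y$), the assignment $g.z\mapsto gC$ is a bijection of $G.z$ onto $G/C$, whence $\xi^{-1}(eC)=\{z\}$. Items (2) and (3) follow from the definition of $\xi_X,\xi_Y$: $\xi$ collapses the copy of $X$ to $eA$ and the copy of $Y$ to $eB$ and restricts to a homeomorphism of the open interval from $x$ to $\hat x$ onto the open edge $(eA,eC)$. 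For item (5), equivariance reduces the claim to the vertices $eA$ and $eB$; since the edges of $T$ at $eA$ are precisely the $[eA,aC]$ with $a\in A$, every spike emanating from a point of the copy of $X$ maps onto one of them, and every such edge is met, one gets $\xi^{-1}(\starNb(eA))=\imath_1(\hat X)$ and likewise $\xi^{-1}(\starNb(eB))=\imath_2(\hat Y)$.

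For the \emph{Moreover} clause I would fix any $\ell>0$ and use $\hat X=\spike(X,C,x,\ell)$, $\hat Y=\spike(Y,C,y,\ell)$, which are geodesic with $A$, resp.\ $B$, acting by isometries. Then $Z$ carries the structure of a graph of spaces over $T$: its pieces are the $G$-translates of the isometric copies of $\hat X$ and $\hat Y$, indexed by $G/A$ and $G/B$, two of them meet in at most one point, in a single spike-point exactly when the corresponding vertices of $T$ share a neighbour in $G/C$, so the nerve of this covering is $T$ itself. I would equip $Z$ with the associated path metric, equivalently $d_Z(w,w')=\sum_{i=0}^{n}d(q_i,q_{i+1})$ where $q_0=w,q_1,\dots,q_n,q_{n+1}=w'$ is the chain of spike-points crossed along the $T$-geodesic with consecutive $q_i$ in a common piece; because $T$ is a tree this chain is unique and admits no shortcut, so $d_Z$ is a genuine geodesic metric inducing the pushout topology, and $G$ acts by isometries since it permutes pieces and spike-points isometrically. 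The copy of $X$ is isometrically embedded --- any path leaving it passes into a pendant spike or a neighbouring piece and must re-enter through the same spike-point, and if $p,q\in X$ have spikes meeting at a common spike-point then $p=b.x$, $q=b'.x$ with $b.\hat x=b'.\hat x$, forcing $b^{-1}b'\in C\leq A_x$ and so $p=q$ --- and convex, because spikes are pendant and, $T$ being a tree, a geodesic cannot leave the piece $eA$ and return; the same applies to $Y$. I expect this last paragraph to be the main obstacle: making the path metric on $Z$ rigorous and verifying the isometric and convex embeddings, the delicate inputs being that $T$ is a tree (no length-shortening detours) and that $C\leq A_x$ (distinct points of $X$ cannot share a spike-endpoint); everything before it is a direct appeal to the universal properties established earlier in the section together with elementary coset arithmetic.
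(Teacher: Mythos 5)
Your proof is correct for the proposition as stated, but it takes a genuinely different route from the paper's, and in fact your route is the one that actually works for the stated hypotheses. You take $T$ to be the barycentric subdivision of the Bass--Serre tree of $A\ast_C B$, with middle vertices $G/C$. The paper instead uses the Bass--Serre tree of the refined splitting $A\ast_{A_x}(A_x\ast_C B_y)\ast_{B_y}B$, with middle vertices $G/(A_x\ast_C B_y)$, asserting along the way that the $G$-stabilizers of $\hat x$ and of $z$ are $A_x$ and $A_x\ast_C B_y$. But $\hat x$ is the spike-point of $\spike(X,C,x)$, not the base point $x$, so its $A$-stabilizer is $C$, and Proposition~\ref{lem:pushout} applied to $(\hat X,\hat Y,\hat x,\hat y)$ gives $G_z=\langle A_{\hat x},B_{\hat y}\rangle=C$, exactly as you compute. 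Under the paper's $\xi$, whenever $A_x\ne C$ the fiber $\xi^{-1}(\xi(z))$ contains the full $(A_x\ast_C B_y)/C$ translate set of $z$, violating item~(1), and the star of the vertex $A$ has only $|A/A_x|$ edges while $\spike(X,C,x)$ has $|A/C|$ spikes, so item~(5) also fails. Your tree is the correct choice, and your derivation of $\xi$ via the universal property of the $C$-pushout, as well as your treatment of the path metric and of which pieces meet in which spike-points, parallels the paper's reasoning closely while repairing this slip.

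One caveat worth recording: the downstream Theorem~\ref{thm:CombinationFine} needs a point $z$ with $G_z=\langle K_1,K_2\rangle$, and the HNN analogue Theorem~\ref{thm:HNNFine} uses $\spike(X,K,L,x,y)$ rather than $C$-spikes. This strongly suggests the proposition was meant to be stated with $\hat X=\spike(X,A_x,x)$ (so that $A_{\hat x}=A_x$), in which case the paper's refined-splitting tree and stabilizer claims become correct and your $G/C$ tree no longer matches. As literally stated, however, yours is the valid proof; if you instead adopt the $\spike(X,A_x,x)$ reading, you would need to swap your barycentric-subdivision tree for the paper's refined-splitting tree throughout.
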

\begin{proof} 
First, note that all statements in Proposition~\ref{lem:pushout} hold for $Z$ as a $C$-pushout of  $G\times_A  \hat X$ and $G\times_B \hat Y$ with respect to $(\hat x, \hat y)$.  

Let us define the tree $T$ and the map $\xi$.   The natural embeddings $X\hookrightarrow \hat X$ and $\hat X \hookrightarrow G\times_A \hat X$ show that $X$ is canonically identified with an $A$-equivariant subspace of $G\times_A \hat X$. Analogously, there is a canonical identification of $Y$ with a $B$-equivariant subspace of $G\times_B \hat Y$.

Consider the splitting $G=A\ast_{A_x}(A_x\ast_CB_y)\ast_{B_y}B$ and observe that the subgroups $A_x$, $B_y$ and $A_x\ast_C B_y$ are naturally identified with the $G$-stabilizers of $\hat x\in G\times_A \hat X$, $\hat y\in G\times_B  \hat Y$, and $z\in Z$. Let $T$ denote the Bass-Serre tree of this splitting. The vertex and edge sets of $T$ can be described as 
\[ V(T) = G/A \sqcup G/(A_x\ast_C B_y)  \sqcup G/B \]
and 
\[ E(T) = \{ \{ gA, g(A_x\ast_C B_y) \} \mid g\in G \} \sqcup \{ \{g(A_x\ast_C B_y), gB\} \mid g\in G \}\]
respectively. Note that $T$ is a bipartite $G$-graph, the equivariant bipartition of the vertices given by $G/A\sqcup G/B$ and $G/(A_x\ast_CB_y)$. 

Now we define a $G$-map $\xi\colon Z \to T$ using the universal property of $C$-pushouts. Consider the $A$-map from $\hat X$ to $T$ that maps the $A$-subspace $X$ to the vertex $A$, the point $\hat x$ to the vertex $A_x\ast_C B_y$ and the open interval between $x$ and $\hat x$ (isometrically) homeomorphically to the open edge $\{A, A_x\ast_C B_y\}$. This induces a $G$-map  $\jmath_1\colon G\times_A  \hat X \to T$. Analogously, there is $B$-map $ \hat Y\to T$ such maps 
 the $B$-subspace $Y$ to the vertex $B$, the point $\hat y$ to the vertex $A_x\ast_C B_y$ and the open edge between $y$ and $\hat y$ to the edge $\{B, A_x\ast_C B_y\}$; this $B$-maps  induces a unique $G$-map $\jmath_2\colon G\times_B  Y \to T$. 
\[ 
  \begin{tikzcd} 
                    &   G\times_A \hat X \arrow[rrd, bend left, "\jmath_1"] \arrow{rd}{\imath_1} & &  \\
G/C \arrow[ru, "\kappa_1"] \arrow[rd, "\kappa_2"' ]  &     & Z\ar[r, dashed, "\xi"] & T\\                    &  G\times_B \hat Y\arrow[ ru, "\imath_2"'] \arrow[rru, bend right, "\jmath_2"'] & &
\end{tikzcd} \] 
Consider the $G$-maps $\kappa_1 \colon G/C \to G\times_A  \hat X$ and $\kappa_2\colon G/C \to G\times_B  \hat Y$ given by $C\mapsto \hat x$ and $C\mapsto \hat y$ respectively. Since $\jmath_1\circ \kappa_1 = \jmath_2\circ \kappa_2$,  the universal property  implies that there is a surjective $G$-map $\xi\colon  Z\to T$. The verification of the five items in the statement of the proposition is mutatis mutandis the one for~\cite[Prop. 4.5(8)]{BiMa23} after replacing the word subgraph by subspace.  

Now we prove the last part of the proposition. Suppose that $X$ and $Y$ are geodesic metric spaces and the corresponding actions are by isometries. Since $X$ is a geodesic metric space, $\spike(X, C, x)$ is a geodesic metric space with $X$ an isometrically embedded convex subspace. The analogous remarks  for  $\spike(Y, C, y)$ and $Y$ hold as well. 

The metric on $Z$ is defined locally, and then the global metric is
defined as the induced path metric. Locally, every point $w$ of $Z$ that
is not in the $G$-orbit of $z$ is contained in a subspace of the form
$g.\hat X$ or $g.\hat Y$. On the other hand, there is a neighborhood $W$
of $z$ in $Z$ isometric to the geodesic metric space of diameter two
obtained as the quotient of $((A_x\ast_C A_y)/C)\times (0,1]$ by the
relation $\sim$ which identifies all elements with second coordinate
equal to one. Note that the intersection $\hat X \cap W$ is the half
open segment of length one between $x$ and $z$, closed at $z$ on which
the metrics on $\hat X$ and $W$ coincide. Analogously, the metrics on
$\hat Y$ and $W$ coincide in the intersection $\hat Y \cap W$. The map
$\xi\colon Z \to T$ shows that if a path in $Z$ has an initial and
terminal point in $\hat X$, then either the path is embedded and stays
inside $\hat X$; or the path intersects the complement of $\hat X$ and
is not embedded. The analogous statement holds for $\hat Y$. It follows
that if $\gamma$ is a shortest path between two points of $Z$, then
$\gamma \cap g.\hat X$ is either empty or a geodesic in $g.\hat X$, and
analogously for $g.\hat Y$. Since $T$ is a tree, it follows that between
any two points of $Z$ there is a unique shortest path between them, and
therefore $Z$ is a geodesic space and moreover $g.\hat X$ and $g.\hat Y$
are convex subspaces of $Z$ for every $g\in G$.
\end{proof}

We conclude the section with the proof of the main result of the section. 

\begin{proof}[Proof of Theorem~\ref{thm:CombinationFine}]
 Let $X$ be the $C$-pushout of the $G$-spaces $\hat X_1= G\times_{G_1}
\spike(X_1, C, x_1)$ and $\hat X_2= G\times_{G_2}
\spike(X_1, C, x_2)$ with respect to $(x_1,x_2)$, and let $z$ be
the image of $x_1$ in $\Gamma$. 
Then the statements except for 
(\ref{item:Amalgamation-relative-p.d}) 
follow directly from
Propositions~\ref{lem:pushout} and~\ref{prop:xi-map}.
\end{proof}

 \begin{proof}[Proof of (\ref{item:Amalgamation-relative-p.d}) in \cref{thm:CombinationFine}]
 We will apply Lemma \ref{lem:criteria-p.d.action}.
 Let $z\in Z$ be the base point given in (\ref{item:Amalgamation-basepoint})
 in \cref{thm:CombinationFine}. 
 We fix $r>0$. Let $g\in G$ such that
 $d_Z(z,g.z)\leq r$. Suppose that 
 \begin{align*}
     g=a_1b_1a_{2}b_2\cdots a_kb_kc
 \end{align*}
where $a_i\in G_1\setminus C$ and $b_i\in G_2\setminus C$, 
for $1\leq i\leq k$, and $c\in C$. 
Since $c.z=z$, we have $g.z=a_1b_1a_{2}b_2\cdots a_kb_k. z$.
Let $\ell$ be the constant given by  \cref{thm:CombinationFine} \eqref{item:Amalgamation-geometric-embedding-CF} and observe that $k\ell\leq r$.

 Let $\gamma\colon [0,1]\to Z$ be a geodesic from $z$ to $g.z$.
 By the construction of geodesics in $Z$ and the construction of the 
 action of $G$ on $Z$, we have the following.
 \begin{claim}
  \label{claim:amalgamation-path}
   There exists a sequence
  $0< t_1\leq s_1\leq t_2\leq s_2\leq \dots t_k\leq s_k=1$ such that,
  for all $i$,
 \begin{itemize}
  \item $\gamma(t_i) = a_1b_1\dots a_i.z$, 
  \item $\gamma(s_i) = a_1b_1\dots a_ib_i.z$,
  \item $\gamma([0,t_{1}])\subset \spike(X_1, C, x_1,l)$,
  \item $\gamma([s_{i-1},t_{i}])\subset a_1b_1\dots b_{i-1}.\spike(X_1, C, x_1,l)$,
  \item $\gamma([t_i,s_i])\subset a_1b_1\dots b_{i-1}a_i.\spike(X_2, C, x_2,l)$. 
 \end{itemize}
 \end{claim}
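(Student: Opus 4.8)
The plan is to prove the claim by pushing $\gamma$ forward along the $G$-map $\xi\colon Z\to T$ onto the Bass--Serre tree $T$ supplied by Theorem~\ref{thm:CombinationFine}\eqref{item:Amalgamation-tree} (equivalently, Proposition~\ref{prop:xi-map}). The geometric input is the fact, recorded in the proof of Proposition~\ref{prop:xi-map} and in Theorem~\ref{thm:CombinationFine}\eqref{item:Amalgamation-geometric-embedding-CF}, that each translate $h.\spike(X_i,C,x_i,\ell)$ is a convex subspace of $Z$; hence a geodesic of $Z$ meets any such subspace in a (possibly empty) connected subsegment and cannot leave it and later return. Since $\xi\circ\gamma$ is a path in $T$ from $\xi(z)$ to $\xi(g.z)=g.\xi(z)$, and a backtrack of $\xi\circ\gamma$ across an edge of $T$ would force $\gamma$ to leave and re-enter the preimage of a half-open star of a vertex of $T$ --- which by Proposition~\ref{prop:xi-map} is exactly such a translated spike-space --- the path $\xi\circ\gamma$ must traverse the geodesic segment $[\xi(z),g.\xi(z)]$ of $T$ monotonically.

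Next I would transfer this tree picture back to $Z$. Recall from Proposition~\ref{prop:xi-map} that $T$ is bipartite with one part the $G$-orbit of $\xi(z)$, that $\xi^{-1}(\xi(z))=\{z\}$ (so $\xi^{-1}(w.\xi(z))=\{w.z\}$ for every $w\in G$), and that the preimage of the half-open star of a vertex not in $G.\xi(z)$ is a translate of $\spike(X_1,C,x_1,\ell)$ or of $\spike(X_2,C,x_2,\ell)$. Thus along $[\xi(z),g.\xi(z)]$ central and non-central vertices alternate, and $\gamma$ passes, in order of the parameter, through a finite sequence $z=p_0,p_1,\dots,p_m=g.z$ of translates of $z$, with the subarc of $\gamma$ between $p_{j-1}$ and $p_j$ contained in a single translated spike-space; by convexity that subarc is a geodesic of that spike-space.

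It then remains to identify the $p_j$ and the associated spike-spaces from the normal form $g=a_1b_1\cdots a_kb_kc$, $a_i\in G_1\setminus C$, $b_i\in G_2\setminus C$, $c\in C$, using the Bass--Serre combinatorics of the splitting $G=G_1\ast_{K_1}(K_1\ast_CK_2)\ast_{K_2}G_2$ underlying $T$; concretely, $h.\spike(X_1,C,x_1,\ell)=h'.\spike(X_1,C,x_1,\ell)$ iff $h^{-1}h'\in G_1$, with the analogue for $X_2$. One checks that the separating points run through $z,\,a_1.z,\,a_1b_1.z,\,\dots,\,a_1\cdots a_k.z,\,a_1\cdots a_kb_k.z$ and that the corresponding spike-spaces are $\spike(X_1,C,x_1,\ell)$, $a_1.\spike(X_2,C,x_2,\ell)$, $a_1b_1.\spike(X_1,C,x_1,\ell)$, and so on alternating; the key elementary remark is that $z$ and $a_i.z$ both lie in the $G_1$-invariant subspace $\spike(X_1,C,x_1,\ell)$ (as $a_i\in G_1$), so $a_1\cdots b_{i-1}.z$ and $a_1\cdots b_{i-1}a_i.z$ both lie in $a_1\cdots b_{i-1}.\spike(X_1,C,x_1,\ell)$, and symmetrically for the $X_2$-pieces. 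Moreover $a_1\cdots a_kb_kc.z=a_1\cdots a_kb_k.z$ since $c\in C\le\langle K_1,K_2\rangle=G_z$, which matches the last separating point with $g.z$ and gives $s_k=1$. Letting $t_i$ and $s_i$ be the $\gamma$-times at $a_1\cdots a_i.z$ and $a_1\cdots a_ib_i.z$ then yields the stated decomposition.

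The main obstacle is this last, combinatorial, step together with the degenerate cases: if $a_i\in K_1$ (or $b_i\in K_2$) then $a_i\in G_z$, the points $a_1\cdots b_{i-1}.z$ and $a_1\cdots b_{i-1}a_i.z$ coincide, the associated spike-space is traversed trivially, and $t_i$ may equal $s_{i-1}$ (and $t_1$ may equal $0$); so the inequalities in the claim are to be read as weak, and the strict $0<t_1$ --- together with the subsequent estimate $k\ell\le r$ --- holds once the normal form is taken reduced with respect to $K_1$ and $K_2$, in which case consecutive separating points are distinct and hence at distance at least $2\ell$ in $Z$. The remaining verifications --- that $\gamma$ restricts to a genuine geodesic in each spike-space it meets, and that exactly the listed spike-spaces occur --- are then routine consequences of the no-backtracking property and of convexity.
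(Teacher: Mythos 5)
Your proposal is correct and, as far as one can tell, is precisely the argument the paper has in mind: the paper gives no proof of the claim beyond the one-line appeal to ``the construction of geodesics in $Z$'' (i.e.\ the convexity of each translated spike-space and the cut-point/no-backtracking structure coming from the tree-of-spaces map $\xi$, established in the proof of Proposition~\ref{prop:xi-map}) and ``the construction of the action of $G$ on $Z$'' (i.e.\ the Bass--Serre combinatorics of the splitting $G=G_1\ast_{K_1}(K_1\ast_C K_2)\ast_{K_2}G_2$). Your proposal simply unpacks that sentence in full.

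One point worth flagging: your discussion of the degenerate case $a_i\in K_1\setminus C$ (respectively $b_i\in K_2\setminus C$) is a genuine subtlety, and whether it actually arises depends on whether the basepoint $z$ is the image of $x_i$ or of $\hat x_i$ in the pushout --- the text vacillates between these, since Proposition~\ref{prop:xi-map} forms the pushout at $(\hat x,\hat y)$ while the proof of Theorem~\ref{thm:CombinationFine} says ``with respect to $(x_1,x_2)$''. If $z$ is the image of the spike-tip $\hat x_i$, its $G$-stabilizer is $C$ (not $\langle K_1,K_2\rangle$), so $a_i\in G_1\setminus C$ already forces $a_1\cdots b_{i-1}.z\neq a_1\cdots b_{i-1}a_i.z$, the strict inequality $0<t_1$ holds, and consecutive separating points are genuinely at distance $\geq 2\ell$ --- no reduction of the normal form is needed. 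If $z$ is the image of $x_i$, then $G_z=\langle K_1,K_2\rangle$, your concern is real, and both the strict inequality in the claim and the subsequent estimate $k\ell\leq r$ in the surrounding proof require the syllables to be non-trivial modulo $K_1,K_2$, exactly as you note. So your caveat correctly identifies a gap the paper glosses over, and your suggested fix (reduce the normal form and read the inequalities weakly) is the right one in that case.
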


 Let $S_i\subset G_i$ be a finite relative generating set for 
 $(G,\mathcal{H}_i\cup\{K_i\})$. We denote by $\dist_i$ the graph distance of
 the coned-off Cayley graph $\hat\Gamma(G_i,\mathcal{H}_i\cup\{K_i\},S_i)$.

 Let $\Lambda_i=\Lambda_i[\hat{x}_i,S_i]\colon \R_{\geq 0}\to \N$ 
 be the non-decreasing function given in
 Lemma~\ref{lem:rel-p.d.-coarse-embedding} for the action of $G_i$ 
 on $\spike(X_i, C, x_i)$ with the base point $\hat{x}_i$. 
 Here, we can identify
 $\hat{x}_i$ with $z$ 
 by (\ref{item:Amalgamation-topological-embedding-CF}) in 
 \cref{thm:CombinationFine}. 

 We denote by $d_2$ the metric on $\spike(X_2, C, x_2,l)$. 
 Since 
 \begin{align*}
  d_{2}(z,b_i.z) =  d_Z(a_1b_1\dots a_i.z,a_1b_1\dots a_ib_i.z) =
  d_Z(\gamma(t_i),\gamma(s_i)) \leq r,
 \end{align*}
 we have $\dist_2(1,b_i)\leq \Lambda_2(r)$. Similarly, we have
 $\dist_1(1,a_i)\leq \Lambda_1(r)$.

 Let $S=S_1\cup S_2$. We denote by $\dist_{\hat\Gamma}$ the 
 graph distance of
 the coned-off Cayley graph 
 $\hat\Gamma(G,\mathcal{H}\cup \{\langle K_1,K_2 \rangle\},S)$. 
 It follows that
 \begin{align*}
  \dist_{\hat\Gamma}(1,g)\leq \sum_{i=1}^k
  \left(
    \dist_{\hat\Gamma}(1,a_{i}) 
     + \dist_{\hat\Gamma}(1,b_i)  \right) + \dist_{\hat\Gamma}(1,c)
  \leq \frac{r}{l}(\Lambda_1(r)+\Lambda_2(r)) + 1
 \end{align*}
 Therefore, the set $\{g\in G \mid U\cap g.U \neq \emptyset\}$ has a 
 finite diameter relative to $\mathcal{H}\cup \{\langle K_1,K_2 \rangle\}$.
 By applying Lemma~\ref{lem:criteria-p.d.action}, it follows 
 that the action of $G$ on $Z$ is properly discontinuous relative to 
 $\mathcal{H}\cup \{\langle K_1,K_2 \rangle\}$
\end{proof}

\subsection{Coalescence of spaces for HNN-extensions} 

\begin{definition}
\label{defn-coalescence}(Coalescence in sets)~\cite{BiMa23}
 Let $H$ be a subgroup of a group $A$,  
 let $\varphi\colon H\to A$ be a monomorphism and let $G$ be the HNN-extension
 \[ 
  G=A\ast_{\varphi}
   =\langle G, t\mid  t c t^{-1} =\varphi(c)~\text{for all $c\in H$} \rangle .
\]  
 Let $ X$ be an $A$-space, $x\in X^H$ and $y\in  X^{\varphi(H)}$. 
 The {\emph{ $\varphi$-Coalescence of $X$ with respect to $(x,y)$}} 
 is the $G$-space $Z$ arising as quotient  of $G\times_A  X$ 
 by the equivalence relation generated by
 \[ 
  \mathcal{B}=\{ (gt.x, g.y) \mid  \text{ $g\in G$} \}. 
 \] 
 Note that the quotient map 
 \[ \rho\colon G\times_A X \to Z \] 
 is $G$-equivariant.  
 We use the map $\rho$ in the statements of the following propositions.
\end{definition}

The following is the analogous of Proposition~\ref{lem:pushout} for HNN-extensions.

\begin{proposition}
\cite[Proposition 5.5]{BiMa23}\label{lem:HNN}
Let $H$ be a subgroup of a group $A$,  let $\varphi\colon H\to A$ be a monomorphism and let $G=A\ast_{\varphi}$.  Let $ X$ be an $A$-space, let $x,y\in  X$ in different $A$-orbits such that $A_x=H$,   $A_y=\varphi(H)$.   If
$ Z$ is the $\varphi$-Coalescence of $ X$ with respect to $(x,y)$, and $z=\rho(y)$. The following properties hold:

\begin{enumerate}
 \item The $G$-stabilizer $G_z$ of $z$ equals the subgroup $\varphi(H)$.\label{identified vertices-HNNN}

    \item The $A$-map  ${X}\hookrightarrow G\times_A {X} \xrightarrow{} {Z}$
    is a topological  embedding.
    
 \item[] From here on, we consider $ X$  as a subspace of $ Z$ via this canonical embedding.

 \item For every $w \in Z$, there is $g\in G$ such that $g.w$  is an element of $ X$. \label{transfer-HNN-}
 
\item For $w\in {X}$ which is not in the $A$-orbit of $x$, $A_w = G_w$ where $G_w$ is the $G$-stabilizer of $w$ in $ Z$.

\item If $X$ is a cocompact $A$-space, then $Z$ is a cocompact $G$-space.

\item If $X$ is connected, then $Z$ is connected.
 \end{enumerate}
\end{proposition}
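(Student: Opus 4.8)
The plan is to reduce every assertion to the explicit structure of $G\times_A X$ furnished by Propositions~\ref{lem:fromHsetstoGset} and~\ref{lem:fromKsetToGsetTopology}, exactly as in the amalgamated product case (Proposition~\ref{lem:pushout}). Since the sets $g.\imath(X)$ for $gA\in G/A$ are pairwise disjoint by Proposition~\ref{lem:fromHsetstoGset}(\ref{2.1-disjunion}) and each is open (the image of the open set $\imath(X)$ under the homeomorphism $g$), one has a topological disjoint union $G\times_A X=\bigsqcup_{gA\in G/A} g.\imath(X)$ of clopen copies of $X$. I would then unwind the equivalence relation generated by $\mathcal{B}=\{(gt.x,g.y)\mid g\in G\}$: all of its classes have size at most two, a non-trivial class being a pair $\{g.\imath(y),\,gt.\imath(x)\}$ whose members lie in the distinct copies indexed by $gA$ and $gtA$ (distinct because $t\notin A$). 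Thus $Z$ is assembled from a $G/A$-indexed family of clopen copies of $X$ glued one point to one point along the Bass--Serre tree of $A\ast_\varphi$. From this picture the combinatorial items (1), (3), (4) follow by stabilizer bookkeeping as in the graph case~\cite[Proposition 5.5]{BiMa23}, while the topological items (2), (5), (6) follow from the clopen-copies description.

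For (3), since $G\times_A X=G\cdot\imath(X)$ and $\rho$ is $G$-equivariant, every point of $Z$ is $G$-equivalent to a point of $\rho(\imath(X))$. For (1) and (4) the crux is that $tHt^{-1}=\varphi(H)$ holds in $G$, so by Proposition~\ref{lem:fromHsetstoGset}(\ref{2.1-stabilizers}) both members of a non-trivial $\sim$-class $\{g.\imath(y),\,gt.\imath(x)\}$ have $G$-stabilizer $g\varphi(H)g^{-1}$, and no $h\in G$ can interchange them, since an equality $h.(g.\imath(y))=gt.\imath(x)$ would force $x$ and $y$ into the same $A$-orbit, contrary to hypothesis. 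Hence $\rho^{-1}(z)=\{\imath(y),t.\imath(x)\}$ has $G$-stabilizer $\varphi(H)$, which is (1); and for $w\in\imath(X)$ outside the $A$-orbit of $x$, either $\imath(w)$ is a singleton class and $G_w=G_{\imath(w)}=A_w$, or $w\in A.y$ and $\imath(w)$ is matched to a partner with the same stabilizer, so $G_w=A_w$ again, which is (4).

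For (2), $\rho$ is injective on the clopen copy $\imath(X)$ because each $\sim$-class meets it at most once, and it is continuous as a restriction of $\rho$; to see that it is open onto its image I would take $U\subseteq\imath(X)$ open, set $F=\imath(X)\setminus U$, and note that the $\rho$-saturation of $F$ is $F$ together with one matched point in an adjacent clopen copy for each point of $F$ lying in the $A$-orbit of $x$ or of $y$ — a closed set, since one-point subsets of $X$ are closed and the copies are clopen and disjoint — so that $Z\setminus\rho(F)$ is open in $Z$ and meets $\rho(\imath(X))$ precisely in $\rho(U)$. For (5), $G\times_A X$ is a cocompact $G$-space by Corollary~\ref{cor:Cocompactness}, and $\rho$ induces a continuous surjection $(G\times_A X)/G\to Z/G$, forcing $Z/G$ compact. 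For (6), the connected sets $\rho(g.\imath(X))$ cover $Z$, and $\rho(g.\imath(X))$ meets $\rho(gt.\imath(X))$ at $\rho(gt.\imath(x))=\rho(g.\imath(y))$ and meets $\rho(gt^{-1}.\imath(X))$ at $\rho(g.\imath(x))=\rho(gt^{-1}.\imath(y))$, so since $G=\langle A,t\rangle$ the overlap pattern of these sets is connected and hence so is $Z$.

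The hard part will be item (2): one must check that gluing the clopen copies of $X$ together along the tree pattern does not disturb the subspace topology on any single copy, equivalently that $\rho$-saturations of closed subsets of a copy remain closed. This is exactly where disjointness and openness of the copies, together with closedness of one-point subsets of $X$, are used; for $X$ a graph or a geodesic metric space these hypotheses hold automatically, which is precisely why the graph argument of~\cite{BiMa23} transfers verbatim to arbitrary $A$-spaces.
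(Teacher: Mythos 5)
Your overall strategy is sound and matches what the paper implicitly intends: the paper simply cites \cite[Proposition~5.5]{BiMa23} and, for the analogous amalgamated case (Proposition~\ref{lem:pushout}), states that the graph argument carries over mutatis mutandis, with item (2) deduced from Proposition~\ref{lem:fromKsetToGsetTopology} and the quotient construction. Your unwinding of $G\times_A X$ as a disjoint union of clopen copies, the observation that the equivalence relation has classes of size at most two spanning adjacent copies in the Bass--Serre tree, and the stabilizer bookkeeping for items (1), (3), (4) are all correct and in the right spirit. The cocompactness and connectedness arguments in (5) and (6) are also fine.

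The one place your write-up is not quite right for the statement as given is item (2), where you appeal to ``one-point subsets of $X$ are closed.'' That is a $T_1$ hypothesis on $X$, which the proposition does not assume; your closing sentence even claims the argument ``transfers verbatim to arbitrary $A$-spaces,'' but verbatim it only transfers to $T_1$ spaces. For the paper's actual applications (Cayley--Abels graphs, geodesic metric spaces, spikes of these) the hypothesis is automatic, so in practice nothing is lost; but the general statement is still true, and a uniform argument is available: instead of showing saturations of closed sets in $\imath(X)$ are closed, show saturations of open sets extend to open saturated subsets of $G\times_A X$ by recursing along the Bass--Serre tree from the root $A$, declaring at each child copy either the whole copy (if the matched point is already included) or the complement of the \emph{closure} $\overline{\{p\}}$ of the matched point (if not); this uses only that the tree has no cycles and that distinct edges at a vertex touch distinct points, which you already established. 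A small notational slip: in your item (4), $w$ is taken in $X$, so writing $\imath(w)$ for its image in $G\times_A X$ is fine, but you switch between treating $w$ as a point of $X$ and of $\imath(X)$.
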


There is an analogous statement to Proposition~\ref{prop:xi-map} for a coalescence.

\begin{proposition}
Let $H$ be a subgroup of a group $A$,  let $\varphi\colon H\to A$ be a monomorphism and let $G=A\ast_{\varphi}$.
Let $ X$ be an $A$-space, let $x,y\in  X$ in different $A$-orbits such that $A_x=H$,   $A_y=\varphi(H)$. Let $Z$ be the $\varphi$-coalescence of $\hat X = \spike(X,H,\varphi(H),x,y)$ with respect to $(\hat x, \hat y)$, and let $z=\rho(\hat y)$.

Then there is a  $G$-tree $T$ and a $G$-map $\xi\colon  Z \to T$ of  with the following properties:
  \begin{enumerate}
      \item $\xi (z)$ is a vertex of $T$ and the preimage $\xi^{-1}(\xi (z))$ consists of only the point $z$.
      
      \item The image $\xi (X)$ is a single vertex of $T$. 

      \item The restriction of $\xi$ to the open interval between $x$ and $\hat x$ is a topological embedding. The analogous statement for the open interval between $y$ and $\hat y$ holds. 

      \item The $G$-orbit of $\xi(z)$ and its complement in the set of vertices of $T$ make $T$ a bipartite graph.

      \item If a vertex $v$ of $T$ is not in the $G$-orbit of $\xi(z)$, then the preimage of the star of $v$ is a subspace of $Z$ of the form $g.\hat X$  for some $g\in G$. 
  \end{enumerate}
  Moreover, if $X$ is a geodesic space and the $A$-action is by isometries, then $Z$ is a geodesic metric space, the $G$-action is by isometries, and the embedding  $X\hookrightarrow Z$ is isometric and convex.
\end{proposition}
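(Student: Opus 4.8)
The strategy is to transcribe the proof of Proposition~\ref{prop:xi-map}, replacing the $C$-pushout of two $G$-spaces by the $\varphi$-coalescence of a single one. I would begin by observing that all conclusions of Proposition~\ref{lem:HNN} apply to $Z$, since $Z$ is the $\varphi$-coalescence of the $A$-space $\hat X$ with respect to $(\hat x,\hat y)$ and, as $x$ and $y$ lie in distinct $A$-orbits, so do $\hat x\in\hat X^{H}$ and $\hat y\in\hat X^{\varphi(H)}$. In particular the composite $X\hookrightarrow\hat X\hookrightarrow G\times_{A}\hat X\xrightarrow{\rho}Z$ is a topological embedding, every point of $Z$ lies in the $G$-orbit of a point of $\hat X$, and $G_z=\varphi(H)$ for $z=\rho(\hat y)$. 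From now on I regard each translate $g.\hat X$ ($g\in G$) as a subspace of $Z$.

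Next I would produce the tree $T$. Take the graph-of-groups decomposition of $G$ obtained by subdividing the single loop of the HNN-decomposition $G=A\ast_\varphi$: this has two vertices, one carrying $A$ and one carrying $H$, joined by two edges each carrying $H$, with the two $A$-side edge monomorphisms equal to the inclusion $H\hookrightarrow A$ and to $\varphi\colon H\to A$ (and the two $H$-side edge monomorphisms the identity); its fundamental group is $G$. Let $T$ be the associated Bass--Serre tree. Then the vertex set of $T$ is the disjoint union of the $G$-orbit $G/A$ of a vertex $v_0$ with $G_{v_0}=A$ and the $G$-orbit of a vertex $w_1$ with $G_{w_1}=\varphi(H)$, every edge of $T$ joins a vertex of the first orbit to one of the second, and each vertex of the second orbit has exactly two neighbours, both in the first; thus $T$ is bipartite with parts these two orbits, the second being the orbit of what will be $\xi(z)$. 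Concretely one may take $w_1$ to be the midpoint of the edge of the non-subdivided Bass--Serre tree joining $v_0$ and $t.v_0$, so that $w_1$ is adjacent to both $v_0$ and $t.v_0$; then $w_2:=t^{-1}.w_1$ is adjacent to $v_0$ as well, with $G_{w_2}=H$ and $t.w_2=w_1$.

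Then I would build $\xi$ by the universal property of the coalescence, as in Proposition~\ref{prop:xi-map}. Define an $A$-map $f\colon\hat X\to T$ that sends $X$ to $v_0$, sends the spike-segment from $y$ to $\hat y$ homeomorphically onto the edge $[v_0,w_1]$ (so $f(\hat y)=w_1$), and sends the spike-segment from $x$ to $\hat x$ homeomorphically onto the edge $[v_0,w_2]$ (so $f(\hat x)=w_2$), the remaining spike-segments of $\hat X$ being carried along by $A$-equivariance; this is consistent because $A_{\hat y}=\varphi(H)=G_{w_1}$ and $A_{\hat x}=H=G_{w_2}$. By Proposition~\ref{lem:fromHsetstoGset}(3) the map $f$ extends to a continuous $G$-map $\tilde f\colon G\times_{A}\hat X\to T$, and since $\tilde f(gt.\hat x)=gt.w_2=g.w_1=\tilde f(g.\hat y)$ for every $g\in G$, the map $\tilde f$ is constant on the equivalence classes defining the coalescence and hence descends to a $G$-map $\xi\colon Z\to T$ with $\xi(z)=w_1$. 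The verification of items (1)--(5) is then mutatis mutandis the one for Proposition~\ref{prop:xi-map} (equivalently~\cite[Prop.~4.5(8)]{BiMa23}), with Proposition~\ref{lem:HNN} in the role of Proposition~\ref{lem:pushout}: items (2) and (3) are read off from the definition of $f$; for (1) one observes that the only points of $Z$ mapping to the orbit of $w_1$ are images of spike-points of the various $g.\hat X$, and among those exactly $z$ maps to $w_1$, using that $x$ and $y$ lie in distinct $A$-orbits and $G_{w_1}=\varphi(H)$; (4) is the bipartiteness noted above; and for (5), for a vertex $v$ not in the orbit of $\xi(z)$, one identifies $\xi^{-1}(\mathsf{Star}(v))$ with a translate $g.\hat X$ by tracing through the coalescence exactly as in the amalgamated case. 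The metric statement is then proved verbatim as in the last paragraph of the proof of Proposition~\ref{prop:xi-map}: one puts on $Z$ the path metric induced by the evident local metric (each $g.\hat X$ isometrically embedded, with a small neighbourhood of each point of the orbit of $z$ equal to the union of the two spike-segments meeting there), and uses $\xi\colon Z\to T$ together with the fact that $T$ is a tree to show that a shortest path of $Z$ meets each $g.\hat X$ in a geodesic of that subspace or in nothing; hence shortest paths exist and are unique, $Z$ is geodesic, and each $g.\hat X$, in particular $X$, is an isometrically embedded convex subspace.

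The only input genuinely beyond Proposition~\ref{prop:xi-map} is the bookkeeping in the second and third paragraphs above: identifying the subdivided HNN-splitting whose Bass--Serre tree serves as $T$ (so that the new vertex stabilizer $\varphi(H)$ matches $G_z$), and checking the single compatibility $t.f(\hat x)=f(\hat y)$ that allows $\tilde f$ to pass to the coalescence quotient. I expect this to be the main point, though it is routine; the metric argument needs no new idea, as $T$ is again a tree.
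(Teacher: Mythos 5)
Your proposal is correct and follows essentially the same route as the paper: the paper also takes $T$ to be the barycentric subdivision of the Bass--Serre tree of $G\ast_\varphi$ (your ``subdivided HNN'' graph of groups), with $V(T)=G/A\sqcup G/H$, defines an $A$-map sending $X$ to the vertex $A$, $\hat x\mapsto H$, $\hat y\mapsto tH$ (your $v_0,w_2,w_1$), extends it $G$-equivariantly to $G\times_A\hat X$, and passes to the coalescence quotient, deferring the remaining verifications and the metric statement to Propositions~\ref{lem:HNN} and~\ref{prop:xi-map}. The only addition you make is spelling out the compatibility check $\tilde f(gt.\hat x)=\tilde f(g.\hat y)$, which the paper leaves implicit.
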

\begin{proof}
Below we describe the definition of the tree $T$ and the $G$-map $\xi$. 
The argument proving the items of the proposition is essentially~\cite[Proof of Prop. 5.5(8)]{BiMa23}, and the proof of the moreover part is analogous to the  corresponding argument in the proof of Proposition~\ref{prop:xi-map}. 

Let $T$ be the barycentric subdivision of the Bass-Serre tree of $G\ast_{\varphi}$. Observe that  
\[ V(T) = G/A \sqcup  G/H \]
and 
\[ E(T) = \{ \{ gA, gtH \} \mid g\in G \} \sqcup \{ \{gA, gH\} \mid g\in G \}.\] 
Then there is an induced  $G$-equivariant map \[\psi\colon G\times_A   \spike(X,H,\varphi(H),x,y)   \to T\] 
such that 
all $X$ is mapped to the vertex $A$, $\hat x \mapsto H$, $\hat y \mapsto tH$,   the interval between $x$ and $\hat x$ is mapped to the open edge $\{A, H\}$, and the interval between $y$ and $\hat y$ is mapped to the edge $\{A, tH\}$. Now the map $\psi$ induces a  $G$-equivariant morphism of graphs  $\xi \colon   Z\to T$ such that   
\[ \begin{tikzcd} 
 & G\times_A   X \arrow["\psi"]{dr} \arrow[swap,"\rho"]{dl}&  \\
  Z \ar[dashed, "\xi"]{rr}   & & T. 
\end{tikzcd}
\]
is a commutative diagram.
\end{proof}

\begin{proof}[Proof of Theorem~\ref{thm:HNNFine}]
The argument is the same as the one used to prove Theorem~\ref{thm:CombinationFine}, the only difference is the use of  Proposition~\ref{lem:HNN} instead of Proposition~\ref{lem:pushout}.
\end{proof}

\section{Combination theorems for bicombings}


The main result of this section constructs a bicombing on a geodesic metric space $(Z,d_{\mathsf{Z}})$ from a tree of spaces $\xi\colon Z\to T$ and a bicombing on each vertex space $X_k$.  The resulting bicombing can be assumed to be equivariant under natural assumptions defined below.  
 
Let $\xi\colon Z \to T$ be a $G$-tree of spaces and suppose that for each $k\in K$ there is a bicombing $\Gamma_k$ on $X_k$. We say that \emph{the family $\{\Gamma_k\}_{k\in K}$ is $G$-equivariant} if 
for any $h \in G$, $k \in K$, and $x,y \in X_k$,
    \begin{align*}
    h \cdot {\Gamma}_{k}(x,y)(t)= {\Gamma}_{hk}(hx,hy)(t)
    \end{align*}
 holds for $t \in [0,1]$.

\begin{theorem}\label{TS-Main_thm}
Let $Z$ be a tree of spaces associated with a map $\xi\colon Z\to T$, where $V(T)=K \sqcup L$. 
Suppose that each $X_k=\xi^{-1}(\starNb(k))$ admits a bicombing ${\Gamma}_k \colon X_k \times X_k \times [0,1] \to X_k$.
There is a bicombing $\Gamma \colon Z  \times Z \times [0,1] \to Z$ on $Z$ satisfying the following properties.

\begin{enumerate}
\item \label{TS-Main1}
The  restriction of $\Gamma$ to 
$X_k \times X_k \times [0,1]$ coincides with ${\Gamma}_k$.

\item \label{TS-main-qg}
If ${\Gamma}_k$ is $(\lambda,c)$-quasi-geodesic bicombing for every $k\in K$ 
then $\Gamma$ is $(\lambda,2c)$-quasi-geodesic bicombing.
In particular, if ${\Gamma}_k$ is a geodesic bicombing for every $k\in K$,
then $\Gamma$ is a geodesic bicombing.

\item \label{TS-main-consist}
If ${\Gamma}_k$ is consistent geodesic bicombing for every $k\in K$ 
then $\Gamma$ is consistent geodesic bicombing.

\item \label{TS-main-bdd-bcomb}
Let $\lambda, c_1\geq1$ and $k, c_2\geq0$. If each ${\Gamma}_k$ is a $(\lambda,k,c_1,c_2)$-bounded bicombing, 
then $\Gamma$ is a $(\lambda,2k,\lambda c_1,2k+c_1+c_2)$-bounded bicombing. 

\item \label{TS-main-gcc}
Let $E\geq1$ and $C>0$. If each ${\Gamma}_k$ is a $(E,C)$-geodesic coarsely convex bicombing, then $\Gamma$ is a $(E+2,6C)$-geodesic coarsely convex bicombing.
\end{enumerate}

Moreover, suppose $\xi: Z \to T$ is a $G$-tree of spaces.
\begin{enumerate}
\setcounter{enumi}{5}
\item \label{TS-main-G-equiv}
If $\{{\Gamma}_k\}_{k\in K}$ is $G$-equivariant
then the bicombing $\Gamma$ is $G$-equivariant
. 

\end{enumerate}
\end{theorem}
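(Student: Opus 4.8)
The plan is to comb a pair of points $p,q\in Z$ by using the tree $T$ to cut the would-be path into segments lying in single vertex spaces, combing each segment with the given $\Gamma_k$, and concatenating. Precisely: if $p=q$ let $\Gamma(p,q)$ be the constant path; otherwise let $k_0,\dots,k_m$ be the $K$-vertices occurring, in order, along the $T$-geodesic from $\xi(p)$ to $\xi(q)$ (they alternate with $L$-vertices since $T$ is bipartite). For $1\leq j\leq m$ the unique $L$-vertex $l_j$ between $k_{j-1}$ and $k_j$ satisfies $X_{k_{j-1}}\cap X_{k_j}=\xi^{-1}(l_j)=\{w_j\}$; set $w_0=p$, $w_{m+1}=q$, $d_j=d_Z(w_j,w_{j+1})$, $D=\sum_j d_j$, and on the interval $\big[\tfrac1D\sum_{i<j}d_i,\ \tfrac1D\sum_{i\leq j}d_i\big]$ define $\Gamma(p,q)$ to be the affine reparametrization of $\Gamma_{k_j}(w_j,w_{j+1})$. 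The first ingredient I would establish is metric: since each $X_k=\xi^{-1}(\starNb(k))$ is a convex subspace and $T$ is a tree, the points $w_1,\dots,w_m$ separate $Z$, every $Z$-geodesic from $p$ to $q$ meets each $w_j$ exactly once and in order, $d_Z$ restricts to the intrinsic metric of each $X_k$, and $d_Z(p,q)=\sum_j d_{X_{k_j}}(w_j,w_{j+1})=D$. Item~(\ref{TS-Main1}) is then immediate: if $p,q\in X_k$ the $T$-geodesic stays in $\starNb(k)$, so $m=0$ and $\Gamma(p,q)=\Gamma_k(p,q)$. Item~(\ref{TS-main-qg}) follows from a triangle-inequality estimate: for $s<t$, metric additivity writes $d_Z(\Gamma(p,q)(s),\Gamma(p,q)(t))$ as a sum over the pieces met, the two end-pieces estimated by the quasigeodesic inequality of the relevant $\Gamma_{k_j}$ and the interior pieces exactly; the interior coefficient $1\leq\lambda$ is absorbed and each of the two end-pieces contributes one copy of the additive constant, giving $(\lambda,2c)$; in particular for geodesic $\Gamma_k$ one reads off $d_Z(\Gamma(p,q)(s),\Gamma(p,q)(t))=|s-t|D$.

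For item~(\ref{TS-main-consist}), note that when the $\Gamma_k$ are geodesic so is $\Gamma$, whence $d_Z(p,\Gamma(p,q)(a))=a\,d_Z(p,q)$ for all $a$. Writing $z=\Gamma(p,q)(a)$, the $T$-geodesic from $\xi(p)$ to $\xi(z)$ is the initial part of the one from $\xi(p)$ to $\xi(q)$, so $\Gamma(p,z)$ keeps the first pieces of $\Gamma(p,q)$ unchanged and replaces the last piece $\Gamma_{k_j}(w_j,w_{j+1})$ by $\Gamma_{k_j}(w_j,z)$; the identity $d_Z(p,z)=a\,d_Z(p,q)$ makes the two proportional reparametrizations match, and consistency of $\Gamma_{k_j}$ identifies $\Gamma_{k_j}(w_j,z)$ with the appropriate rescaled initial subsegment of $\Gamma_{k_j}(w_j,w_{j+1})$, yielding $\Gamma(p,z)(c)=\Gamma(p,q)(ca)$; the same computation upgrades $K$-coarse consistency of the $\Gamma_k$ to $K$-coarse consistency of $\Gamma$. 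Item~(\ref{TS-main-G-equiv}) is formal: all the data defining $\Gamma(p,q)$ --- the $T$-geodesic, the vertices $k_j$, the cut points $w_j$, the distances $d_j$, the pieces $\Gamma_{k_j}(w_j,w_{j+1})$ --- is carried equivariantly by $G$ once $\xi$ is a $G$-map, $G$ acts on $Z$ by isometries, and $\{\Gamma_k\}$ is $G$-equivariant, so $h\cdot\Gamma(p,q)(t)=\Gamma(hp,hq)(t)$.

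For item~(\ref{TS-main-gcc}) I would apply Theorem~\ref{consistent-and-thin-cconvex}. First, a $(E,C)$-geodesic coarsely convex bicombing is automatically $C$-coarsely consistent and both $(E,C)$-coarsely forward convex and $(E,C)$-coarsely backward convex, each obtained from the defining inequality of Definition~\ref{gccbicombing} by specializing the endpoints appropriately; by the previous paragraph $\Gamma$ is then $C$-coarsely consistent. It remains to check that $\Gamma$ is coarsely forward thin and coarsely backward thin (Definitions~\ref{cftd}, \ref{cbtd}). Given $v,w_1,w_2$, the geodesics $\gamma_i=\Gamma(v,w_i)$ have $T$-shadows that coincide up to the branch point $\mathsf{m}$ of the tripod on $\xi(v),\xi(w_1),\xi(w_2)$; take $X=X_{k^\ast}$ to be the vertex space over the $K$-vertex $k^\ast$ nearest $\mathsf{m}$. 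Up to the cut point $w^\ast$ where they enter $X$ the paths $\gamma_1,\gamma_2$ traverse the same pieces through the same cut points (the two proportional reparametrizations differ only by an affine rescaling), inside $X$ they are $\Gamma_{k^\ast}(w^\ast,\cdot)$ with common initial point, and past $X$ they enter disjoint parts of $Z$: this is exactly the configuration of Definition~\ref{cftd}, with $\Gamma$ restricted to $X\times X$ equal to $\Gamma_{k^\ast}$ by item~(\ref{TS-Main1}), and the fellow-traveling and terminal-distance conditions verified from the metric additivity along cut points, the slack constant taken to be $0$ since the common portions coincide after rescaling. The symmetric argument gives coarse backward thinness. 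Theorem~\ref{consistent-and-thin-cconvex} then shows $\Gamma$ is geodesic coarsely convex; with slack constant $0$, Propositions~\ref{cft} and~\ref{cbt} give that $\Gamma$ is $(E+2,C)$-coarsely forward and backward convex, and Proposition~\ref{ccgccc} with coarse-consistency constant $C$ then delivers $(E+2,6C)$.

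I expect item~(\ref{TS-main-bdd-bcomb}), the bounded-bicombing case, to be the main obstacle, since there the two combed paths $\Gamma(p_1,q_1)$ and $\Gamma(p_2,q_2)$ need not share an initial segment and the comparison is genuinely two-sided. Set $M=\max\{d_Z(p_1,p_2),d_Z(q_1,q_2)\}$ and fix $t$. I would first locate the vertex spaces containing $r_i:=\Gamma(p_i,q_i)(t)$, using that cut points separate $Z$ to show that the entry/exit cut points (or the endpoints $p_i,q_i$ themselves) of these vertex spaces along the two paths are pairwise within $M$, and that the relevant partial lengths differ by $O(M)$. When $r_1,r_2$ lie in a common vertex space $X_k$, writing $r_i=\Gamma_k(u_i,v_i)(\sigma_i)$ with $u_i,v_i$ the surrounding cut points or endpoints, the bounded-bicombing inequality for $\Gamma_k$ absorbs the change of endpoints ($d(u_1,u_2),d(v_1,v_2)\leq M$) and the quasigeodesic inequality for $\Gamma_k$ absorbs the parameter mismatch $|\sigma_1-\sigma_2|$, the latter producing the factor $\lambda$; when $r_1,r_2$ lie in distinct vertex spaces these are nested along $T$, and inserting an intermediate cut point reduces to the previous case. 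The delicate part --- and what I expect to be the most technical step of the whole proof --- is performing this bookkeeping so that the multiplicative constant comes out exactly $\lambda c_1$ and the additive constant exactly $2k+c_1+c_2$; the proportional-to-length reparametrization (forced already by the geodesic case of~(\ref{TS-main-qg})) is presumably what makes these precise constants available.
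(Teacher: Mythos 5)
Your construction of $\Gamma$ is exactly the paper's: concatenate the vertex-space bicombings at the cut points $\xi^{-1}(\ell_j)$ along the $T$-geodesic, with proportional-to-length reparametrization, and items~(\ref{TS-Main1}), (\ref{TS-main-qg}), (\ref{TS-main-consist}) and~(\ref{TS-main-G-equiv}) are handled as the paper handles them. For item~(\ref{TS-main-gcc}) you take the same route through Theorem~\ref{consistent-and-thin-cconvex}: verify coarse consistency, coarse forward thinness, and coarse backward thinness of $\Gamma$ with slack $D=0$ and invoke Propositions~\ref{cft}, \ref{cbt} and~\ref{ccgccc}, arriving at the same constants $(E+2,6C)$. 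The only substantive variation is where you get coarse consistency of $\Gamma$: you first observe that an $(E,C)$-gcc bicombing is automatically $C$-coarsely consistent (correct --- it follows from the convexity inequality specialized to $x_1=x_2$ and coinciding $y_1',y_2'$) and then propagate coarse consistency of the $\Gamma_k$ to $\Gamma$ via the same reparametrization identity you used for item~(\ref{TS-main-consist}); the paper's Lemma~\ref{TS-final} instead reproves this directly with the gcc inequality, but the two arguments are the same computation packaged differently, so this is cosmetic. For item~(\ref{TS-main-bdd-bcomb}) you correctly flag the genuine two-sided difficulty and sketch a plausible bookkeeping strategy; the paper does not prove this item at all but simply cites Alonso--Bridson~\cite{AB95}, so the comparison is moot --- your sketch, if carried out, would be extra content, not a deviation.
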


Let us remark that 
item~\eqref{TS-main-bdd-bcomb} in the above theorem was proved by Alonso and Bridson~\cite{AB95}. From here on we assume the hypothesis of Theorem~\ref{TS-Main_thm}, and we prove the result in the following subsections.

\subsection{Construction of a bicombing on a tree of spaces}
In this subsection, we define the bicombing of the space $Z$ from the bicombings on the vertex spaces. Specifically, a map $\Gamma\colon Z\times Z \times [0,1] \to Z$.  The proofs that the bicombing on $Z$ inherits properties of the bicomings on the vertex spaces are the content of the following subsections.   
 
 Let $v,w \in Z$. 
Let $[\xi(v), \xi(w)]$ be the geodesic segment on $T$ connecting $\xi(v)$ and $\xi(w)$, and let $\xi(v), x_1, \dots, x_{n-1}, \xi(w)$ be a sequence of vertices in this geodesic path.  
Since $T$ is a bipartite tree,   if $x_i \in K$ then $x_{i+1} \in L$, and analogously,  if $x_i \in L$, then $x_{i+1} \in K$.   
Let $\ell_1, \ell_2, \dots, \ell_m$ be the sequence of vertices of $T$ in $L$ defined by removing the vertices in $K$ of the sequence, let $v_i= \xi^{-1}(\ell_i)$, and let $k_i\in K$ be the vertex such that $v_i, v_{i+1} \in X_{k_i}$.
Let $v_0=v$ and $v_{m+1}=w$. 
Define $\Gamma(v,w)$ to be the path obtained by concatenating the paths ${\Gamma}_{k_i}(v_i,v_{i+1})$ at $\{v_i\}_{i=1}^{m-1}$, 
that is,
\begin{align}
\label{eq:GammaUV}
\Gamma(v,w)(t)\coloneqq 
{\Gamma}_{k_{i}}(v_i,v_{i+1})\left( \left( t-\dfrac{t_i}{t_{m+1}}\right ) \cdot \dfrac{t_{m+1}}{t_{i+1}-t_i} \right) 
\ 
\text{for}
\ \dfrac{t_i}{t_{m+1}} \leq t \leq \dfrac{t_{i+1}}{t_{m+1}}.
\end{align}
where $t_0=0$ and $t_i=\displaystyle\sum_{l=0}^{i-1}d_{k_l}(v_l,v_{l+1})$.

\subsection{Combination of quasi-geodesic bicombings}
\label{TS-bicombing_on_Z}
In this subsection, we prove part \eqref{TS-main-qg} of Theorem \ref{TS-Main_thm}.
For $v,w \in Z$, consider the path $\Gamma(v,w)$ as defined in the previous section. Let $0\leq t<s\leq 1$.  We will show that 
\begin{align}
\label{lcqgeod}
d_{\mathsf{Z}}(\Gamma(v,w)(t), \Gamma(v,w)(s))
\leq \lambda (s-t)d_{\mathsf{Z}}(v,w)+ 2c
\end{align}
Suppose  that $\dfrac{t_i}{t_{m+1}} \leq t \leq \dfrac{t_{i+1}}{t_{m+1}}$ and $\dfrac{t_j}{t_{m+1}} \leq s \leq \dfrac{t_{j+1}}{t_{m+1}}$. 
If $i=j$, then~\eqref{lcqgeod} holds since ${\Gamma}_{k_i}$
is $(\lambda,c)$-quasi-geodesic.
Now let us consider the case  $j>i+1$, and observe
\begin{align*}
&\hspace{-5mm}d_{\mathsf{Z}}(\Gamma(v,w)(t), \Gamma(v,w)(s)) \\
=\ &
d_{\mathsf{Z}}
\left(
{\Gamma}_{k_{i}}(v_i,v_{i+1})\left( \left( t-\dfrac{t_i}{t_{m+1}}\right ) \cdot \dfrac{t_{m+1}}{t_{i+1}-t_i} \right),
{\Gamma}_{k_{j}}(v_j,v_{j+1})\left( \left( s-\dfrac{t_j}{t_{m+1}}\right ) \cdot \dfrac{t_{m+1}}{t_{j+1}-t_j} \right) 
\right)   \\
=\ &
d_{\mathsf{Z}}
\left(
{\Gamma}_{k_{i}}(v_i,v_{i+1})\left( \left( t-\dfrac{t_i}{t_{m+1}}\right ) \cdot \dfrac{t_{m+1}}{t_{i+1}-t_i} \right),
{\Gamma}_{k_{i}}(v_i,v_{i+1})(1)
\right)\\
&+
\sum_{l=i}^{j-2}
d_{\mathsf{Z}}
\left(
{\Gamma}_{k_{l+1}}(v_{l+1},v_{l+2})(0),
{\Gamma}_{k_{l+1}}(v_{l+1},v_{l+2})(1)
\right) \\
&+
d_{\mathsf{Z}}
\left(
{\Gamma}_{k_{j}}(v_j,v_{j+1})(0),
{\Gamma}_{k_{j}}(v_j,v_{j+1})\left( \left( s-\dfrac{t_j}{t_{m+1}}\right ) \cdot \dfrac{t_{m+1}}{t_{j+1}-t_j} \right)
\right) \\
\leq\ &
\lambda \left| 1-\left( t-\dfrac{t_i}{t_{m+1}}\right ) \cdot \dfrac{t_{m+1}}{t_{i+1}-t_i} \right| d_{\mathsf{Z}}(v_i,v_{i+1}) +c
+\sum_{l=i}^{j-2} d_{\mathsf{Z}}(v_{l+1},v_{l+2}) \\
&+ 
\lambda \left| \left( s-\dfrac{t_j}{t_{m+1}}\right ) \cdot \dfrac{t_{m+1}}{t_{j+1}-t_j} \right| d_{\mathsf{Z}}(v_j,v_{j+1}) +c \\ 
=\ &
\lambda \left( \dfrac{t_{i+1}-tt_{m+1}}{t_{i+1}-t_i} \right) (t_{i+1}-t_i) + c 
+\sum_{l=i}^{j-2} (t_{l+2}-t_{l+1}) 
+\lambda \left( \dfrac{st_{m+1}-t_j}{t_{j+1}-t_j} \right) (t_{j+1}-t_j) + c \\
\leq\ &
\lambda (t_{i+1}-tt_{m+1})+ c 
+\sum_{l=i}^{j-2} \lambda(t_{l+2}-t_{l+1}) 
+ \lambda (st_{m+1}-t_j) + c \\
=\ &
\lambda (s-t)t_{m+1}+ 2c \\
=\ &
\lambda (s-t)d_{\mathsf{Z}}(v,w)+ 2c
\end{align*}
holds. The case that $j=i+1$ follows from the above inequalities by interpreting the sums of the form $\displaystyle\sum_{l=i}^{j-2}$ as zeros.  By a similar argument, we also have
\begin{align*}
\lambda^{-1} (s-t)d_{\mathsf{Z}}(v,w)- 2c \leq d_{\mathsf{Z}}(\Gamma(v,w)(t),\Gamma(v,w)(s)) .
\end{align*}
Therefore, $\Gamma(v,w)$ is a $(\lambda,2c)$-quasi-geodesic bicombing.

\subsection{Combination of consistent geodesic bicombings}
In this section we prove part \eqref{TS-main-consist} of Theorem~\ref{TS-Main_thm}. Assume $\Gamma_k$ is a consistent geodesic bicombing for every $k$ and, by 
Theorem \ref{TS-Main_thm}(\ref{TS-main-qg}), that $\Gamma$ is a geodesic bicombing.

Let $v,w \in Z$
and let $w'=\Gamma(v,w)(t)$ for some $t \in [0,1]$.
The geodesic $[\xi(v), \xi(w')]$ is an initial segment of the geodesic $[\xi(v), \xi(w)]$ in the tree $T$. Therefore, the sequence of vertices $\ell_1,\ldots , \ell_m$ in $L$ of $[\xi(v), \xi(w')]$ is an initial subsequence of the sequence of vertices $\ell_1,\ldots ,\ell_m,\ldots \ell_n$ in $L$ of 
$[\xi(v), \xi(w')]$. As in the definition of $\Gamma$, let $v_i=\xi^{-1}(\ell_i)$ and $v_0=v$. Then $\Gamma(v,w')$ is the concatenation of the paths $\Gamma(v_0,v_1), \ldots, \Gamma(v_{m-1}, v_m)$ and $\Gamma(v_m,w')$, and analogously, $\Gamma(v,w)$ is the concatenation of $\Gamma(v_0,v_1), \ldots, \Gamma(v_{n-1}, v_n)$ and $\Gamma(v_n,w)$.  Let $k$ such that $w'\in X_k$. Let us assume that $m<n$, the case $m=n$ is analogous.  Since $\Gamma_k$ is consistent, we have that $\Gamma(v_m,w')=\Gamma_k(v_m,w')$ is an initial segment of $\Gamma(v_m, v_{m+1})=\Gamma_k(v_m,v_{m+1})$. Therefore, $\Gamma(v,w')$ is an initial subsegment of $\Gamma(v,w)$.

\subsection{Combination of geodesic coarsely convex bicombing}
In this subsection we prove part \eqref{TS-main-gcc} of Theorem~\ref{TS-Main_thm}. We show that $\Gamma$ is geodesic coarsely convex by invoking Theorem~\ref{consistent-and-thin-cconvex} which has three assumptions that are verified in the following three lemmas.

\begin{lemma}
\label{TS-final}
We suppose that each ${\Gamma}_k$ is a geodesic $(E,C)$-coarsely convex bicombing.
Then $\Gamma$ is $C$-coarsely consistent.  
\end{lemma}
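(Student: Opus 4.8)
The plan is to first isolate a single-vertex-space fact and then propagate it through the concatenation defining $\Gamma$. The single-vertex fact is: every geodesic $(E,C)$-coarsely convex bicombing $\delta$ on a space $Y$ is automatically $C$-coarsely consistent. Indeed, given $x,y\in Y$ and $a\in[0,1]$, set $z=\delta(x,y)(a)$ and apply Definition~\ref{gccbicombing} with $(x_1,x_2,y_1,y_2,a,b)=(x,x,y,z,a,1)$: then $y_1'=\delta(x,y)(a)=z$ and $y_2'=\delta(x,z)(1)=z$, so for every $c\in[0,1]$,
\[
 d_Y(\delta(x,y)(ca),\delta(x,z)(c)) \le (1-c)E\,d_Y(x,x) + cE\,d_Y(z,z) + C = C.
\]
In particular each $\Gamma_k$ is $C$-coarsely consistent.

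Now fix $v,w\in Z$, $a\in[0,1]$, and put $w'=\Gamma(v,w)(a)$. Adopt the notation of the construction~\eqref{eq:GammaUV}: let $v=v_0,v_1,\dots$ be the points $\xi^{-1}(\ell_i)$ coming from the $L$-vertices of $[\xi(v),\xi(w)]$ in $T$, and let $k_i\in K$ be the vertex with $v_i,v_{i+1}\in X_{k_i}$, so that $\Gamma(v,w)$ is the arc-length-proportional concatenation of the geodesics $\Gamma_{k_i}(v_i,v_{i+1})$; say $w'=\Gamma_{k_m}(v_m,v_{m+1})(\alpha)$ for the appropriate index $m$ and parameter $\alpha\in[0,1]$. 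Exactly as in the proof of part~(\ref{TS-main-consist}), the geodesic $[\xi(v),\xi(w')]$ in $T$ is an initial segment of $[\xi(v),\xi(w)]$, so $\Gamma(v,w')$ is the arc-length-proportional concatenation of $\Gamma_{k_0}(v_0,v_1),\dots,\Gamma_{k_{m-1}}(v_{m-1},v_m),\Gamma_{k_m}(v_m,w')$, while the reparametrized path $c\mapsto\Gamma(v,w)(ca)$ traverses $\Gamma_{k_0}(v_0,v_1),\dots,\Gamma_{k_{m-1}}(v_{m-1},v_m)$ and then the subgeodesic of $\Gamma_{k_m}(v_m,v_{m+1})$ from $v_m$ to $w'$. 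Since $\Gamma$ is a geodesic bicombing by Theorem~\ref{TS-Main_thm}(\ref{TS-main-qg}) and $d_Z(v,w')=a\,d_Z(v,w)$, for each $c\in[0,1]$ the points $\Gamma(v,w')(c)$ and $\Gamma(v,w)(ca)$ both lie at arc length $c\,d_Z(v,w')$ from $v$ along their respective paths.

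Let $L_m=\sum_{i=0}^{m-1}d_{k_i}(v_i,v_{i+1})$ be the length of the common initial part. If $c\,d_Z(v,w')\le L_m$, then $\Gamma(v,w')(c)$ and $\Gamma(v,w)(ca)$ lie on this common initial concatenation at the same arc-length parameter, hence coincide. If $c\,d_Z(v,w')> L_m$, then both lie in $X_{k_m}$, at arc length $c\,d_Z(v,w')-L_m$ from $v_m$; unwinding the two reparametrizations gives $\Gamma(v,w')(c)=\Gamma_{k_m}(v_m,w')(c')$ and $\Gamma(v,w)(ca)=\Gamma_{k_m}(v_m,v_{m+1})(c'\alpha)$ for one and the same $c'\in(0,1]$, and, since $X_{k_m}$ is a convex subspace of $Z$, the $C$-coarse consistency of $\Gamma_{k_m}$ yields $d_Z(\Gamma(v,w')(c),\Gamma(v,w)(ca))\le C$. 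The degenerate situations ($v=w$, or $w'$ equal to some $v_j$, or a trivial last segment) are absorbed by this bookkeeping with $\alpha\in\{0,1\}$ and contribute distance $0$. Hence $d_Z(\Gamma(v,w')(c),\Gamma(v,w)(ca))\le C$ for all $c\in[0,1]$, that is, $\Gamma$ is $C$-coarsely consistent. The only delicate point is the arc-length accounting that matches the two reparametrizations on the last vertex space; everything else is formal.
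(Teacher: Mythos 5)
Your proof is correct and follows essentially the same approach as the paper: both reduce to the last vertex space and apply the geodesic $(E,C)$-coarsely convex inequality with the degenerate substitution $(x_1,x_2,y_1,y_2,a,b)=(x,x,y,z,a,1)$, which forces the $E$-terms to vanish and leaves only $C$. You merely package this as a standalone ``single-vertex fact'' and are a bit more explicit about the arc-length bookkeeping and edge cases than the paper is, but the decomposition into the common initial segment plus the final vertex space, and the key inequality, are the same.
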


\begin{lemma}
\label{tscft}
If each ${\Gamma}_k$ is a geodesic $(E,C)$-coarsely convex bicombing,
then $\Gamma$ is coarsely forward thin.
\end{lemma}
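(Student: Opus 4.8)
The plan is to prove that $\Gamma$ is coarsely $(C,0,E)$-forward thin, where $E$ and $C$ are the constants for which every $\Gamma_k$ is a geodesic $(E,C)$-coarsely convex bicombing. By Theorem~\ref{TS-Main_thm}(\ref{TS-main-qg}), $\Gamma$ is a geodesic bicombing, so $\gamma_i=\Gamma(v,w_i)$ is a $Z$-geodesic; by the construction \eqref{eq:GammaUV}, $\gamma_i$ passes through the vertex spaces along the tree geodesic $[\xi(v),\xi(w_i)]$ in order. Let $\mathfrak p$ be the last vertex common to $[\xi(v),\xi(w_1)]$ and $[\xi(v),\xi(w_2)]$; note $\mathfrak p$ lies on $[\xi(w_1),\xi(w_2)]$. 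First I would record that $\gamma_1$ and $\gamma_2$ agree along a common initial subpath running from $v$ to a point $p$, where $p=\xi^{-1}(\ell)$ for $\ell$ the last $L$-vertex preceding $\mathfrak p$ on these geodesics (or $p=v$ when there is no such $\ell$), and that past $p$ the two geodesics enter vertex spaces and diverge.

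Next I would exhibit the convex subspace $X$ required by Definition~\ref{cftd}. If $\mathfrak p=k^{*}\in K$, set $X=X_{k^{*}}$; then $\gamma_1,\gamma_2$ enter $X$ at the common point $p_1=p_2=p$, and $\gamma_1\cap X=\Gamma_{k^{*}}(p,a)$, $\gamma_2\cap X=\Gamma_{k^{*}}(p,b)$, where $a=\gamma_1(c_1)$ and $b=\gamma_2(c_1')$ are the points at which the geodesics leave $X$, with the convention $a=w_1$ if $w_1\in X$ and $b=w_2$ if $w_2\in X$. If $\mathfrak p=\ell^{*}\in L$, then $\gamma_1,\gamma_2$ both pass through $q=\xi^{-1}(\ell^{*})$ and then enter distinct vertex spaces $X_{k_1}\neq X_{k_2}$; set $X=X_{k_1}$, so $p_1=q$, $a$ is the exit point of $\gamma_1$ from $X$, while $\gamma_2\cap X=\{q\}$ and hence $p_2=b=q$. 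In both cases $X$ is a convex subspace of $Z$ by the definition of a tree of spaces, and by Theorem~\ref{TS-Main_thm}(\ref{TS-Main1}) the restriction $\Gamma|_{X\times X}$ is the geodesic $(E,C)$-coarsely convex bicombing $\Gamma_{k^{*}}$; the same fact gives Condition~\eqref{Ex-3-4} of Definition~\ref{cftd}. Condition~\eqref{Ex-3-2} holds with $D=0$: both $\gamma_1|_{[0,c_0]}$ and $\gamma_2|_{[0,c_0']}$ are constant-speed parametrizations of the common initial subpath, so their reparametrizations to $[0,1]$ literally coincide.

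The two remaining conditions both reduce to one observation: a path in $Z$ from $u_1$ to $u_2$ projects under the continuous map $\xi$ to a path of $T$ whose image contains $[\xi(u_1),\xi(u_2)]$, hence it meets the singleton fiber $\xi^{-1}(\ell)$ for every $L$-vertex $\ell$ on that tree geodesic; applied to a $Z$-geodesic, this yields $d_Z(u_1,u_2)=d_Z(u_1,q)+d_Z(q,u_2)$ whenever $\xi^{-1}(\ell)=\{q\}$ with $\ell\in[\xi(u_1),\xi(u_2)]$. Since $\mathfrak p$ lies on $[\xi(w_1),\xi(w_2)]$, the $L$-vertices $\xi(a)$ and $\xi(b)$ appear on this geodesic in that order (with $\xi(a)$ toward $w_1$), so applying the observation twice along a $Z$-geodesic from $w_1$ to $w_2$ gives
\[
d_Z(w_1,w_2)=d_Z(w_1,a)+d_Z(a,b)+d_Z(b,w_2),
\]
which is Condition~\eqref{Ex-3-5} with $D=0$. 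For Condition~\eqref{Ex-3-3}, since $a=\gamma_1(c_1)$ and $b=\gamma_2(c_1')$ lie on the geodesics $\gamma_1,\gamma_2$ issuing from $v$, we have $d_Z(w_1,a)=(1-c_1)d_Z(v,w_1)$ and $d_Z(b,w_2)=(1-c_1')d_Z(v,w_2)$; substituting into the displayed identity and writing $M=\max\{c_1,c_1'\}$, the estimate $(1-M)d_Z(w_1,w_2)\le(1-M)\bigl(d_Z(v,w_1)+d_Z(v,w_2)\bigr)\le(1-c_1)d_Z(v,w_1)+(1-c_1')d_Z(v,w_2)$ rearranges to $d_Z(a,b)\le M\,d_Z(w_1,w_2)$, which is Condition~\eqref{Ex-3-3} with $D=0$.

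The step I expect to demand the most care is the bookkeeping of degenerate configurations rather than any inequality: whether $\mathfrak p$ is in $K$ or in $L$; whether $\gamma_1$ or $\gamma_2$ actually leaves $X$ (if not, $a=w_1$ or $b=w_2$ and the corresponding $c_i$ equals $1$); and whether the common initial subpath collapses to $\{v\}$ (for instance when $\xi(v)$ is already $\mathfrak p$ or the adjacent $L$-vertex, so $p_1=p_2=v$). In every case the three verifications above go through verbatim, since the additivity identity and the relations $d_Z(w_i,\cdot)=(1-c_i)d_Z(v,w_i)$ persist and Condition~\eqref{Ex-3-2} is vacuous once the common subpath is trivial. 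Hence $\Gamma$ is coarsely $(C,0,E)$-forward thin, with constants inherited unchanged from the vertex bicombings.
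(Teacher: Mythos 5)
Your proof is correct and follows essentially the same strategy as the paper: pass to the tripod in $T$, identify its branch point $\mathfrak p$ (the paper's $m$), take $X$ to be the vertex space $X_{\mathfrak p}$ when $\mathfrak p\in K$, observe that the two $Z$-geodesics share a common initial subpath into $X$ and satisfy exact parametric consistency in $X$, and exploit the fact that $L$-vertex fibers are cut points of $Z$ to get an exact additivity of distances along $[w_1,w_2]$. Where you differ is in the bookkeeping: the paper's proof never mentions condition~\eqref{Ex-3-5} at all, yet its chain of inequalities for~\eqref{Ex-3-3} starts from an identity that is precisely~\eqref{Ex-3-5} with $D=0$ in disguise (the first equality $c_1'd_Z(w_1,w_2)-d_Z(a,b)=c_1'd_Z(w_1,a)+c_1'd_Z(b,w_2)-(1-c_1')d_Z(a,b)$ requires the additivity). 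You isolate the additivity as~\eqref{Ex-3-5} first and then derive~\eqref{Ex-3-3} from it by a short subtraction argument; this is cleaner, makes the verification complete rather than implicit, and produces $D=0$ explicitly. You also spell out the degenerate case $\mathfrak p\in L$ (choosing $X=X_{k_1}$ so that $p_2=b=q$) that the paper dismisses as ``analogous''. Both routes are valid; yours buys a fully spelled-out verification of all four conditions of Definition~\ref{cftd} at the cost of slightly more case analysis.
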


\begin{lemma}
\label{tscbt}
If each ${\Gamma}_k$ is a geodesic $(E,C)$-coarsely convex bicombing,
then $\Gamma$ is coarsely backward thin.
\end{lemma}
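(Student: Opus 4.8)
The plan is to prove the sharper statement that $\Gamma$ is coarsely $(C,0,E)$-backward thin, with $E,C$ the constants of the vertex bicombings. Fix $v_1,v_2,w\in Z$ and put $\gamma_1=\Gamma(v_1,w)$, $\gamma_2=\Gamma(v_2,w)$. I will exhibit a vertex space $X=X_k$ — the one at which the two geodesics merge — that serves as the convex subspace in Definition~\ref{cbtd}: it is convex in $Z$ because $Z$ is a tree of spaces, and $\Gamma|_{X\times X}=\Gamma_k$ is a geodesic $(E,C)$-coarsely convex bicombing by hypothesis and Theorem~\ref{TS-Main_thm}(\ref{TS-Main1}). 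The whole argument mirrors that of Lemma~\ref{tscft}, with the roles of the two endpoints of the geodesics interchanged.

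Next I would extract the structure of $\Gamma$ from~\eqref{eq:GammaUV}. Under $\xi$, the paths $\gamma_1,\gamma_2$ run monotonically along the tree geodesics $[\xi(v_1),\xi(w)]$ and $[\xi(v_2),\xi(w)]$, which share a terminal segment ending at $\xi(w)$ because $T$ is a tree; let $X=X_k$ be the vertex space lying at the branch of this segment (with $k$ taken adjacent to the branch vertex, or equal to the vertex space containing $w$, in the various degenerate configurations). Writing $p_1=\gamma_1(c_0)$, $p_2=\gamma_2(c_0')$ for the initial points of $\gamma_1,\gamma_2$ in $X$ and $a=\gamma_1(c_1)$, $b=\gamma_2(c_1')$ for the terminal ones — these are well defined since each tree geodesic meets the star of $k$ in a single subsegment — I would record three facts: (i) $p_1$ and $p_2$ lie on the geodesic $\Gamma(v_1,v_2)$, in the order $v_1,p_1,p_2,v_2$, because $[\xi(v_1),\xi(v_2)]$ agrees with $[\xi(v_i),\xi(w)]$ up to the star of $k$; (ii) $a=b$, because both geodesics leave $X$ towards $w$ through the same $L$-vertex of the shared segment (and $a=b=w$ when $w\in X$); and (iii) by~\eqref{eq:GammaUV} the portion of $\gamma_i$ inside $X$ is precisely $\Gamma_k(p_i,a)$, which equals $\Gamma(p_i,a)$ since the tree geodesic from $\xi(p_i)$ to $\xi(a)$ stays in the star of $k$, while $\gamma_1|_{[c_1,1]}$ and $\gamma_2|_{[c_1',1]}$ are both reparametrizations of $\Gamma(a,w)$ and so have the same image and the same endpoints.

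Granting (i)--(iii), the four conditions of Definition~\ref{cbtd} fall out with $D=0$. Condition~(\ref{Ey-3-4}) is exactly (iii); condition~(\ref{Ey-3-2}) holds because by (ii)--(iii) the two terminal sub-paths coincide; and condition~(\ref{Ey-3-5}) holds with equality because (i) gives $d_Z(v_1,p_1)+d_Z(p_1,p_2)+d_Z(p_2,v_2)=d_Z(v_1,v_2)$. For condition~(\ref{Ey-3-3}), since $\Gamma$ is a geodesic bicombing we have $d_Z(v_1,p_1)=c_0\,d_Z(v_1,w)$ and $d_Z(p_2,v_2)=c_0'\,d_Z(v_2,w)$, so, assuming without loss of generality $c_0\le c_0'$, (i) and the triangle inequality $d_Z(v_1,v_2)-d_Z(v_1,w)\le d_Z(v_2,w)$ give
\begin{align*}
d_Z(p_1,p_2)-(1-c_0)\,d_Z(v_1,v_2)
&=c_0\bigl(d_Z(v_1,v_2)-d_Z(v_1,w)\bigr)-c_0'\,d_Z(v_2,w)\\
&\le (c_0-c_0')\,d_Z(v_2,w)\le 0,
\end{align*}
whence $d_Z(p_1,p_2)\le\max\{1-c_0,1-c_0'\}\,d_Z(v_1,v_2)$. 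Hence $\Gamma$ is coarsely backward thin.

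I expect the main obstacle to be not any single estimate but the tree of spaces bookkeeping needed for facts (i)--(iii): one must track the monotone projections $\xi\circ\gamma_i$ and the entry and exit points carefully, and in particular dispatch the degenerate configurations — when some $v_i$ already lies in $X$, when $w$ lies in $X$, when $p_1=p_2$, or when the branch of the shared segment falls at an $L$-vertex or at $\xi(w)$ — so that the choice of $X$ and the identifications in (ii) and (iii) are correct in each case. Once (i)--(iii) are established, the rest is the elementary computation above.
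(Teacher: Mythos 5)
Your proposal is correct and follows the same strategy the paper intends: the paper proves only the forward version (Lemma~\ref{tscft}) via the tripod decomposition in $T$ and remarks that the backward version ``can be proved in the same manner,'' and your argument is exactly the dual computation, exploiting that in the backward configuration the two combing geodesics have a common exit point $a=b$ (mirroring $p_1=p_2=p$ in the forward case) and that $p_1,p_2$ lie in order on $\Gamma(v_1,v_2)$. Your estimate for condition~(\ref{Ey-3-3}) is the correct analogue of the paper's inequality~\eqref{TS-key}, and your recognition that the remaining conditions hold with $D=0$ (modulo the degenerate configurations, which you correctly flag as bookkeeping) matches the paper's implicit use of $D=0$ in Lemma~\ref{tscft}.
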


\begin{proof}[Proof of Lemma~\ref{TS-final}]
Let $v,w \in Z$
and let $w'=\Gamma(v,w)(t)$ for some $t \in [0,1]$.
The geodesic $[\xi(v), \xi(w')]$ is an initial segment of the geodesic $[\xi(v), \xi(w)]$ in the tree $T$. Therefore, the sequence of vertices $\ell_1,\ldots , \ell_m$ in $L$ of $[\xi(v), \xi(w')]$ is an initial subsequence of the sequence of vertices $\ell_1,\ldots ,\ell_m,\ldots \ell_n$ in $L$ of 
$[\xi(v), \xi(w')]$. As in the definition of $\Gamma$, let $v_i=\xi^{-1}(\ell_i)$ and $v_0=v$. Then $\Gamma(v,w')$ is the concatenation of the paths $\Gamma(v_0,v_1), \ldots, \Gamma(v_{m-1}, v_m)$ and $\Gamma(v_m,w')$, and analogously, $\Gamma(v,w)$ is the concatenation of $\Gamma(v_0,v_1), \ldots, \Gamma(v_{n-1}, v_n)$ and $\Gamma(v_n,w)$.  Let $k$ such that $w'\in X_k$. Let us assume that $m<n$, the case $m=n$ is analogous.  

Let $c_1,c_2\in [0,1]$ such that $\Gamma_k(v_m,w')(c_1)=\Gamma(v,w')(c)$ and
$\Gamma_k(v_m,v_{m+1})(c_2)=w'$. We have
\begin{align*}
    d_{\mathsf{Z}}(v,\Gamma(v,w)(ct)) 
    &= d_{\mathsf{Z}}(v,v_m) + c_1d_{\mathsf{Z}}(v_m,w')\\
    &= d_{\mathsf{Z}}(v,v_m) + c_1 c_2 d_{\mathsf{Z}}(v_m,v_{m+1})
\end{align*}
So we have 
$\Gamma(v,w)(ct)={\Gamma}_{k}(v_m,v_{m+1})(c_1c_2)$.

Since ${\Gamma}_k$ is a $(E,C)$-geodesic coarsely convex bicombing, 
\begin{align}
d_{\mathsf{Z}}(\Gamma(v,w)(ct), \Gamma(v,w')(c)) 
&= d_{\mathsf{Z}}({\Gamma}_{k}(v_m,v_{m+1})(c_1c_2),{\Gamma}_{k}(v_m,w^{\prime})(c_1)) \notag \\
&\leq c_1Ed_{\mathsf{Z}}({\Gamma}_{k}(v_m,v_{m+1})(c_2),{\Gamma}_{k}(v_m,w^{\prime})(1))+C \notag \\
&=c_1Ed_{\mathsf{Z}}(w^{\prime},w^{\prime}) + C=C.  \notag
\end{align}
This completes the proof. 
\end{proof}

\begin{proof}[Proof of Lemma~\ref{tscft}]
For $v,w_1,w_2 \in Z$, 
let $\gamma_1=\Gamma(v,w_1)$ and $\gamma_2=\Gamma(v,w_2)$.
Suppose 
\begin{align*}
    \min\{d(v,w_1), d(v,w_2)\}\geq 1
\end{align*}
and $v,w_1,w_2$ are not on the same geodesic segment.
Let denote by $[\gamma_1(0),\gamma_1(1)]$, the image of $\gamma_1$.
We consider a geodesic triangle $[\gamma_1(0),\gamma_1(1)] \cup [\gamma_1(1),\gamma_2(1)] \cup [\gamma_2(1),\gamma_1(0)]$.
The geodesic triangle $\Delta(\xi(\gamma_1(0)), \xi(\gamma_1(1)), \xi(\gamma_2(1)))$ is a tripod. 
Let denote by $m \in T$, the center point of the tripod.
We divide the proof into the following cases: 
\begin{enumerate}
\renewcommand{\labelenumi}{\roman{enumi}).}
\item  $m \in L$.
\item  $m \in K$.
\end{enumerate}

The proof for case i) is analogous to that for case ii). We only consider  case ii). 
By construction, $\gamma_1$ and $\gamma_2$ can be expressed as follows;
There exist $p, a, b \in \xi^{-1}(\starNb(m))=X_m$ such that
\begin{itemize}
    \item $\gamma_1$ is the concatenation of $\Gamma(v,p)$, $\Gamma_m(p,a)$, and $\Gamma(a,w_1)$.
    \item $\gamma_2$ is the concatenation of $\Gamma(v,p)$, $\Gamma_m(p,b)$, and $\Gamma(b,w_1)$.
\end{itemize}
We set 
\begin{align*}
p=\gamma_1(c_0)=\gamma_2(c_0^{\prime}), \quad
a=\gamma_1(c_1), \quad 
b=\gamma_2(c_1^{\prime}),
\end{align*}
where $c_0, c_1, c_0^{\prime}, c_1^{\prime} \in [0,1]$.
\par
$X_m$ is a convex subspace of $Z$.
Note that $X_m$ admits a geodesic $(E,C)$-coarsely convex bicombing $\Gamma_m$.
$\Gamma$ restricted to $X_m \times X_m$ coincides with $\Gamma_m$. 
\par
The reparametrization of $\gamma_1|_{[0,c_0]}$ and $\gamma_2|_{[0,c_0']}$ coincide. 
Then (\ref{Ex-3-2}) of Definition~\ref{cftd} holds. 
\par
Thus, the image of $\gamma_1|_{[c_0,c_1]}$ coincides with that of $\Gamma(p_1,a)$
and the image of $\gamma_2|_{[c_0',c_1']}$ coincides with that of $\Gamma(p_2,b)$.
Item (\ref{Ex-3-4}) of Definition~\ref{cftd} holds.

\par
Set $t=d_{\mathsf{Z}}(\gamma_1(0),\gamma_1(1))$ and $s=d_{\mathsf{Z}}(\gamma_2(0),\gamma_2(1))$.
Since $\gamma_1$ and $\gamma_2$ are geodesic bicombings, we have $c_0t=c_0^{\prime}s$.
Without loss of generality, we may assume that $c_1 \leq c_1^{\prime}$.
Then, we have 
\begin{align*}
&c_1^{\prime} \cdot d_{\mathsf{Z}}(\gamma_1(1), \gamma_2(1)) - d_{\mathsf{Z}}(a,b) \\
&=c_1^{\prime}d_{\mathsf{Z}}(\gamma_1(1), \gamma_1(c_1)) + c_1^{\prime}d_{\mathsf{Z}}(\gamma_2(c_1^{\prime}), \gamma_2(1)) - (1-c_1^{\prime})d_{\mathsf{Z}}(a,b) \\
&=c_1^{\prime}(1-c_1)d_{\mathsf{Z}}(\gamma_1(1), \gamma_1(0)) + c_1^{\prime}(1-c_1^{\prime})d_{\mathsf{Z}}(\gamma_2(0), \gamma_2(1)) - (1-c_1^{\prime})d_{\mathsf{Z}}(a,b). 
\end{align*}
Moreover, by $c_1 \leq c_1^{\prime}$, the right-hand side of this equality can be estimated as follows:
\begin{align*}
& c_1^{\prime}(1-c_1)d_{\mathsf{Z}}(\gamma_1(1), \gamma_1(0)) + c_1^{\prime}(1-c_1^{\prime})d_{\mathsf{Z}}(\gamma_2(0), \gamma_2(1)) - (1-c_1^{\prime})d_{\mathsf{Z}}(a,b) \\
&\geq c_1^{\prime}(1-c_1^{\prime})d_{\mathsf{Z}}(\gamma_1(1), \gamma_1(0)) + c_1^{\prime}(1-c_1^{\prime})d_{\mathsf{Z}}(\gamma_2(0), \gamma_2(1)) - (1-c_1^{\prime})d_{\mathsf{Z}}(a,b) \\
&= (1-c_1^{\prime})\{ c_1^{\prime}d_{\mathsf{Z}}(\gamma_1(1), \gamma_1(0)) + c_1^{\prime}d_{\mathsf{Z}}(\gamma_2(0), \gamma_2(1)) - d_{\mathsf{Z}}(a,b)\} \\
&\geq (1-c_1^{\prime})\{ c_1d_{\mathsf{Z}}(\gamma_1(1), \gamma_1(0)) + c_1^{\prime}d_{\mathsf{Z}}(\gamma_2(0), \gamma_2(1)) - d_{\mathsf{Z}}(a,b)\} \\
&= (1-c_1^{\prime})\{ d_{\mathsf{Z}}(a, \gamma_1(0)) + d_{\mathsf{Z}}(\gamma_2(0),b) - d_{\mathsf{Z}}(a,b)\} \\
&\geq 0.
\end{align*}
The last inequality follows from the triangle inequality.
Then, we have 
\begin{equation}\label{TS-key}
 d_{\mathsf{Z}}(a,b) \leq c_1^{\prime} \cdot d_{\mathsf{Z}}(\gamma_1(1), \gamma_2(1)).
\end{equation}
Item (\ref{Ex-3-3}) of Definition~\ref{cftd} holds. 
\end{proof}

Lemma~\ref{tscbt} can be proved in the same manner as Lemma~\ref{tscft}.

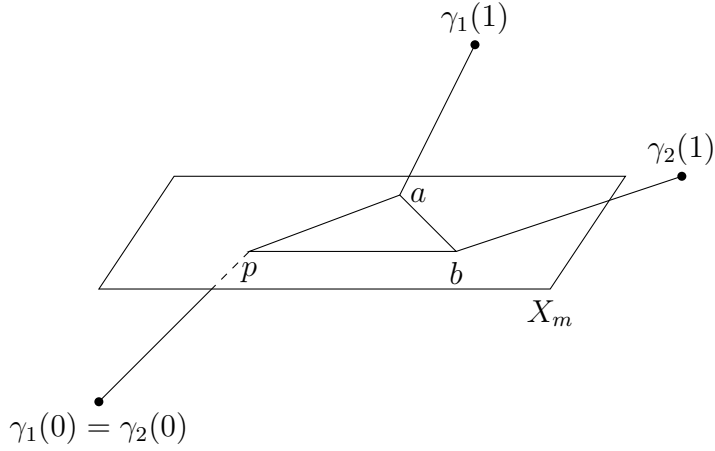
\begin{figure}[h]
\begin{tikzpicture}
\draw(0,0)--++(6,0)--++(-1,-1.5)--++(-6,0)--cycle; 
\draw(5,-1.5)node[below]{$X_m$};
\draw[dashed](1,-1)--++(-0.5,-0.5);
\draw(0.5,-1.5)--++(-1.5,-1.5);
\draw(-1,-3)node[below]{$\gamma_1(0)=\gamma_2(0)$};
\fill(-1,-3)circle(0.06);
\draw(1,-1)--++(2,0.75)--++(0.75,-0.75)--cycle;
\draw(1,-1)node[below]{$p$};
\draw(3.5,-0.25)node[left]{$a$};
\draw(3.75,-1)node[below]{$b$};
\draw(3,-0.25)--++(1,2);
\draw(4,1.75)node[above]{$\gamma_1(1)$};
\fill(4,1.75)circle(0.06);
\draw(3.75,-1)--++(3,1);
\draw(6.75,0)node[above]{$\gamma_2(1)$};
\fill(6.75,0)circle(0.06);
\end{tikzpicture}
\caption{An example of geodesic triangles of $\mathsf{Z}$.}
\label{TS-gt}
\end{figure}

\subsection{Combination of equivariant bicombings}
In this subsection, we prove part \eqref{TS-main-G-equiv} of Theorem \ref{TS-Main_thm}. From now, we suppose $\xi \colon Z \to T$ is a $G$-tree of spaces
and the family of bicombings $\{{\Gamma}_{k}\mid k\in K\}$ is $G$-equivariant. Let $v,w \in Z$. 
By the construction of $\Gamma(v,w)$, 
there exists a sequence of distinct elements $\{v_1,v_2,\dots,v_m\} \subset \xi^{-1}(L)$ such that if we let $v_0=v$ and $v_{m+1}=w$, then 
\begin{itemize}
\item there exists a sequence of distinct elements $\{k_i\}_{i=0}^m \subset K$ such that $v_i, v_{i+1} \in X_{k_i}$,
\item $\Gamma(v,w)$ is obtained by concatenating ${\Gamma}_{k_i}(v_i,v_{i+1})$ at $\{v_i\}_{i=1}^{m-1}$.  
\end{itemize}

As in equality \eqref{eq:GammaUV}, the geodesic segment $\Gamma(v,w)(\cdot) : [0,1] \to \mathsf{Z}$ is represented by
\begin{align*}
\Gamma(v,w)(t)=
{\Gamma}_{k_i}(v_i,v_{i+1})\left( \left( t-\dfrac{t_i}{t_{m+1}}\right ) \cdot \dfrac{t_{m+1}}{t_{i+1}-t_i} \right)\  \text{for}\ \dfrac{t_i}{t_{m+1}} \leq t \leq \dfrac{t_{i+1}}{t_{m+1}}
\end{align*}
where $t_0=0$, and $t_i=\displaystyle\sum_{l=0}^{i-1}d_{\mathsf{Z}}(v_l,v_{l+1})$.

On the other hand, $\Gamma(gv,gw)$ is represented by 
\begin{align*}
\Gamma(gv,gw)(t)=
{\Gamma}_{gk_i}(gv_i,gv_{i+1})\left( \left( t-\dfrac{t_i}{t_{m+1}}\right ) \cdot \dfrac{t_{m+1}}{t_{i+1}-t_i} \right)\ 
\text{for}\ \dfrac{t_i}{t_{m+1}} \leq t \leq \dfrac{t_{i+1}}{t_{m+1}},  
\end{align*}
since $\xi$ is a $G$-map, $L$ is a $G$-invariant subset of $T$, and $ 
t_i=\displaystyle\sum_{l=0}^{i-1}d_{\mathsf{Z}}(gv_l,gv_{l+1})$, where this last equality is a consequence of the $G$-action on $Z$ being by isometries.

Since the family $\{{\Gamma}_{k}\}_{k\in K}$ is $G$-equivariant,
for $\dfrac{t_i}{t_{m+1}} \leq t \leq \dfrac{t_{i+1}}{t_{m+1}}$, 
we have
\begin{align*}
\Gamma(gv,gw)(t)
&={\Gamma}_{gk_i}(gv_i,gv_{i+1})\left( \left( t-\dfrac{t_i}{t_{m+1}}\right ) \cdot \dfrac{t_{m+1}}{t_{i+1}-t_i} \right) \\
&=g \cdot {\Gamma}_{k_i}(v_i,v_{i+1})\left( \left( t-\dfrac{t_i}{t_{m+1}}\right ) \cdot \dfrac{t_{m+1}}{t_{i+1}-t_i} \right) \\
&=g \cdot \Gamma(v,w)(t). 
\end{align*}
Therefore, the geodesic bicombing $\Gamma: \mathsf{Z} \times \mathsf{Z} \times [0,1] \to \mathsf{Z}$ is $G$-equivariant.

\section{Proofs of the main results}

\begin{proof}[Proof of Theorem~\ref{thmx:Io}]
Let $(G_i, \mathcal{H}_i\cup \{K_i\})$ for $i=1,2$ be group pairs, and
$G=G_1\ast G_2$. The arguments for $\mathcal{A}_{wsh}$ and
$\mathcal{A}_{sh}$ are completely analogous, we prove the latter.
Suppose that $(G_i, \mathcal{H}_i\cup \{K_i\})$ is a semi-hyperbolic
group pair for $i=1,2$.  Then the pair $(G_i, \mathcal{H}_i\cup
\{K_i\})$ admits a Cayley-Abels graph $X_i$ with a bounded
$G_i$-equivariant quasi-geodesic bicombing $\hat\Gamma_i$. By definition
of Cayley-Abels graph, there exists a vertex $x_i\in \Gamma_i$ with
$G_i$-stabilizer equal to $K_i$.  Applying
Theorem~\ref{thm:CombinationFine} (or~\cite[Theorem 3.1]{BiMa23}) to
$X_1$, $X_2$ and the vertices $x_1,x_2$, we obtain a Cayley-Abels graph
$Z$ for the pair $(G_1\ast_CG_2, \mathcal{H}\cup\{\langle K_1,K_2
\rangle\})$, and a continuous map $\xi\colon Z \to T$ which is a
$G$-tree of spaces with the property that $\xi^{-1}(\xi(x_i))$ is
isomorphic to $X_i$. It follows that each vertex space of $Z$ admits a
bounded quasi-geodesic bicombing and this family of bicombings is
$G$-equivariant. By Theorem~\ref{TS-Main_thm}, the graph $Z$ admits a
$G$-equivariant bounded quasi-geodesic bicombing. Therefore, the pair
$(G,\mathcal{H}_1\cup\mathcal{H}_2\cup\{\langle K_1,K_2 \rangle\})$ is a
semi-hyperbolic group pair.

Now we prove the statement for $\mathcal{A}_{gcc}$.
Suppose that $(G_i, \mathcal{H}_i\cup \{K_i\})$  is a geodesic coarsely convex group pair for $i=1,2$.
Each group $G_i$ acts geometrically relative to $\mathcal{H}_i\cup \{K_i\}$  on a geodesic coarsely convex space $X_i$ satisfying the conclusion 
of~\cref{prop:extra-condition}. Let $x_i\in X_i$ such that its $G_i$-isotropy is the subgroup $K_i$.
Applying   Theorem~\ref{thm:CombinationFine} to the spaces $X_1,X_2$ and points $x_1,x_2$, 
it follows that  there exists a cocompact geodesic metric $G$-space $Z$ such that for each $H\in \mathcal{H}_1\cup\mathcal{H}_2\cup\{\langle   K_1,K_2  \rangle \}$ the fixed point set $Z^H$ is non-empty and bounded, and for each $y\in Z$ the isotropy $G_z$ is either a finite subgroup or a conjugate of some $H\in \mathcal{H}_1\cup\mathcal{H}_2\cup\{\langle   K_1,K_2  \rangle \}$.
 Moreover, there is a continuous map $\xi\colon Z \to T$ which is a $G$-tree of spaces with the property that $\xi^{-1}(\mathsf{Star}(\xi( x_i)))$ is $\spike(X_i, C, x_i)$. 

 By \cref{remark:conedoff-preserve-gcc},
 each vertex space $\xi^{-1}(\mathsf{Star}(k))$ of $Z$  admits a geodesic coarsely convex bicombing  and this family of bicombings is $G$-equivariant. Then, Theorem~\ref{TS-Main_thm} implies that the geodesic metric $G$-space $Z$ admits a $G$-equivariant geodesic coarsely convex bicombing. This proves that the group pair  $(G,\mathcal{H}_1\cup\mathcal{H}_2\cup\{\langle K_1,K_2 \rangle\})$ is geodesically coarsely convex.
\end{proof}

\begin{proof}[Proof of Theorem~\ref{thmx:IIIo} ]
 The argument is completely analogous to the proof of Theorem~\ref{thmx:Io},
 the main  difference is the use the version of Theorem~\ref{thm:CombinationFine} for HNN-extensions which is Theorem~\ref{thm:HNNFine}. 
\end{proof}







\section*{Acknowledgements}
The first author was supported by JSPS KAKENHI Grant number 24K06741. 
The second author thanks the other two authors for their hospitality while visiting Tokyo Metropolitan University during Spring 2023, where the ideas of this article originated. 
The second author acknowledges funding by the Natural Sciences and Engineering Research Council of Canada NSERC.
The third author was supported by JST, the establishment of university fellowships towards the creation of science technology innovations, Grant number JPMJFS2139.

\bibliographystyle{alpha}
\bibliography{ref}
\end{document}